\documentclass[12pt,a4paper]{article}

\usepackage{amsfonts}
\usepackage{amssymb,amsmath,amsthm}
\usepackage[utf8]{inputenc}
\usepackage[T1]{fontenc}
\usepackage{enumerate}
\usepackage{hyperref}
\usepackage{tikz}

\hypersetup{colorlinks=true, linkcolor=red, anchorcolor=green,
citecolor=cyan, urlcolor=red, filecolor=magenta, pdftoolbar=true}

\newtheorem{theorem}{Theorem}[section]
\newtheorem{lemma}[theorem]{Lemma}
\newtheorem{remark}[theorem]{Remark}
\newtheorem{corollary}[theorem]{Corollary}

\newcommand{\cemph}[1]{\emph{\color{red}#1}}

\newcommand\R{\mathbb{R}}
\newcommand\B{\mathbb{B}}
\newcommand\CV{\mathcal{V}}
\newcommand\ms[1]{\mathcal{M}_s^{#1}}

\DeclareMathOperator{\conv}{conv}
\DeclareMathOperator{\lin}{lin}
\DeclareMathOperator{\inte}{int}
\DeclareMathOperator{\bd}{bd}
\DeclareMathOperator{\vol}{vol}
\DeclareMathOperator{\tr}{Tr}

\newcommand\extrafootertext[1]{%
    \bgroup
    \renewcommand\thefootnote{\fnsymbol{footnote}}%
    \renewcommand\thempfootnote{\fnsymbol{mpfootnote}}%
    \footnotetext[0]{#1}%
    \egroup
}
\providecommand{\keywords}[1]{%
\extrafootertext{\textit{Key words and phrases.} #1.}
}
\providecommand{\subjclass}[1]{%
\extrafootertext{2020 \textit{Mathematics Subject Classification.} #1.}
}

\begin{document}

\title{John-type decompositions for affinely-optimal positions of convex bodies}
\author{Florian Grundbacher \MakeLowercase{and} Tomasz Kobos}
\subjclass{Primary 52A40; Secondary 46B07, 46B20, 52A21}
\keywords{Banach--Mazur distance, Affine-optimal approximation, Ader decomposition, John decomposition, Mean ellipsoids}

\maketitle

\begin{abstract}
Many classical problems in convex geometry can be cast as optimization problems under certain containment conditions. The arguably best-understood example is volume-maximization of convex bodies contained in other convex bodies, where the John decomposition describes---and in the Euclidean case fully characterizes---the optimal positions. For many other such problems, however, no general optimality conditions are known. To address this, we generalize an approach of O.~B.~Ader to obtain a John-type decomposition as a necessary condition for affinely-optimal containment chains, i.e., chains $r L_1 + c \subseteq K \subseteq R L_2 + d$ for convex bodies $K, L_1, L_2 \subseteq \R^n$, translation vectors $c,d \in \R^n$, and reals $r,R > 0$ such that the ratio $\frac{R}{r}$ cannot be decreased by linearly transforming $K$. We again obtain sufficiency for optimality when ellipsoids are involved, and show how optimality conditions for various problems follow from our result. Our main applications concern the Banach--Mazur distance, where we provide necessary optimality conditions in the general case and a full characterization in the Euclidean case. Finally, we derive several consequences of these optimality conditions related to the Banach--Mazur distance to the Euclidean ball.
\end{abstract}

\section{Introduction and Results}

Various aspects of the geometry of convex bodies in $\R^n$ are related to problems of
best possible approximation of a given convex body by some others.
These problems can be considered for various classes of convex bodies,
as well as for vastly different properties toward which the approximations should be optimized.
One of the most well-known examples is given by the approximation of convex bodies by volume-maximal inscribed ellipsoids.
In this case, the class of convex bodies we optimize over is formed by ellipsoids,
and the task is to maximize the volume under a containment constraint.
The main goal of this paper is to provide a general tool
that allows the analysis of the optimal solutions for various approximation problems in convexity,
with a particular focus on applications for the Banach--Mazur distance.

The main inspiration for the type of tool we aim to develop lies in the following famous result due to John \cite{john}.
It concerns the above problem about volume-extremal approximation by ellipsoids
and characterizes when the \cemph{Euclidean unit ball} $\B^n$ is the optimal solution
based on contact points with the \cemph{boundary} $\bd(K)$ of a \cemph{convex body} $K \subseteq \R^n$
(i.e., a compact convex set with non-empty interior).
By $\langle \cdot, \cdot \rangle$ we denote the \cemph{standard inner product} on $\R^n$.

\begin{theorem}[John Ellipsoid Theorem]
\label{thm:john}
Let $K \subseteq \R^n$ be a convex body such that $\B^n \subseteq K$.
Then $\B^n$ is the unique ellipsoid of maximal volume contained in $K$
if and only if there exist contact points $u^1, \ldots, u^N \in \bd(K) \cap \bd(\B^n)$ as well as weights $\lambda_1, \ldots, \lambda_N>0$
such that for any $x \in \R^n$,
\[
	\sum_{i=1}^{N} \lambda_i \langle x, u^i \rangle u^i = x
		\quad \text{ and } \quad
	\sum_{i=1}^{N} \lambda_i u^i = 0.
\]
In this case, $\sum_{i=1}^N \lambda_i = n$ and there exists a choice with $N \leq \frac{n(n+3)}{2}$.
\end{theorem}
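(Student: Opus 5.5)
We prove both directions with a standard first-order (Lagrange/separation) argument on the space $\R^{n\times n}_{\mathrm{sym}} \times \R^n$ of affine perturbations, together with the concavity of $\log\det$ on positive definite matrices. Every ellipsoid can be written as $E = A\B^n + c$ with $A$ symmetric positive definite, and $\vol(E) = \det(A)\vol(\B^n)$. The containment $E \subseteq K$ is equivalent to $h_E(y) = |Ay| + \langle c, y\rangle \le h_K(y)$ for all $y$. We will use the embedding $u \mapsto (u u^T, u) \in \R^{n\times n}_{\mathrm{sym}}\times\R^n$, a space of dimension $\frac{n(n+1)}{2}+n = \frac{n(n+3)}{2}$.

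\textbf{Sufficiency.} Assume the decomposition holds and let $E = A\B^n + c \subseteq K$. For each contact point $u^i$, since $u^i \in \bd(K)\cap\bd(\B^n)$ and $\B^n \subseteq K$, the vector $u^i$ is an outer normal of $K$ at $u^i$, so $h_K(u^i) = 1$. Containment then gives
\[
\langle Au^i,u^i\rangle + \langle c,u^i\rangle \le |Au^i| + \langle c,u^i\rangle \le 1 .
\]
Multiply by $\lambda_i$ and sum. The second identity kills the $c$-terms. Taking the trace of the first identity gives $\sum\lambda_i = n$, and the first identity also gives $\sum \lambda_i \langle Au^i,u^i\rangle = \tr(A)$. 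Hence $\tr(A) \le n$. By AM--GM on the eigenvalues, $\det A \le (\tr A/n)^n \le 1$, so $\vol(E)\le\vol(\B^n)$.

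For uniqueness, equality forces $A = I$. Then every inequality above is tight, so $\langle c,u^i\rangle = 0$ for all $i$. Thus $\B^n + c \subseteq K$ with $c \perp u^i$, and since $h_K(u^i)=1$ we need $1 + \langle c, u^i\rangle \le 1$ at every point of $\B^n + c$ in direction $u^i$, which is consistent. Uniqueness needs a sharper argument. Since the $u^i$ span $\R^n$ (because $\sum \lambda_i u^i u^{iT} = I$), we use strict concavity instead: if $E \ne \B^n$ had equal volume, the ellipsoid $\frac12(E+\B^n)$, with matrix $\frac{A+I}{2}$ and centre $\frac c2$, is contained in $K$ by convexity. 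If $A \ne I$, it has strictly larger volume by strict concavity of $\log\det$, contradicting the bound just proved. So $A=I$. Then $\B^n \pm c$-type averaging shows that $\B^n + tc \subseteq K$ for $t\in[0,1]$. Each $u^i$ is a contact point of this whole family, and for $c\neq 0$ one extracts a point of $\B^n+c$ outside $K$ in some direction $u^i$ with $\langle c,u^i\rangle>0$ after perturbing. We expect to tidy this last step by applying the inequality to $E' = (1-\varepsilon)\B^n + c'$ for small $\varepsilon$ and suitable $c'$.

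\textbf{Necessity.} Assume $\B^n$ is volume-maximal. Let $C = \{(uu^T,u) : u \in \bd(K)\cap\bd(\B^n)\}$, a compact set, and consider the point $(\frac1n I, 0)$. We claim that $(\frac{1}{n}I,0) \in \conv C$. If not, strict separation gives $(H,v)$ with
\[
\langle Hu,u\rangle + \langle v,u\rangle < \tfrac1n\tr H \quad \text{for all contact points } u .
\]
Replacing $H$ by $H - \frac{\tr H}{n} I$, we may assume $\tr H = 0$, so the left-hand side is negative on contact points. Consider $E_t = (I+tH)\B^n + tv$. First-order expansion gives $h_{E_t}(u) = 1 + t(\langle Hu,u\rangle+\langle v,u\rangle) + O(t^2)$ uniformly in $u\in S^{n-1}$. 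This is $<1 \le h_K(u)$ near the contact set, while away from it $h_K > 1$ by compactness. Hence $E_{t}$ with a factor $(1+\delta t)$ slightly enlarged still lies in $K$ for small $t>0$. Its volume is $\det(I+tH)(1+\delta t)^n = 1 + n\delta t + O(t^2) > 1$, which is a contradiction.

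\textbf{Conclusion.} By Carath\'eodory in the affine space of dimension $\frac{n(n+3)}{2}$, we can write $(\frac1n I, 0) = \sum \mu_i (u^iu^{iT},u^i)$ with at most $\frac{n(n+3)}{2}+1$ points. Setting $\lambda_i = n\mu_i$ gives both identities. Since these points lie in the hyperplane $\{\tr M = 1\}$, the relevant affine dimension drops by one, which yields $N \le \frac{n(n+3)}{2}$. Taking the trace again gives $\sum\lambda_i = n$.

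\textbf{Main obstacle.} The main difficulty is the uniform compactness argument showing $E_t\subseteq K$ for small $t$ in the necessity direction. Handling points $u$ near the contact set needs the uniform first-order bound, and this requires care since $h_K$ is only continuous.
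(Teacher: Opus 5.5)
The paper does not prove this theorem; it quotes it from John. So I am judging your argument on its own. The volume bound and the necessity direction are correct, but the uniqueness half of the sufficiency direction has a real gap.

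\textbf{The gap.} You leave the uniqueness step unfinished (``we expect to tidy this last step''), and the sketch you give there cannot work. At that point you already know two things:
\begin{enumerate}[(a)]
\item $A=I_n$, since equality in $\det A\le(\tr A/n)^n\le 1$ forces it (so the strict-concavity detour is redundant);
\item $\langle c,u^i\rangle=0$ for every $i$.
\end{enumerate}
Hence there is no $u^i$ with $\langle c,u^i\rangle>0$. Moreover, $\B^n+c=E\subseteq K$ is your standing hypothesis on $E$, so no point of $\B^n+c$ lies outside $K$, and no perturbation will produce one. The missing step is a single line for which you already had every ingredient. Put $x=c$ in the first identity to get $c=\sum_{i=1}^N\lambda_i\langle c,u^i\rangle u^i=0$, so $E=\B^n$. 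You even noted that the $u^i$ span $\R^n$ but did not draw this conclusion.

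\textbf{What is sound.} The volume bound via $\sum_i\lambda_i\langle Au^i,u^i\rangle=\tr(A)$ and AM--GM is correct. So is the strict separation of $(\frac1n I_n,0)$ from $\conv C$ with the normalization $\tr H=0$, and so is the Carath\'eodory count in the trace-one hyperplane. The ``main obstacle'' you flag is routine:
\begin{enumerate}[(1)]
\item choose a neighbourhood $W$ of the contact set in the unit sphere on which $\langle Hu,u\rangle+\langle v,u\rangle\le-\eta$;
\item take $\delta<\eta$;
\item use $h_K\ge 1+\kappa$ on the compact complement of $W$.
\end{enumerate}
Continuity of $h_K$ is all this needs.

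\textbf{Relation to the paper.} The closest the paper comes to this theorem is the corollary in Section~\ref{sec:decompositions}. It recovers the necessity direction for a smooth inner body such as $\B^n$. It does so by showing that $S\subseteq\B^n\subseteq K$, with $S$ a volume-maximal simplex, is affinely-optimal, and then applying Theorem~\ref{thm:generaldecomp} and Lemma~\ref{lem:aff_decomp}. That route only gives $N\le n^2+n$ and says nothing about sufficiency. Your direct perturbation-and-separation argument is the same mechanism as the proof of Theorem~\ref{thm:generaldecomp}, specialized to ellipsoids. It yields the sharp count $\frac{n(n+3)}{2}$.
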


The equality $x = \sum_{i=1}^{N} \lambda_i \langle x, u^i \rangle u^i$ is often referred to as a \cemph{John decomposition}.
It is the only condition needed, meaning that $\sum_{i=1}^{N} \lambda_i u^i = 0$ is redundant,
when the convex body $K$ is \cemph{origin-symmetric}, i.e., $-K := \{ -x : x \in K \} = K$.

The John Ellipsoid Theorem has many far-reaching consequences.
Probably its most well-known corollary is that
any convex body $K$ is contained in a translated copy of its volume-maximal inscribed ellipsoid $\mathcal{E}$
when the ellipsoid is \cemph{dilated} by a factor $\rho = n$ to the set $\rho \mathcal{E} := \{ \rho x : x \in \mathcal{E} \}$.
This factor can be decreased to $\rho = \sqrt{n}$ when $K$ is \cemph{centrally-symmetric},
i.e., some \cemph{translate} $K+t := \{ x+t : x \in K \}$ with $t \in \R^n$ is origin-symmetric.
Remarkably, despite the John Ellipsoid Theorem characterizing the optimal solution to a volume-extremal approximation problem,
it also provides strong information about a different kind of approximation problem, the approximation by homothetic copies.

The latter kind of approximation constitutes another classical area of research in convex geometry
and is usually expressed using the notion of the \cemph{Banach--Mazur distance}.
For two convex bodies $K, L \subseteq \R^n$ it is defined as
\begin{equation}
\label{eq:def_dBM}
	d_{BM}(K,L) := \inf \{\rho \geq 0 : K+ u \subseteq T(L+v) \subseteq \rho (K+u) \},
\end{equation}
where the infimum runs over all invertible linear operators $T$ on $\R^n$ and all translation vectors $u,v \in \R^n$.
In this terminology, the previously mentioned corollaries of the John Ellipsoid Theorem state that
$d_{BM}(K, \B^n) \leq n$ for an arbitrary convex body $K \subseteq \R^n$ and $d_{BM}(K, \B^n) \leq \sqrt{n}$ for a centrally-symmetric $K$. 

Historically, the Banach--Mazur distance has been studied most extensively in the setting of normed spaces in the context of functional analysis.
For normed spaces $X =  (\R^n,\|\cdot\|_X)$, $Y = (\R^n, \| \cdot \|_Y)$,
their Banach--Mazur distance is the smallest constant $\rho \geq 0$ achievable in a comparison of norms
\[
    \|x\|_X
    \leq \|x\|_{Y'}
    \leq \rho \|x\|_X
    \quad \text{for all } x \in \R^n,
\]
where $Y' = (\R^n, \| \cdot \|_{Y'})$ is any normed space isometric to $Y$.
In other words, the Banach--Mazur distance quantifies how close two normed spaces are to being isometric.
The definition \eqref{eq:def_dBM} of the Banach--Mazur distance between convex bodies constitutes a generalization of this concept.
Indeed, the Banach--Mazur distance between two normed spaces coincides with the Banach--Mazur distance between their origin-symmetric unit balls,
in which case the translation vectors $u, v$ can be omitted in \eqref{eq:def_dBM} (i.e., $u=v=0$).
We shall therefore stay with the language of convex bodies throughout the paper.
For a general overview of the Banach--Mazur distance, its importance for many classical problems in functional analysis,
and its connections to other aspects of the geometry of Banach spaces,
the reader is referred to the classical monograph \cite{tomczak} of Tomczak-Jaegermann.

Despite the theoretical importance of the Banach--Mazur distance, the optimal configurations for the distance between arbitrary pairs of convex bodies are hardly understood. Instead, many results about the Banach--Mazur distance are obtained indirectly by studying optimal configurations for other problems, as can be seen from the above estimates on the distance to the Euclidean ball or the recently established near-optimal bound on the maximal possible Banach--Mazur distance between general convex bodies (see \cite{bizeulklartag}). While this approach is often sufficient in the asymptotic setting, it commonly does not provide much for determining exact Banach--Mazur distances in concrete examples. This remains true even for simple and familiar convex bodies, like the standard cube and the cross-polytope in $\R^3$, whose precise distance has been established only recently (see \cite{kobosvarivoda}). As initially indicated, we aim to develop a better understanding of the optimal configurations for the Banach--Mazur distance itself, while providing a more general tool that can be applied to other problems as well.

Our starting point is marked by a paper of Ader \cite{ader} from $1938$,
whose importance appears to be largely unnoticed.
Perhaps somewhat surprisingly, Ader gave a full characterization of the ellipsoids achieving the Banach--Mazur distance
to any given centrally-symmetric convex body in $\R^3$ (condition (ii) below)
around $10$ years before John's seminal work \cite{john}.
Ader based his characterization on an earlier result due to Behrend \cite{behrend},
who handled the case of $\R^2$ in a different, less directly generalizable way.
According to the historic record \cite{kuhn}, John's work was inspired by that of his student Ader,
though it was later overlooked that Ader himself did not work with volume-extremal ellipsoids.
The authors of the present paper showed in \cite{grundbacherkobos} that revisiting Ader's result with a modern point of view
leads to a characterization of ellipsoids giving the Banach--Mazur distance to centrally-symmetric convex bodies in any dimension
akin to the John Ellipsoid Theorem,
which we state in the following.
By $\ms{n}$ we denote the $\frac{n(n+1)}{2}$-dimensional linear space of \cemph{real symmetric $n \times n$ matrices}.

\begin{theorem}
\label{thm:ader_cond}
Let $K \subseteq \R^n$ be an origin-symmetric convex body and let $R \geq r > 0$ be reals such that $r \B^n \subseteq K \subseteq R \B^n$. Then the following conditions are equivalent:
\begin{enumerate}[(i)]
\item $d_{BM}(K,\B^n)=\frac{R}{r}$.
\item For any matrix $A \in \ms{n}$,
	there exist an inner contact point $y \in \bd(K) \cap \bd(R \B^n)$
	and an outer contact point $z \in \bd(K) \cap \bd(r \B^n)$
	such that
\[
	\left\langle \frac{y}{r}, A \left( \frac{y}{r} \right) \right\rangle \leq \left\langle \frac{z}{R}, A \left( \frac{z}{R} \right) \right\rangle.
\]
\item There exist integers $N, M \geq 1$,
	inner contact points $y^1, \ldots, y^N \in \bd(K) \cap \bd(R\B^n)$,
	outer contact points $z^1, \ldots, z^M \in \bd(K) \cap \bd(r\B^n)$,
	as well as weights $\lambda_1, \ldots, \lambda_N$, $\mu_1, \ldots, \mu_M > 0$
	such that for any $x \in \R^n$,
    
\[
	\sum_{i=1}^N \lambda_i \langle x, y^i \rangle y^i = \sum_{j=1}^M \mu_j \langle x, z^j \rangle z^j.
\]
\end{enumerate}
In this case, $r^2 \sum_{i=1}^N \lambda_i = R^2 \sum_{j=1}^M \mu_j$ and there exists a choice with $N+M \leq \frac{n(n+1)}{2} + 1$.
\end{theorem}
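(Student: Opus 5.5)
I would prove the cycle (iii)$\Rightarrow$(i)$\Rightarrow$(ii)$\Rightarrow$(iii), identifying each rank-one form $x\mapsto\langle x,y\rangle y$ with the matrix $yy^T\in\ms{n}$. Then (iii) is the matrix identity $\sum_i\lambda_i y^i(y^i)^T=\sum_j\mu_j z^j(z^j)^T$. Because $K$ is origin-symmetric and any invertible $T$ has a polar decomposition $T=UP$ with $U$ orthogonal, translations can be dropped and only positive definite $T$ (i.e.\ symmetric matrices) matter in computing $d_{BM}(K,\B^n)$.

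\textbf{(iii)$\Rightarrow$(i).} Suppose $r'\B^n\subseteq TK\subseteq R'\B^n$ for some invertible $T$; we show $R'/r'\ge R/r$. Set $S=T^TT$ and pair the identity in (iii) with $S$ and with $I$ via the trace. The outer contact points satisfy $|Ty^i|\le R'$. For an inner contact point $z$, its outer normal is $z/r$, so $h_K(z)=r^2$. Then $h_{TK}(T^{-T}z)=h_K(z)=r^2\ge r'|T^{-T}z|$. By Cauchy--Schwarz, $r^4=|z|^4=\langle Tz,T^{-T}z\rangle^2\le|Tz|^2|T^{-T}z|^2$, which yields $|Tz|\ge r'$. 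Pairing with $S$ gives $R'^2\sum\lambda_i\ge\sum\lambda_i|Ty^i|^2=\sum\mu_j|Tz^j|^2\ge r'^2\sum\mu_j$. Pairing with $I$ gives the weight relation $R^2\sum\lambda_i=r^2\sum\mu_j$. Combining the two yields $R'/r'\ge R/r$. The weight relation is also the final claim of the theorem, modulo the paper's normalization of the contact points, which I would check against the scaling in (ii).

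\textbf{(i)$\Rightarrow$(ii).} I argue by contraposition with a first-order perturbation $T_t=e^{tA}$, $A\in\ms{n}$. The circumradius satisfies $R(T_tK)^2=R^2+2t\max_y\langle y,Ay\rangle+o(t)$, the maximum taken over outer contact points. This needs a standard compactness/Danskin argument: near-maximizers of $|T_tx|$ on $K$ converge to contact points. For the inradius I use duality, $r(T_tK)=1/R((T_tK)^\circ)$ with $(T_tK)^\circ=e^{-tA}K^\circ$. The contact points of $K^\circ$ with $r^{-1}\B^n$ are $z/r^2$, which gives $r(T_tK)^{-2}=r^{-2}-2t\min_z\langle z,Az\rangle/r^4+o(t)$. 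Hence the squared ratio equals
\[
\frac{R^2}{r^2}\Bigl(1+2t\Bigl[\tfrac{\max_y\langle y,Ay\rangle}{R^2}-\tfrac{\min_z\langle z,Az\rangle}{r^2}\Bigr]\Bigr)+o(t).
\]
If (ii) fails for some $A$, the bracket is negative after matching normalizations, so a small $t>0$ strictly decreases the ratio and contradicts (i).

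\textbf{(ii)$\Rightarrow$(iii), and the bound on $N+M$.} Consider the compact convex sets $C_1=\conv\{yy^T/c_1\}$ and $C_2=\conv\{zz^T/c_2\}$ in $\ms{n}$, with scalings $c_1,c_2$ chosen to match (ii). If they were disjoint, strict separation by the trace inner product would give some $A$ with $\langle y,Ay\rangle/c_1>\langle z,Az\rangle/c_2$ for all contact points, which contradicts (ii). So $C_1\cap C_2\neq\emptyset$, which is exactly (iii). For the bound, the common point lies in $C_1\cap C_2$. Applying Carathéodory in the $\frac{n(n+1)}{2}$-dimensional space $\ms{n}$ to the combined linear dependence reduces it to at most $\frac{n(n+1)}{2}+1$ terms with positive coefficients on each side.

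The main obstacle is (i)$\Rightarrow$(ii): the one-sided derivative of the inradius must be made rigorous, and I handle it through polarity as above. The other delicate point is bookkeeping the factors $R/r$ versus $r/R$ consistently, so that the separating functional and the trace identity match the exact form of (ii) and of the weight relation.
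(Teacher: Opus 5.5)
Your argument is sound, but it proves the statement with the two contact sets the other way round. You should say so explicitly rather than defer to ``matching normalizations''.

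As printed, $y$ lies on $\bd(R\B^n)$ and $z$ on $\bd(r\B^n)$. Then for $A=I_n$ the two sides of (ii) are $R^2/r^2$ and $r^2/R^2$, so (ii) fails whenever $R>r$. The square $[-1,1]^2$ with $r=1$, $R=\sqrt2$ satisfies (i) and is such a case. In that labeling the trace relation is also $R^2\sum\lambda_i=r^2\sum\mu_j$, which is exactly what your (iii)$\Rightarrow$(i) computation produced. So no rescaling rescues (i)$\Rightarrow$(ii) for the printed inequality.

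What your first-order expansion actually yields is $\min_y\langle y/r,A(y/r)\rangle\le\max_z\langle z/R,A(z/R)\rangle$ with $y\in\bd(K)\cap\bd(r\B^n)$ and $z\in\bd(K)\cap\bd(R\B^n)$. This is the intended condition (compare Theorem~\ref{thm:euclidean} with $c=d=0$), and it matches the stated weight relation $r^2\sum\lambda_i=R^2\sum\mu_j$. With this labeling your separation step uses $C_1=\conv\{yy^T/r^2\}$ and $C_2=\conv\{zz^T/R^2\}$. Condition (iii) is symmetric in its two sides and is unaffected.

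With that correction, your route differs from the paper's. The paper does not reprove this theorem. It obtains it along the lines of Theorems~\ref{thm:generaldecomp} and~\ref{thm:euclidean2}:
\begin{itemize}
\item \emph{Necessity:} perturb with $I_n+\varepsilon A$, choose contact points and support functionals of the perturbed chain, pass to limits by compactness, then separate.
\item \emph{Sufficiency:} argue contrapositively, (ii)$\Rightarrow$(i), by producing from a better ellipsoid an explicit matrix that violates (ii). In the non-symmetric case this is where the mean ellipsoids of Theorem~\ref{thm:general_mean_ellipsoid} are needed to handle translations.
\end{itemize}

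Your (iii)$\Rightarrow$(i) is the dual, certificate form of that step. The pointwise bounds behind your pairing, $|Ty|\ge r'$ on inner and $|Tz|\le R'$ on outer contact points, show directly that $A=T^TT$ violates (ii) whenever $R'/r'<R/r$. Your version is shorter. The Cauchy--Schwarz detour is unnecessary, since $|Tz|\ge r'$ follows at once from $Tz\in\bd(TK)$ and $r'\B^n\subseteq TK$.

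Your (i)$\Rightarrow$(ii) via Danskin and $(e^{tA}K)^\circ=e^{-tA}K^\circ$ gives a clean exact first-order formula for the ratio. However, it needs an envelope theorem and uses symmetric $A$ and centered balls, hence origin-symmetry, in an essential way. The paper's limiting argument is elementary and works for arbitrary $K,L_1,L_2$ with translations.

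Two small points to add. First, condition (i) forces $r$ and $R$ to be the centered inradius and circumradius, so the contact sets are nonempty and your expansions are taken at the right radii. Second, the Carath\'eodory reduction bounds the total $N+M$. Both sides survive the reduction because a vanishing positive combination of nonzero positive semidefinite matrices is impossible (take traces).
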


To honor the work of Ader, we shall call a decomposition like in (iii) an \cemph{Ader decomposition}.
It can be expressed equivalently in the language of matrices as
\[
    \sum_{i=1}^{N} \lambda_i y^i(y^i)^T
    = \sum_{j=1}^{M} \mu_i z^j(z^j)^T.
\]
We refer the reader to \cite{grundbacherkobos} for the proof,
where the history related to this characterization is described in full detail and some of its consequences are presented.
This includes an alternative proof of the upper bound $d_{BM}(K, \B^n) \leq \sqrt{n}$ \cite[Corollary~$2.2$]{grundbacherkobos},
its equality case for $n \leq 3$ \cite[Theorem~$2.4$]{grundbacherkobos},
and a result attributed to Maurey about uniqueness of distance ellipsoids \cite[Theorem~$2.6$]{grundbacherkobos}.
The latter two results were known to experts,
but proofs for neither were ever published prior to \cite{grundbacherkobos}.
Our secondary goal is to extend these results to the non-symmetric case based on a generalization of the above theorem.

Given the direct parallels between the Ader and John decompositions in the symmetric case,
it appears natural to take inspiration for possible generalizations again from the results about volume-extremal approximation.
For the latter, multiple results extending the necessity of John's conditions have been obtained,
with the strongest given in \cite[Theorem~$3.5$]{glmp} by Gordon, Litvak, Meyer, and Pajor.
For its statement and throughout the paper, a \cemph{contact pair} $(y,a)$ of convex bodies $K \subseteq L$
consists of a contact point $y \in \bd(K) \cap \bd(L)$ and a non-zero $a \in \R^n$
such that $\langle a, y \rangle \geq \langle a, x \rangle$ for any $x \in L$.
Thus, if $(y, a)$ is a contact pair, then so is $(y, \rho a)$ for any $\rho > 0$.

\begin{theorem}[John decomposition in the general case]
\label{thm:glmp}
Let $K, L \subseteq \R^n$ be convex bodies such that $K \subseteq L$ and
$K$ has maximal volume among all its affine transformations contained in $L$.
Then there exist contact pairs $(y^1, a^1), \ldots, (y^N, a^N)$ of $K$ and $L$ as well as weights $\lambda_1, \ldots, \lambda_N>0$
such that for any $x \in \R^n$,
\[
	\sum_{i=1}^{N} \lambda_i \langle x, a^i \rangle y^i = x
		\quad \text{ and } \quad
	\sum_{i=1}^{N} \lambda_i a^i = 0.
\]
In this case, $\sum_{i=1}^N \lambda_i \langle y^i, a^i \rangle = n$ and there exists a choice with $N \leq n (n+1)$.
\end{theorem}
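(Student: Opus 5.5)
We prove Theorem~\ref{thm:glmp} by the standard separation argument in the space of affine maps. Identify affine maps $x \mapsto Tx + t$ with pairs $(T,t) \in \R^{n\times n}\times\R^n$, a space of dimension $n(n+1)$, with the inner product $\langle (A,a),(B,b)\rangle = \tr(A^TB) + \langle a,b\rangle$. For each contact pair $(y,a)$ of $K \subseteq L$, consider the point
\[
	P(y,a) := \bigl( a\, y^T,\ a \bigr).
\]
Pairing $(H,h)$ with it gives $\langle a, Hy + h\rangle$. Normalize the contact pairs so that $\langle a, y \rangle = 1$; this is possible because $0$ may be assumed in $\inte(K)$ after a translation, which does not affect the statement. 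Let $\mathcal{C}$ be the set of all such normalized points $P(y,a)$. It is compact, since the normalized contact pairs form a closed, bounded set once $0 \in \inte(K)$. The claim is exactly that $(I/n, 0)$ lies in $\conv(\mathcal{C})$, up to the scaling of the weights. Indeed, if $\sum_i \lambda_i P(y^i,a^i) = (I,0)$, then the first coordinate gives $\sum_i \lambda_i a^i (y^i)^T = I$. Applying this to $x$ and taking the transpose yields $\sum_i \lambda_i \langle x,a^i\rangle y^i = x$. The second coordinate gives $\sum_i\lambda_i a^i=0$. Taking traces gives $\sum_i \lambda_i\langle y^i,a^i\rangle = n$. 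Once membership in the convex hull is known, Carathéodory in dimension $n(n+1)$ yields at most $n(n+1)+1$ points. The trace condition confines the relevant points to the affine hyperplane $\{\tr(A)=1\}$, which drops one dimension, so $N\le n(n+1)$ points suffice.

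Suppose, for contradiction, that $(I,0)$ is not in the convex cone generated by $\mathcal{C}$. This cone is closed, because $\mathcal{C}$ is compact and avoids $0$. By separation there is $(H,h)$ with
\[
	\langle a, Hy+h\rangle < 0 \text{ for all normalized contact pairs } (y,a), \qquad \tr(H) \ge 0 .
\]
Here the second condition is the pairing of $(H,h)$ with $(I,0)$, and the inequalities follow after flipping the sign of $(H,h)$. Replacing $H$ by $H-\delta I$ for a tiny $\delta>0$ keeps the strict inequality, since $\langle a,y\rangle=1$ and compactness leave room. In exchange it makes the trace condition read $\tr(H) > -\delta n$. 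This weakening is harmless: a slightly smaller $\delta$ can be chosen in the final step so that the volume still increases. The cleanest formulation is to separate strictly with $\tr(H)>0$ and $\langle a,Hy+h\rangle<0$ on $\mathcal{C}$.

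Now perturb, setting $K_\varepsilon := (I+\varepsilon H)K + \varepsilon h$. Then
\[
	\vol(K_\varepsilon) = \det(I+\varepsilon H)\vol(K) = \bigl(1+\varepsilon\tr(H)+O(\varepsilon^2)\bigr)\vol(K) > \vol(K)
\]
for small $\varepsilon>0$. The main obstacle is showing that $K_\varepsilon \subseteq L$ for all small $\varepsilon$. I would argue by contradiction, which requires a compactness argument. Suppose there are $\varepsilon_k\to0$ and points $x_k\in K$ with $x_k+\varepsilon_k(Hx_k+h)\notin L$. Separate each such point from $L$ with a unit normal $a_k$, so that $\langle a_k, x_k + \varepsilon_k(Hx_k+h)\rangle > h_L(a_k) \ge \langle a_k,x_k\rangle$. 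Passing to a subsequence gives $x_k\to y$ and $a_k\to a$. The inequalities force $y \in \bd(K) \cap \bd(L)$ with $h_L(a)=\langle a,y\rangle$, so $(y,a)$ is a contact pair. Moreover, $h_L(a_k)-\langle a_k,x_k\rangle < \varepsilon_k\langle a_k,Hx_k+h\rangle$. Since the left side is nonnegative, the limit gives $\langle a,Hy+h\rangle\ge 0$. After normalizing $a$ so that $\langle a,y\rangle = 1$, this contradicts the strict negativity of $\langle a, Hy+h\rangle$ on $\mathcal{C}$.

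Hence $K_\varepsilon\subseteq L$ is an affine image of $K$ with larger volume, contradicting maximality. This establishes the decomposition. Compactness of $\mathcal{C}$ and strictness of the separation are the only delicate points, and both follow from $0\in\inte(K)$, which gives $\langle a,y\rangle = h_L(a) \ge c|a|$ for some $c > 0$.
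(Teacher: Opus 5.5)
Your argument is correct, but it does not follow anything in the paper, because the paper never proves Theorem~\ref{thm:glmp}; it quotes it from \cite{glmp}. The closest it comes is the second corollary of Section~\ref{sec:decompositions}, which recovers the decomposition only under the extra assumption that $K$ is smooth. That argument runs as follows.
\begin{enumerate}[(1)]
\item Take a volume-maximal simplex $S \subseteq K$ and use determinant comparisons to show that the chain $S \subseteq K \subseteq L$ is affinely-optimal.
\item Apply Theorem~\ref{thm:generaldecomp} to this chain.
\item Use smoothness of $K$ to force every contact pair of $S$ and $K$ to have the form $(u^i,\rho u^i)$, where the $u^i$ are the vertices of a regular simplex inscribed in $\B^n$. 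The $S$-side of the Ader decomposition then equals $I_n$ after normalizing the weights.
\item Transport the decomposition back with Lemma~\ref{lem:aff_decomp}.
\end{enumerate}
The count $N \le n^2+n$ there comes from $N+(n+1)\le (n+1)^2$. The point of that route is conceptual: it shows volume maximization as a special case of affine optimality, and it pays for this with the regularity hypothesis.

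Your direct perturbation argument needs no regularity. It gets $N\le n(n+1)$ from Carath\'eodory on the trace-one hyperplane. Structurally it is the volume analogue of the paper's proof of Theorem~\ref{thm:generaldecomp}: perturb by $I+\varepsilon H$ and $\varepsilon h$, extract a limiting contact pair by compactness, and separate in $\R^{n\times(n+1)}$. Because you have only one containment, ordinary cone separation replaces Lemma~\ref{lem:separation}.

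Two small points need tidying; neither is a gap.
\begin{enumerate}[(1)]
\item The conic hull of a compact set avoiding $0$ need not be closed in general. What you actually need, and have, is $0\notin\conv(\mathcal{C})$. This holds because every point of $\mathcal{C}$ has first component of trace $\langle a,y\rangle=1$.
\item Your $\delta$-paragraph runs backwards. Separating $(I,0)$ from the closed cone gives $\tr(H)>0$ and only $\langle a,Hy+h\rangle\le 0$ on $\mathcal{C}$. Replacing $H$ by $H-\delta I$ with $0<\delta<\tr(H)/n$ then lowers each value on $\mathcal{C}$ by $\delta\langle a,y\rangle=\delta$, making the inequality strict while keeping the trace positive. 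That strictness is exactly what your limiting argument needs for the contradiction.
\end{enumerate}
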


If the convex bodies $K$ and $L$ are allowed to be appropriately translated,
one can add the conditions $\langle y^i, a^i \rangle = 1$ for all $i = 1, \ldots, N$
and $\sum_{i=1}^N \lambda_i y^i = 0$ to the above (see \cite[Theorem~$3.8$]{glmp}).
In contrast to the John Ellipsoid Theorem, these conditions no longer guarantee optimality in general (see \cite[Example~$5.7$]{glmp}),
but are only necessary.
Nevertheless, they were key in confirming an old conjecture of Gr\"{u}nbaum on the maximal possible value of the so-called
\cemph{Gr\"{u}nbaum distance}.
It is a variant of the Banach--Mazur distance allowing negative homothets and is for convex bodies $K, L \subseteq \R^n$ defined as
\[
    d_{G}(K,L)
    = \inf \{|\rho| : K+ u \subseteq T(L+v) \subseteq \rho (K+u) \},
\]
where the infimum again runs over all invertible linear operators $T$ on $\R^n$ and translation vectors $u,v \in \R^n$.
Note that the Gr\"{u}nbaum distance coincides with the Banach--Mazur distance when at least one of the convex bodies is centrally-symmetric.
Using \cite[Theorem~$3.8$]{glmp}, Gordon, Litvak, Meyer, and Pajor proved that $d_G(K,L) \leq n$ for all convex bodies $K, L \subseteq \R^n$.
It is conjectured that this inequality becomes an equality only when one of $K$ and $L$ is a simplex
\cite{jimeneznaszodi},
though only special cases have been verified so far \cite{jimeneznaszodi,kobos}.
This is another instance where optimality conditions for the actual problem under consideration
might be a better fit for obtaining a clear and complete solution.

We achieve our main goal of the paper---to provide optimality conditions for various approximation problems, including those discussed so far---by vastly generalizing the necessity of the conditions in Theorem~\ref{thm:ader_cond} to any triples of convex bodies in an affinely-optimal position.
In particular, we do not require any of the convex bodies to be Euclidean balls or even centrally-symmetric.
To be more precise, we say that, for convex bodies $K, L_1, L_2 \subseteq \R^n$, vectors $c, d \in \R^n$, and reals $r, R > 0$, the containment chain
\[
    r L_1 + c
    \subseteq K
    \subseteq R L_2 + d
\]
is \cemph{affinely-optimal} if for any linear operator $T$ on $\R^n$, vectors $c', d' \in \R^n$, and reals $r', R' >0$
with $r' L_1 + c' \subseteq T(K) \subseteq R' L_2 + d'$ we have
\[
    \frac{R'}{r'}
    \geq \frac{R}{r}.
\]
Thus, affinely-optimal positions are just Banach--Mazur distance positions when $L_1 = L_2$.
While this kind of approximation is less standard, it does appear in natural ways in various situations.
Besides the previously mentioned Gr\"unbaum distance (which uses $L_2=-L_1$),
other well-known examples include the Auerbach lemma
(where $L_1$ and $L_2$ are the standard cross-polytope and the standard cube in $\R^n$, respectively)
or the Dvoretzky--Rogers lemma (which corresponds to $L_1=\B^n$ and $L_2$ being the cube).
The result is as follows, where we write $\inte(X)$ for the \cemph{interior} of a set $X \subseteq \R^n$.

\begin{theorem}
\label{thm:generaldecomp}
Let $K,L_1,L_2 \subseteq \R^n$ be convex bodies with $0 \in \inte(L_1) \cap \inte(L_2)$,
let $c,d \in \R^n$ be vectors,
and let $r, R > 0$ be reals
such that the containment chain
$r L_1+ c \subseteq K \subseteq R L_2 + d$ is affinely-optimal.
Then the following conditions are satisfied:
\begin{enumerate}[(i)]
\item	For any matrix $A \in \R^{n \times n}$ and any vectors $v, w \in \R^n$,
	there exist a contact pair $(y, a)$ of $K$ and $r L_1 + c$
	and a contact pair $(z, b)$ of $K$ and $R L_2 + d$
	such that
\[
	\langle a, A y + v \rangle \leq \langle b, A z + w \rangle
		\quad \text{and} \quad
	\langle a, y - c \rangle = \langle b, z - d \rangle = 1.
\]
\item	There exist integers $N,M \geq 1$,
	contact pairs $(y^1,a^1), \ldots (y^N,a^N)$ of $K$ and $rL_1 + c$,
	contact pairs $(z^1,b^1), \ldots, (z^M,b^M)$ of $K$ and $RL_2 + d$,
	as well as weights $\lambda_1, \ldots, \lambda_N$, $\mu_1, \ldots, \mu_M > 0$
	such that for any $x \in \R^n$,
\[
	\sum_{i=1}^N \lambda_i \langle x, a^i \rangle y^i = \sum_{j=1}^M \mu_j \langle x, b^j \rangle z^j
		\quad \text{and} \quad
	\sum_{i=1}^N \lambda_i a^i
    = \sum_{j=1}^M \mu_j b^j
    = 0.
\]
\end{enumerate}
In this case, $\sum_{i=1}^N \lambda_i \langle y^i, a^i \rangle = \sum_{j=1}^M \mu_j \langle z^j, b^j \rangle$
and there exists a choice with $N+M \leq (n+1)^2$.
\end{theorem}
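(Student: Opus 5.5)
Following the approach of \cite{grundbacherkobos}, I would first prove (i) by a first-order perturbation argument and then deduce (ii) from (i) by a separation (Hahn--Banach/minimax) argument in the space $\R^{n\times n}\times\R^n$, which has dimension $n^2+n$. Carath\'eodory's theorem will then bound the number of contact pairs.

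\textbf{Step 1 (condition (i)).} Argue by contradiction. Suppose there are $A\in\R^{n\times n}$ and $v,w\in\R^n$ such that for every pair of normalized contact pairs $(y,a)$ with $rL_1+c$ and $(z,b)$ with $RL_2+d$ (normalized by $\langle a,y-c\rangle=\langle b,z-d\rangle=1$) we have $\langle a,Ay+v\rangle>\langle b,Az+w\rangle$. The sets of normalized contact pairs are compact, because the normals lie in normal cones of $L_i$ at boundary points and are bounded by the normalization, since $0\in\inte(L_i)$. Hence there is $\delta>0$ and $\tau\in\R$ with
\[
\langle a,Ay+v\rangle\ge\tau+\delta \quad\text{and}\quad \langle b,Az+w\rangle\le\tau-\delta
\]
for all such pairs. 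I would replace $A$ by $A-\tau I$ and adjust $v,w$ by $\tau c,\tau d$. This is allowed because $\langle a,y\rangle$ is affine in the normalized data: $\langle a,y\rangle=1+\langle a,c\rangle$. After this shift we may take $\tau=0$.

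Now consider $T_\varepsilon=I-\varepsilon A$ and translations $c_\varepsilon=T_\varepsilon c-\varepsilon v'$ and $d_\varepsilon=T_\varepsilon d-\varepsilon w'$ with suitable $v',w'$. The claim is that for small $\varepsilon>0$,
\[
(1+\varepsilon\eta)\,rL_1+c_\varepsilon\subseteq T_\varepsilon K\subseteq(1-\varepsilon\eta)\,RL_2+d_\varepsilon
\]
for some $\eta>0$. This improves the ratio $R/r$ and contradicts affine optimality.

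The containments come from support functions. We need $h_{T_\varepsilon K}(u)\le h_{RL_2+d}(u)-\varepsilon\eta R\,h_{L_2}(u)+\dots$ for all unit $u$. Near the directions $u$ that are normals at contact points, the first-order variation of $h_{T_\varepsilon K}$ is $-\varepsilon\langle b,Az\rangle$ (Danskin's formula). Away from contact normals there is a strict gap, so compactness handles those directions. The inner containment is treated dually, using $h_{rL_1+c}$ against $h_{T_\varepsilon K}$ at the inner contact pairs.

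\textbf{Main obstacle.} The delicate point is making Step 1 uniform. Danskin gives only one-sided directional derivatives. One needs upper semicontinuity of the set of maximizers and normals to handle directions $u$ that are merely close to contact normals. I would first try the standard argument: if the containment fails for $\varepsilon_k\to0$, extract convergent subsequences of the offending directions and points. These converge to a contact pair, and the limit contradicts the strict inequality with margin $\delta$. The sequence-extraction argument avoids needing genuine differentiability.

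\textbf{Step 2 (condition (ii)).} Consider the compact sets
\[
P=\{(y a^T,\,a):(y,a)\text{ normalized inner pair}\},\qquad Q=\{(z b^T,\,b):(z,b)\text{ normalized outer pair}\},
\]
both in $\R^{n\times n}\times\R^n$. Pair $(X,p)$ with $(A,v)$ via $\tr(A^TX)\ldots$; more precisely, the expression $\langle a,Ay+v\rangle$ equals $\tr(A\,ya^T)+\langle a,v\rangle$, which is linear in $(ya^T,a)$. Condition (i) with an independent $w$ on the right should be read as follows. Since $w$ is free, apply it with $w=0$ and with the homogeneous choices. The goal is to show
\[
\conv(P)\cap\conv(Q)\ne\emptyset .
\]

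If the two convex hulls were disjoint, strict separation would produce $(A,v)$ and a constant $\tau$. Absorbing $\tau$ as before gives a triple $(A,v,w)$ violating (i). A common point yields
\[
\sum\lambda_i\,y^i(a^i)^T=\sum\mu_j\,z^j(b^j)^T \quad\text{and}\quad \sum\lambda_i a^i=\sum\mu_j b^j .
\]
The extra conditions $\sum\lambda_i a^i=\sum\mu_j b^j=0$ separately require one more observation. Using the freedom of $v$ and $w$ independently (taking $v=\pm e_k$ with $w=0$, or the other way around), one can refine the separation to the product set. That is, separate $P\times\{0\}$-type cones, treating the $a$-part and the $b$-part separately in $\R^{n\times n}\times\R^n\times\R^n$.

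\textbf{Step 3 (trace identity and Carath\'eodory).} Taking the trace of the matrix identity gives
\[
\sum\lambda_i\langle y^i,a^i\rangle=\sum\mu_j\langle z^j,b^j\rangle .
\]
The normalization $\langle a,y-c\rangle=1$ ties the sum of the weights to the affine parts. A Carath\'eodory argument for the intersection of convex hulls in this ambient dimension, after removing the affine constraints, gives $N+M\le(n+1)^2$.
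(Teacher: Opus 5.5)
Your plan follows the paper's proof: (i) comes from perturbing $K$ by $T_\varepsilon$ with translations $c-\varepsilon v$, $d-\varepsilon w$ and passing to limits of contact points and normals. (ii) comes from the fact that, by (i), the sets $\{(ya^T,a,0)\}$ and $\{(zb^T,0,b)\}$ cannot be strictly separated in $\R^{n\times n}\times\R^n\times\R^n$. This is exactly the paper's Lemma~\ref{lem:separation}, and it is what yields the two separate conditions $\sum_i\lambda_i a^i=\sum_j\mu_j b^j=0$. The bound $N+M\le(n+1)^2$ then follows because everything lies in an affine hyperplane; the paper counts via Kirchberger, but conic Carath\'eodory gives the same number.

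Two small corrections. With your sign conventions the perturbation must be $T_\varepsilon=I+\varepsilon A$, not $I-\varepsilon A$. Also, the paper avoids your Danskin/uniformity obstacle by arguing directly: it takes the optimal $r_\varepsilon,R_\varepsilon$ for the perturbed chain, bounds them via norming functionals $a^\varepsilon\in\bd((K-c)^\circ)$ and $b^\varepsilon\in\bd(\frac1R L_2^\circ)$, and lets $\varepsilon\to0^+$.
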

\newpage

We do not know if some condition involving the weighted sums of the contact points $y^i$, $z^i$ like in \cite[Theorem~$3.8$]{glmp}
can be additionally imposed.
Moreover, the example of the cube and cross-polytope in $\R^n$ in their standard positions shows that
one cannot expect for the decomposition to be sufficient for affine-optimality in general,
even with such additional conditions included (which are automatically satisfied in this case due to the origin-symmetry).
Nevertheless, even though the existence of an Ader decomposition is merely a necessary condition,
it does shed some light on the usually difficult-to-tackle affinely-optimal positions.
The proof is presented in Section~\ref{sec:existence}.
It is based on the same underlying ideas as Theorem~\ref{thm:ader_cond},
combined with a simple convexity result generalizing the hyperplane separation theorem to handle the translation vectors.

In Section~\ref{sec:decompositions}, we show applications of Theorem~\ref{thm:generaldecomp}
to various approximation problems in convexity beyond those immediately related to affinely-optimal positions.
Since $L_1$ and $L_2$ may be completely unrelated to each other,
we can choose them in appropriate ways to reduce to other types of problems
and obtain optimality conditions specific for those.
These problems include volume-extremal approximation like in Theorem~\ref{thm:glmp} under some regularity conditions,
optimal containment under homothety \cite{brandenbergkoenig},
and the minimization of the diameter-inradius-ratio under affinity (see \cite[Section~$5$]{brandenberggrundbacherellipsoids}).

Afterward, we return to the setting of the Euclidean ball.
Again drawing parallels to the results on volume-extremal approximations,
one would expect the conditions in Theorem~\ref{thm:generaldecomp} to be sufficient for optimality in this case.
We verify this in a generalization of Theorem~\ref{thm:ader_cond},
providing a complete characterization of the Banach--Mazur distance position to the Euclidean ball.

\begin{theorem}
\label{thm:euclidean}
Let $K \subseteq \R^n$ be a convex body,
let $c,d \in \R^n$ be vectors,
and let $R \geq r > 0$ be reals
such that $r \B^n+c \subseteq K \subseteq R \B^n+d$.
Then the following conditions are equivalent:
\begin{enumerate}[(i)]
\item $d_{BM}(K,\B^n)=\frac{R}{r}$.
\item	For any matrix $A \in \ms{n}$ and any vectors $v, w \in \R^n$,
	there exist an inner contact point $y \in \bd(K) \cap \bd(r \B^n+c)$
	and an outer contact point $z \in \bd(K) \cap \bd(R \B^n+d)$ such that 
\[
	\left\langle \frac{y-c}{r}, A \left( \frac{y-c}{r} \right) + v \right\rangle
	\leq \left\langle \frac{z-d}{R}, A \left( \frac{z-d}{R} \right) + w \right\rangle.
\]
\item	There exist integers $N,M \geq 1$,
	inner contact points $y^1, \ldots y^N \in \bd(K) \cap \bd(r \B^n+c)$,
	outer contact points $z^1, \ldots, z^M  \bd(K) \cap \bd(R \B^n+d)$,
	as well as weights $\lambda_1, \ldots, \lambda_N$, $\mu_1, \ldots, \mu_M > 0$
	such that for any $x \in \R^n$,
\end{enumerate}
\[
	\sum_{i=1}^N \lambda_i \langle x, y^i - c \rangle (y^i - c) = \sum_{j=1}^M \mu_j \langle x, z^j - d \rangle (z^j - d)
		\quad \text{and} \quad
	\sum_{i=1}^N \lambda_i (y^i-c)
    = \sum_{j=1}^M \mu_j (z^j-d)
    = 0.
\]
In this case, $r^2 \sum_{i=1}^N \lambda_i = R^2 \sum_{j=1}^M \mu_j$
and there exists a choice with $N+M \leq \frac{n (n+5)}{2} + 1$.
\end{theorem}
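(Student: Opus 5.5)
The plan is to deduce the necessity directions from Theorem~\ref{thm:generaldecomp} with $L_1 = L_2 = \B^n$, and to prove sufficiency, (iii)$\Rightarrow$(i), directly by a convexity and Hölder-type estimate.

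\emph{Necessity.} With $L_1 = L_2 = \B^n$, affine optimality of $r\B^n + c \subseteq K \subseteq R\B^n + d$ is exactly (i). A contact pair $(y,a)$ of $K$ and $r\B^n + c$ normalized by $\langle a, y-c\rangle = 1$ is forced to be $a = (y-c)/r^2$. Similarly, for the outer ball $b = (z-d)/R^2$.

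Write $u = (y-c)/r$ and $w = (z-d)/R$. For arbitrary $A$, $v$, $w'$ the expression in Theorem~\ref{thm:generaldecomp}(i) becomes
\[
	\langle a, Ay + v \rangle = \langle u, Au\rangle + \langle u, (Ac+v)/r\rangle ,
\]
and analogously for the outer point. Only the symmetric part of $A$ enters the quadratic term, and the remaining terms can be absorbed into new translation vectors. Hence (i)$\Rightarrow$(ii).

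Substituting the same $a^i$, $b^j$ into Theorem~\ref{thm:generaldecomp}(ii) and rescaling the weights gives (iii) directly. The equivalence (ii)$\Leftrightarrow$(iii) is the usual separation argument. Work in the space $\ms{n}\times\R^n\times\R^n$ and consider the points $(uu^T, u, 0)$ for inner contacts and $(ww^T, 0, w)$ for outer contacts. Condition (iii) says the convex cones they generate meet nontrivially in the appropriate sense, and a separating functional $(A, v, w)$ would violate (ii).

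Taking traces in (iii) yields $r^2\sum\lambda_i = R^2\sum\mu_j$. Carathéodory's theorem in this space, of dimension $\frac{n(n+1)}{2}+2n$, gives the bound $N + M \le \frac{n(n+5)}{2}+1$.

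\emph{Sufficiency (iii)$\Rightarrow$(i).} This is the main step. Suppose $r'\B^n + c' \subseteq T(K) \subseteq R'\B^n + d'$; then $T$ is necessarily invertible. Set $u_i = (y^i - c)/r$ and $w_j = (z^j - d)/R$. Since $y^i$ is a contact point with the inscribed ball, $K \subseteq \{x : \langle x - c, u_i\rangle \le r\}$. Applying $T$ and testing the ball $r'\B^n + c'$ against this half-space gives
\[
	r'\|T^{-T}u_i\| \le r + (\text{a term linear in } u_i).
\]
Since $z^j \in K$, we have $\|T(d + R w_j) - d'\| \le R'$. Squaring and dropping the nonnegative constant term gives
\[
	R^2\|Tw_j\|^2 + (\text{a term linear in } w_j) \le R'^2 .
\]
Summing with weights $\lambda_i$, respectively $\mu_j$, the linear terms cancel because $\sum\lambda_i u_i = \sum \mu_j w_j = 0$.

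Next convert the outer sum into one over the inner points. By (iii), $\sum_j \mu_j R^2 w_jw_j^T = \sum_i \lambda_i r^2 u_iu_i^T$, so $\sum_j\mu_j\|Tw_j\|^2$ can be expressed as a trace over the $u_i$. Combined with the trace identity this gives, for the probability weights $p_i = \lambda_i/\sum\lambda_k$,
\[
	\sum_i p_i\|T^{-T}u_i\| \le \frac{r}{r'}, \qquad \sum_i p_i\|Tu_i\|^2 \le \frac{R'^2}{R^2}.
\]

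Finally use $1 = \langle Tu_i, T^{-T}u_i\rangle \le \|Tu_i\|\,\|T^{-T}u_i\|$ together with Hölder's inequality with exponents $3$ and $3/2$:
\[
	1 \le \sum_i p_i \|Tu_i\|^{2/3}\|T^{-T}u_i\|^{2/3}
	\le \Bigl(\sum_i p_i\|Tu_i\|^2\Bigr)^{1/3}\Bigl(\sum_i p_i\|T^{-T}u_i\|\Bigr)^{2/3}
	\le \Bigl(\frac{R'}{R}\Bigr)^{2/3}\Bigl(\frac{r}{r'}\Bigr)^{2/3}.
\]
Hence $R'/r' \ge R/r$, which is (i).

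The delicate point is choosing this mixed Hölder exponent. The inner constraint controls only a first moment of $\|T^{-T}u_i\|$, while the outer constraint controls a second moment of $\|Tu_i\|$, so Cauchy--Schwarz alone does not suffice.
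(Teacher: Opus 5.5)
Your proposal is correct. For sufficiency it takes a genuinely different route from the paper; the necessity part is also organized a little differently.

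\textbf{Necessity.} The paper does not apply Theorem~\ref{thm:generaldecomp} with $L_1=L_2=\B^n$. It rewrites the chain as $\frac1R(K-d)\subseteq\B^n\subseteq\frac1r(K-c)$, proves the more general Theorem~\ref{thm:euclidean2}, and then transfers (ii) and (iii) back via $A'=-A$, $v'=-w$, $w'=-v$. Your direct application is slightly cleaner. With the ball on the outside, the normalization forces $a=(y-c)/r^2$ and $b=(z-d)/R^2$, so (ii) drops out at once, without the $1/(1-\langle y,c\rangle)$ factors the paper mentions. Your separation argument for (ii)$\Leftrightarrow$(iii) and the Carath\'eodory count match Lemma~\ref{lem:separation} and the paper's bound.

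\textbf{Sufficiency.} The paper proves (ii)$\Rightarrow$(i) by contraposition. From a better ellipsoid $E$ it builds the mean ellipsoids $E_\lambda$ of Theorem~\ref{thm:general_mean_ellipsoid}; this needs the delicate outer translation $d_\lambda$ from Lemmas~\ref{lem:technical} and~\ref{lem:outer_mean_ellipsoid}. It then shows that $A_\lambda=I_n-S_\lambda$, with suitable $v_\lambda,w_\lambda$, violates (ii) for small $\lambda>0$.

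You instead prove (iii)$\Rightarrow$(i) by a direct certificate argument, in the spirit of the classical sufficiency proof for John's theorem. You test the competitor against the tangent half-spaces at the inner contacts and against the outer contact points. Summing with the weights cancels all translation terms by the balancing conditions. The matrix and trace identities then transfer the second moment to the $u_i$, and $\langle Tu_i,T^{-T}u_i\rangle=1$ closes the argument. I checked each step and it goes through.

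The final step is less delicate than you suggest. With $X_i=\|Tu_i\|$, Jensen gives $\sum_i p_iX_i^{-1}\ge(\sum_i p_iX_i)^{-1}\ge(\sum_i p_iX_i^2)^{-1/2}$, which does the same job as your H\"older bound.

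\textbf{What each route buys.} Your route is much shorter, quantitative, and bypasses Section~\ref{sec:meanellipsoid} entirely. It also adapts verbatim to Theorem~\ref{thm:euclidean2} with arbitrary $L_1,L_2$: use that $RL_2+d$ lies in the tangent half-space of $\B^n$ at each outer contact, and that each inner contact lies in $rL_1+c$. The paper's route is chosen because the mean ellipsoid theorem, with its description of where $E_\lambda$ can touch $K$, is needed again in Lemma~\ref{lem:ball_distance_center} and Theorem~\ref{thm:maurey}. Your argument gives no such structural information about contact points.
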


In fact, we establish a stronger version where only the middle convex body is the Euclidean ball.
Like in Theorem~\ref{thm:generaldecomp}, the inner and outer convex bodies can be possibly different in the following result.

\begin{theorem}
\label{thm:euclidean2}
Let $L_1,L_2 \subseteq \R^n$ be convex bodies,
let $c,d \in \R^n$ be vectors,
and let $r,R > 0$ be reals
such that $rL_1 + c \subseteq \B^n \subseteq RL_2 + d$.
Then the following conditions are equivalent:
\begin{enumerate}[(i)]
\item The containment chain $rL_1 + c \subseteq \B^n \subseteq RL_2 + d$ is affinely-optimal.
\item For any matrix $A \in \ms{n}$ and any vectors $v, w \in \R^n$,
	there exist an inner contact point $y \in \bd(\B^n) \cap \bd(r L_1+c)$
	and an outer contact point $z \in \bd(\B^n) \cap \bd(RL_2+d)$
	such that 
\[
	\langle y, Ay+v \rangle  \leq \langle z, Az+w \rangle.
\]
\item There exist integers $N,M \geq 1$,
	inner contact points $y^1, \ldots y^N \in \bd(\B^n) \cap \bd(r L_1+c)$,
	outer contact points $z^1, \ldots, z^M  \bd(\B^n) \cap \bd(RL_2+d)$,
	as well as weights $\lambda_1, \ldots, \lambda_N$, $\mu_1, \ldots, \mu_M > 0$
	such that for any $x \in \R^n$,
\[
	\sum_{i=1}^N \lambda_i \langle x, y^i \rangle y^i = \sum_{j=1}^M \mu_j \langle x, z^j \rangle z^j
		\quad \text{and} \quad
	\sum_{i=1}^N \lambda_i y^i
    = \sum_{j=1}^M \mu_j z^j
    = 0.
\]
\end{enumerate}
In this case, $\sum_{i=1}^N \lambda_i = \sum_{j=1}^M \mu_j$
and there exists a choice with $N+M  \leq \frac{n (n+5)}{2} + 1$.
\end{theorem}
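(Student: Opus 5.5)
We establish the cycle (i) $\Rightarrow$ (iii) $\Rightarrow$ (ii) $\Rightarrow$ (i), and then verify the weight identity and the bound on $N+M$.

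\textbf{Step (i) $\Rightarrow$ (iii).} We apply Theorem~\ref{thm:generaldecomp} with $K = \B^n$. For it we may assume $0 \in \inte(L_1) \cap \inte(L_2)$ by translating $L_1, L_2$ and adjusting $c, d$. It yields contact pairs $(y^i,a^i)$ of $\B^n$ and $rL_1+c$, and $(z^j,b^j)$ of $\B^n$ and $RL_2+d$, with the stated decomposition. Since the middle body is the Euclidean ball, every contact point lies on $\bd(\B^n)$. The outer normal of a supporting halfspace of $\B^n$ at a boundary point $y$ is forced to be a positive multiple of $y$.
\begin{itemize}
\item For the outer pairs, $b^j$ supports $RL_2+d \supseteq \B^n$ at $z^j$, so it supports $\B^n$ there, and $b^j = \beta_j z^j$ with $\beta_j>0$.
\item For the inner pairs, $a^i$ supports $rL_1+c$ at $y^i$. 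Since $\B^n \subseteq$ the halfspace we need is reversed, we use instead that $\B^n$ is smooth: $rL_1+c \subseteq \B^n$ touches at $y^i$, and the contact pairs in Theorem~\ref{thm:generaldecomp} for the inner chain are defined with respect to the outer body $K=\B^n$. So $a^i$ supports $\B^n$ at $y^i$, giving $a^i=\alpha_i y^i$ with $\alpha_i>0$.
\end{itemize}
Absorbing $\alpha_i,\beta_j$ into the weights turns the decomposition of Theorem~\ref{thm:generaldecomp}(ii) into (iii). Taking traces gives $\sum\lambda_i|y^i|^2=\sum\mu_j|z^j|^2$, hence $\sum\lambda_i=\sum\mu_j$ because all points are unit vectors.

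\textbf{Step (iii) $\Rightarrow$ (ii).} Given $A\in\ms{n}$ and $v,w$, we compute
\[
\sum_i\lambda_i\langle y^i,Ay^i+v\rangle=\tr\Big(A\sum_i\lambda_i y^i(y^i)^T\Big)+\Big\langle \sum_i\lambda_iy^i,v\Big\rangle .
\]
The last term vanishes, and the same computation holds on the $z$ side. So the two weighted sums are equal, with equal total weight $\sum\lambda_i=\sum\mu_j=:S$. Averaging, the minimum of $\langle y^i,Ay^i+v\rangle$ is at most the common average, which is at most the maximum of $\langle z^j,Az^j+w\rangle$. Choosing those indices gives (ii).

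\textbf{Step (ii) $\Rightarrow$ (i).} This is the main obstacle. Suppose the chain is not optimal. Then there are $T,c',d',r',R'$ with $r'L_1+c'\subseteq T\B^n\subseteq R'L_2+d'$ and $R'/r'<R/r$. By polar decomposition and rotation invariance of $\B^n$ we may take $T$ positive definite. The plan is to interpolate along the affinely parametrized family $\mathcal{E}_t=(I+tA)\B^n+tv'$ with $I+A=T$, and use convexity of the set of feasible containments.

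The key facts are:
\begin{itemize}
\item $\{(P,p): rL_1+c\subseteq P\B^n+p\}$ is convex in $(P,p)$, since the support function $h_{P\B^n+p}(u)=|Pu|+\langle p,u\rangle$ is convex in $(P,p)$.
\item $\{(P,p): P\B^n+p\subseteq RL_2+d\}$ is convex as well, since it is an intersection over $x\in\B^n$.
\end{itemize}
After normalizing scales by $r\to1$, a strictly better position yields some $(P,p)=(I+A,v)$ and scalars with $\rho_1 L_1$-containment and a $\rho_2 L_2$-containment improving the ratio.

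For small $t$, the inner contact points $y$ must stay inside the enlarged ellipsoid, so the first-order term satisfies $\langle y,Ay+v\rangle\ge\delta_1$. Similarly, outer contact points $z$ force $\langle z,Az+w\rangle\le\delta_2$. The fact $|(I+tA)y|\approx1+t\langle y,Ay\rangle$ on unit vectors is what converts the containment to these quadratic-form inequalities, with first-order tangency at smooth points of the ball. The gain in the ratio gives $\delta_1>\delta_2$ after a suitable scalar shift of $A$ by a multiple of $I$, using $\langle y,Iy\rangle=\langle z,Iz\rangle=1$. This yields $\langle y,Ay+v\rangle>\langle z,Az+w\rangle$ for all contact points, contradicting (ii). The delicate point is making the first-order argument rigorous via convexity, and I would do this by passing to the segment between the identity and the better position rather than using a limit.

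\textbf{Counting.} Finally, the bound $N+M\le\frac{n(n+5)}{2}+1$ follows from Carathéodory's theorem in $\ms{n}\times\R^n\times\R^n$, which has dimension $\frac{n(n+1)}{2}+2n$, applied to the cone condition expressing (iii) as $0$ lying in the convex hull of the points $(yy^T,y,0)$ and $(-zz^T,0,-z)$ with the normalization $\sum\lambda_i=\sum\mu_j$.
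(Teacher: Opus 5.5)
Your steps (i)$\Rightarrow$(iii) and (iii)$\Rightarrow$(ii), the trace identity, and the count are fine. Your Carath\'eodory argument in $\ms{n}\times\R^n\times\R^n$ gives exactly $\frac{n(n+5)}{2}+1$, and comparing traces shows that both index sets survive the reduction.

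The gap is in (ii)$\Rightarrow$(i), at the point you yourself call delicate: your first ``key fact'' is false. The condition $rL_1+c\subseteq P\B^n+p$ says $h_{rL_1+c}(u)\le |Pu|+\langle p,u\rangle$ for all $u$. That bounds a convex function of $(P,p)$ from \emph{below}, so it defines a superlevel set, which need not be convex. Convexity of $|Pu|+\langle p,u\rangle$ is what makes your \emph{second} set convex, not the first.

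Here is a concrete failure. Take $Q_t=\begin{pmatrix}1+t&-t\\-t&t\end{pmatrix}$ and $u=(1,1)/\sqrt2$. Then $e_1^TQ_t^{-1}e_1=1$ and $Q_tu=\langle u,e_1\rangle e_1$, so for every $t>0$ the point $e_1$ lies on $\bd(Q_t^{1/2}\B^2)$ with outer normal $u$. However, $Q_t^{1/2}u$ is parallel to $(1+\sqrt t,\sqrt t)$. So for $s\neq t$ the strict triangle inequality gives $h_{\frac12(Q_s^{1/2}+Q_t^{1/2})\B^2}(u)<\langle u,e_1\rangle$, and the matrix-midpoint ellipse misses $e_1$. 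The same happens for a small disc around $e_1$ after slightly enlarging both ellipses. Hence the segment from $\B^n$ to the better ellipsoid can leave the set of ellipsoids containing a dilate of $L_1$. Your lower bound $\langle y,Ay+v\rangle\ge\delta_1$ for inner contact points therefore has no justification.

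The paper gets around this with Theorem~\ref{thm:general_mean_ellipsoid}. It interpolates by geometric means $E_\lambda$, choosing the centres of the ellipsoids that contain $L_1$ non-linearly (Lemmas~\ref{lem:technical} and~\ref{lem:outer_mean_ellipsoid}). This yields genuine chains $\rho^\lambda(L_1+c_\lambda)\subseteq E_\lambda\subseteq L_2+d_\lambda$, and the paper then violates (ii) with $A_\lambda=I-S_\lambda$ for small $\lambda$.

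You could also avoid any path. Normalise $r=R=1$ and let $\rho(L_1+c')\subseteq T\B^n\subseteq L_2+d'$ with $T$ positive definite and $\rho>1$.
For an inner contact point $y$, Cauchy--Schwarz gives $\langle y,T^{-1}(y-c+c')\rangle\le|T^{-1}(y-c+c')|\le\rho^{-1}$.
For an outer contact point $z$, every outer normal of $L_2+d$ at $z$ is a positive multiple of $z$ (since $\B^n\subseteq L_2+d$), so $h_{L_2+d}(z)=1$. Hence $\langle z,Tz\rangle\le|Tz|=h_{T\B^n}(z)\le 1+\langle z,d'-d\rangle$.
Since $T-2I+T^{-1}=(T-I)T^{-1}(T-I)$ is positive semi-definite, take $A=T-I$, $v=T^{-1}(c-c')$, $w=d-d'$. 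This gives $\langle y,Ay+v\rangle\ge 1-\rho^{-1}>0\ge\langle z,Az+w\rangle$ for all inner and outer contact points, contradicting (ii). Some argument of this kind, or the paper's mean-ellipsoid route, is needed in place of your convexity claim.
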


The main difference in the proofs of Theorems~\ref{thm:ader_cond}~and~\ref{thm:euclidean2}
lies in the respective proofs of sufficiency of the conditions for optimality.
While the necessity is established along similar lines (and basically follows from Theorem~\ref{thm:generaldecomp}), 
the sufficiency is much more difficult to obtain in the general case.
As demonstrated in \cite[Section~$2$]{grundbacherkobos},
when one assumes that $r \B^n \subseteq K \subseteq R \B^n$ is not affinely-optimal for an origin-symmetric convex body $K$,
it is easy to construct an explicit matrix $A$ for which the non-separation condition (ii) in Theorem~\ref{thm:ader_cond} fails.
In the more general setting of Theorem~\ref{thm:euclidean2}, however,
the analogous approach does not immediately lead to the same conclusion.
The main obstacle herein lies in the additional translation vectors,
which could be omitted in the symmetric case.
We circumvent this problem with a general approach from non-linear optimization
by proving ``convexity'' of the underlying problem.
Namely, we show that if $K, L \subseteq \R^n$ are convex bodies,
$E_0$ and $E_1$ are origin-centered ellipsoids,
$c_0, c_1, d_0, d_1$ are vectors,
and $r_0, r_1, R_0, R_1 > 0$ are reals such that
\[
    r_0 K - c_0
    \subseteq E_0
    \subseteq R_0 L - d_0
        \quad \text{and} \quad
    r_1 K - c_1
    \subseteq E_1
    \subseteq R_1 L - d_1,
\]
then for any $\lambda \in (0,1)$ there exist
a (geometric) mean ellipsoid $E_\lambda$ of $E_0$ and $E_1$ as well as vectors $c_\lambda, d_\lambda \in \R^n$ such that
\[
    r_0^{1-\lambda} r_1^\lambda K - c_\lambda
    \subseteq E_\lambda
    \subseteq R_0^{1-\lambda} R_1^\lambda L - d_\lambda.
\]
Based on this, we can show in the setting of Theorem~\ref{thm:euclidean2} that
if some ellipsoid $E$ achieves a better homothety ratio than $\B^n$,
then slight appropriate perturbations of $\B^n$ toward $E$ also achieve a better ratio and,
using a limiting argument, indeed contradict condition (ii) in the theorem.
The proofs of the mean ellipsoid result and Theorems~\ref{thm:euclidean}~and~\ref{thm:euclidean2}
are provided in Sections~\ref{sec:meanellipsoid}~and~\ref{sec:euclidean}, respectively.
\pagebreak

We believe that the above mean ellipsoid result is of independent interest.
Its importance is also highlighted when
we turn to some applications of Theorem~\ref{thm:euclidean} in Section~\ref{sec:eucl_appl},
where it is often the interplay of the two results that enables our proofs.
Our first such application is an alternative proof of
the inequality $d_{BM}(K, \B^n) \leq n$ for an arbitrary convex body $K \subseteq \R^n$.
Compared to the standard proof using the John Ellipsoid Theorem,
our approach allows for a more straightforward characterization of the equality case,
i.e., that the equality holds if and only if $K$ is a simplex.
As another application of Theorem~\ref{thm:euclidean},
we establish an extension of Maurey's result on unique distance ellipsoids to the non-symmetric case in Theorem~\ref{thm:maurey},
which uses projections instead of sections.

\section{Existence of an Ader Decomposition in the Affinely-Optimal Position}
\label{sec:existence}

The goal of this section is to the prove Theorem~\ref{thm:generaldecomp}.
As initially mentioned, this requires a generalization of the standard hyperplane separation theorem
that characterizes when two convex bodies intersect in a point in a given linear subspace via some non-separation condition.
This is needed since the balancing condition in Theorem~\ref{thm:generaldecomp}~(ii)
requires that the weighted sums of outer normals equal zero.
If the condition were only that the sums coincide,
then Theorem~\ref{thm:generaldecomp}~(ii) could be equivalently stated as
just some convex bodies in the matrix space $\R^{n \times (n+1)}$ intersecting,
which could be handled with the usual separation results.

To get the claimed upper bound on the number of contact pairs in a minimal decomposition in Theorem~\ref{thm:generaldecomp}~(ii),
we also verify a generalization of Kirchberger's Theorem for our required type of separation.
This generalization is, in fact, a direct consequence of Kirchberger's Theorem,
which becomes apparent from our proof below that is inspired by the proof of Kirchberger's Theorem from \cite[Theorem~$1.3.11$]{schneider}.
We write $\pi_U$ for the \cemph{orthogonal projection} from $\R^n$ onto a linear subspace $U \subseteq \R^n$, and $U^\perp$ for the \cemph{orthogonal complement}.
For a set $X \subseteq \R^n$, we denote by $\conv(X)$ its \cemph{convex hull} and by $\lin(X)$ its \cemph{linear span}.

\begin{lemma}
\label{lem:separation}
Let $K,L \subseteq \R^n$ be compact sets and let $U \subseteq \R^n$ be a linear subspace.
Then the following conditions are equivalent:
\begin{enumerate}[(i)]
\item $\conv(K) \cap \conv(L) \cap U = \emptyset$.
\item There exist $a \in U$ and $v,w \in U^\perp$ such that $\langle a+v, x \rangle < \langle a+w, y \rangle$ for all $x \in K$ and $y \in L$.
\item For all subsets $K' \subseteq K$ and $L' \subseteq L$ with $|K'| + |L'| \leq 2 (n+1) - \dim(U)$ there exist $a \in U$ and $v,w \in U^\perp$ such that $\langle a+v, x \rangle < \langle a+w, y \rangle$ for all $x \in K'$ and $y \in L'$.
\end{enumerate}
\end{lemma}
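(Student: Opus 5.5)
The approach is to lift the problem into a larger space in which condition (ii) becomes ordinary strict separation by an affine hyperplane, and then invoke the classical strict separation theorem for compact convex sets and Kirchberger's Theorem there. Write $k = \dim(U)$ and decompose every $x \in \R^n$ as $x = x_U + x_\perp$ with $x_U = \pi_U(x)$ and $x_\perp = \pi_{U^\perp}(x)$. Consider the space $W = U \times U^\perp \times U^\perp$, of dimension $d := k + 2(n-k) = 2n - k$. Define the linear embeddings $\alpha(x) = (x_U, x_\perp, 0)$ and $\beta(y) = (y_U, 0, y_\perp)$, and put $\tilde K = \alpha(K)$ and $\tilde L = \beta(L)$. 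Both are compact, and $\conv(\tilde K) = \alpha(\conv K)$, $\conv(\tilde L) = \beta(\conv L)$ because the maps are linear.

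\textbf{Step 1: reformulating (ii).} A linear functional on $W$ is given by a triple $(a, v, w) \in U \times U^\perp \times U^\perp$, acting by pairing coordinatewise. Since $a \in U$ and $v, w \in U^\perp$, we have
\[
\langle (a,v,w), \alpha(x) \rangle = \langle a, x_U \rangle + \langle v, x_\perp \rangle = \langle a+v, x \rangle,
\qquad
\langle (a,v,w), \beta(y) \rangle = \langle a+w, y \rangle .
\]
Hence (ii) says exactly that some linear functional $f$ on $W$ satisfies $f < f'$ pointwise, i.e.\ $\max_{\tilde K} f < \min_{\tilde L} f$. No additive constant appears in (ii), but this is harmless: $\max_{\tilde K} f < \min_{\tilde L} f$ is equivalent to strict separation of $\tilde K$ and $\tilde L$ by an affine hyperplane $\{f = t\}$, with any $t$ strictly between the two values. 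The strict inequality is also unaffected by passing to convex hulls.

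\textbf{Step 2: (i)$\Leftrightarrow$(ii).} By the strict separation theorem for compact convex sets, (ii) holds iff $\alpha(\conv K) \cap \beta(\conv L) = \emptyset$. Now compute the intersection. The equality $\alpha(x) = \beta(y)$ means $x_U = y_U$, $x_\perp = 0$ and $y_\perp = 0$, which is the same as $x = y \in U$. Thus the lifted hulls meet iff $\conv(K) \cap \conv(L) \cap U \neq \emptyset$, which gives the equivalence.

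\textbf{Step 3: (ii)$\Leftrightarrow$(iii) via Kirchberger.} The direction (ii)$\Rightarrow$(iii) is trivial. For the converse, apply Kirchberger's Theorem in $W \cong \R^{d}$ to the compact sets $\tilde K$ and $\tilde L$. It states that they are strictly separable by a hyperplane iff every pair of subsets $\tilde K' \subseteq \tilde K$, $\tilde L' \subseteq \tilde L$ with $|\tilde K'| + |\tilde L'| \le d+2$ is strictly separable. Here $d + 2 = 2(n+1) - k$, exactly the bound in (iii). By Step 1 applied to the finite sets $K'$ and $L'$, strict separability of $\alpha(K')$ and $\beta(L')$ is precisely the condition in (iii).

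\textbf{Main obstacle.} The step needing the most care is the version of Kirchberger's Theorem used: for compact (possibly infinite) sets with strict separation. Should only the finite version be available, I would follow the route the paper indicates, via \cite[Theorem~$1.3.11$]{schneider}. The idea is that strict separability of $\tilde K$ and $\tilde L$ is equivalent to $\conv(\tilde K - \tilde L)$ not containing $0$ (strict separation then holds automatically by compactness). By Carath\'eodory, $0 \in \conv(\tilde K - \tilde L)$ would place $0$ in the convex hull of at most $d+1$ differences. The usual Kirchberger counting then reduces this to at most $d+2$ original points, whose separability is guaranteed by (iii), giving a contradiction.
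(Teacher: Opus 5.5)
Your proof is correct and is essentially the paper's argument: the same lift to $U \times U^\perp \times U^\perp$ with $K$ and $L$ embedded as $(\pi_U(x),\pi_{U^\perp}(x),0)$ and $(\pi_U(y),0,\pi_{U^\perp}(y))$, strict separation for (i)$\Leftrightarrow$(ii), and Kirchberger's Theorem in this $(2n-\dim(U))$-dimensional space for (ii)$\Leftrightarrow$(iii). The paper also simply invokes Kirchberger's Theorem for compact sets, so your fallback sketch is not needed.
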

\begin{proof}
First, let us prove the implication from (ii) to (i).
Assume that (ii) holds for some $a \in U$ and $v,w \in U^\perp$.
By convexity, the inequality in (ii) holds true for all $x \in \conv(K)$ and $y \in \conv(L)$.
Thus, for $x \in \conv(K) \cap U$ and $y \in \conv(L) \cap U$ we have
\[
	\langle a, x \rangle
	= \langle a+v, x \rangle
	< \langle a+w, y \rangle
	= \langle a, y \rangle.
\]
This shows that $x \neq y$ and, hence, $\conv(K) \cap \conv(L) \cap U = \emptyset$.
\pagebreak

Next, we establish the implication from (i) to (ii).
The required condition in (ii) can be rewritten as
\begin{equation}
\label{eq:separation_alternative}
	\langle a, \pi_U(x) \rangle + \langle v, \pi_{U^{\perp}}(x) \rangle
	= \langle a+v, x \rangle
	< \langle a+w, y \rangle
	= \langle a, \pi_U(y) \rangle + \langle w, \pi_{U^{\perp}}(y) \rangle
\end{equation}
for all $x \in K$, $y \in L$ and some $a \in U$, $v, w \in U^{\perp}$.
Let us consider the linear vector space $V = U \times U^{\perp} \times U^{\perp}$
and compact sets $K_0 := \{ (\pi_U(x), \pi_{U^{\perp}}(x), 0) : x \in K \} \subseteq V$
and $L_0 := \{ (\pi_U(y), 0, \pi_{U^{\perp}}(y)) : y \in L \} \subseteq V$.
The assumption (i) implies that $\conv(K_0) \cap \conv(L_0) = \emptyset$.
Indeed, we would otherwise have some $x \in \conv(K)$ and $y \in \conv(L)$ with
$\pi_U(x) = \pi_U(y)$ and $\pi_{U^{\perp}}(x) = \pi_{U^{\perp}}(y) = 0$.
However, this would mean $x = y \in U$, contradicting (i).
Thus, the standard hyperplane separation theorem applied in $V$
yields some vector $(a, v, w) \in V$ strictly separating $\conv(K_0)$ and $\conv(L_0)$.
It is straightforward to verify that this vector satisfies \eqref{eq:separation_alternative}.

Lastly, the equivalence of (ii) and (iii) follows directly from Kirchberger's Theorem.
It states in our case that $K_0$ and $L_0$ can be strictly separated if and only if
any subsets of $K_0$ and $L_0$ of total cardinality at most $\dim(V) + 2 = 2n - \dim(U) + 2$ can be strictly separated.
Again using \eqref{eq:separation_alternative}, the latter statement is clearly equivalent to (iii).
\end{proof}

In the remark below, we give a geometric interpretation of the above lemma.
We also show that the estimate on the number of points required in condition (iii) cannot be improved in general.

\begin{remark}
\label{rem:separation}
A more geometric interpretation of Lemma~\ref{lem:separation}
states that the condition $\conv(K) \cap \conv(L) \cap U = \emptyset$
is equivalent to the existence of two closed halfspaces $H_K, H_L \subseteq \mathbb{R}^n$ with
\[
	K \subseteq H_K,
		\quad
	L \subseteq H_L
		\quad \text { and } \quad
	H_K \cap H_L \cap U = \emptyset.
\]
Moreover, the lemma immediately shows the Helly-type result that $\conv(K) \cap \conv(L) \cap U = \emptyset$ is satisfied
if and only if $\conv(K') \cap \conv(L') \cap U = \emptyset$ for all subsets $K' \subseteq K$ and $L' \subseteq L$
with $|K'| + |L'| \leq 2 (n+1) - \dim(U)$.
Both of the above observations clearly remain true if $U$ is an affine subspace instead of a linear subspace.

Furthermore, we remark that the number $2(n+1) - \dim(U)$ in the lemma is best possible.
Indeed, let $d \in \{0, \ldots, n\}$, let $e^1, \ldots, e^n \in \R^n$ from a basis of $\R^n$,
and choose the linear subspace $U = \lin \{ e^i : i = 1, \ldots,  d \}$ with $d = \dim(U)$.
We define the sets $K, L \subseteq \R^n$ as
\[
    K = \left \{ e^1, \ldots, e^n, -\frac{1}{2n} \sum_{i=1}^n e^i \right\}
        \quad \text{and} \quad
    L = \left\{ -e^{d+1}, \ldots, -e^n, \frac{1}{2n} \sum_{i=1}^n e^i \right\}.
\]
It is straightforward to verify that $u := \frac{1}{3n - d} \sum_{i=1}^d e^i$
is the unique point in $\conv(L) \cap U$,
generated from a unique convex combination that assigns positive weight to all $n+1-d$ points in $L$.
Similarly, one can check that $u \in \conv(K) \cap U$,
again with a unique convex combination that assigns positive weight to all $n+1$ points in $K$.
Thus, the point $u$ is the unique point in the intersection $\conv(K) \cap \conv(L) \cap U$,
with the total number of points in the convex combinations from $K$ and $L$ equal to $2 (n + 1) - d$.
Consequently, if $K' \subseteq K$ and $L' \subseteq L$ with $|K'| + |L'| < 2 (n + 1) - d$,
then $\conv(K') \cap \conv(L') \cap U = \emptyset$.
By Lemma~\ref{lem:separation}, the required vectors $a, v, w$ exist for any such $K'$ and $L'$,
whereas they do not for $K$ and $L$ since $\conv(K) \cap \conv(L) \cap U \neq \emptyset$.
\end{remark}

With Lemma~\ref{lem:separation} at our disposal, we are ready for the goal of this section, the proof of Theorem~\ref{thm:generaldecomp}.
It follows a similar structure to the one presented for Theorem~\ref{thm:ader_cond} in \cite{grundbacherkobos},
and has some parallels to the proof of \cite[Theorem~$2.1$]{lewis}.
The biggest difference in the methods employed for our proof is caused by the possible translations included in our case here,
which were absent in \cite{grundbacherkobos} and \cite{lewis}.
Moreover, one needs to be somewhat careful in the execution of the proof to avoid having to impose any regularity assumptions,
which would be necessary if we directly transferred the approach in \cite{lewis} (see \cite[Remark~$2.1$]{grundbacherkobos}).

Before we give the proof, let us introduce some notation used throughout.
For a convex body $K \subseteq \R^n$ with $0 \in \inte(K)$, its \cemph{polar} is the convex body
\[
    K^\circ
    = \{ a \in \R^n : \langle x, a \rangle \leq 1 \text{ for all } x \in K \}.
\]
The \cemph{gauge function} (or \cemph{Minkowski functional}) of $K$ is given by
\[
    \|x\|_K
    = \min \{ \rho \geq 0 : x \in \rho K \}
    \quad \text{for all } x \in K.
\]
For matrices $A,B \in \R^{n \times m}$, their \cemph{Frobenius inner product} is defined via the trace as
\[
    \langle A, B \rangle_F
    = \tr(A^T B).
\]
We write $I_n$ for the identity matrix in $\R^{n \times n}$ and we denote the extension of a matrix $A \in \R^{n \times n}$ by another column $x \in \R^n$ as $(A|x) \in \R^{n \times (n+1)}$.

\begin{proof}[Proof of Theorem~\ref{thm:generaldecomp}.]
\newcommand\mat{A}
We first note that, by the assumptions, $0$ is in the interior of $K-c$.
Let $A \in \R^{n \times n}$ and $v,w \in \R^n$ be chosen arbitrarily.
For $\varepsilon > 0$
we define a linear operator $T_\varepsilon : \R^n \to \R^n$ by $T_\varepsilon(x) = (I_n + \varepsilon \mat) x$.
For sufficiently small $\varepsilon > 0$, the
Neumann series show that $T_\varepsilon$ is invertible,
and we have that $c - \varepsilon v \in \inte (T_\varepsilon(K))$.
Now, let $r_\varepsilon > 0$ be maximal and $R_\varepsilon > 0$ be minimal with
\[
    r_\varepsilon L_1 + c - \varepsilon v \subseteq T_\varepsilon(K) \subseteq R_\varepsilon L_2 + d - \varepsilon w.
\]
Then there exist choices of points
$y^\varepsilon, z^\varepsilon \in \bd(K)$
such that $T_\varepsilon(y^\varepsilon) + \varepsilon v - c \in \bd(r_\varepsilon L_1)$
and $T_\varepsilon(z^\varepsilon) + \varepsilon w - d \in \bd(R_\varepsilon L_2)$.
By the affine-optimality of $r L_1 + c \subseteq K \subseteq R L_2 + d$, we have
\[
    \frac{R}{r}
    \leq \frac{R_\varepsilon}{r_\varepsilon}
    = \frac{\|T_\varepsilon(z^\varepsilon) + \varepsilon w - d\|_{L_2}}{\|T_\varepsilon(y^\varepsilon) + \varepsilon v - c\|_{L_1}}.
\]
Rearranging gives
\begin{equation}
\label{eq:main_ineq1}
    \frac{\|T_\varepsilon(y^\varepsilon) + \varepsilon v - c\|_{L_1}}{r}
    \leq \frac{\|T_\varepsilon(z^\varepsilon) + \varepsilon w - d\|_{L_2}}{R}.
\end{equation}
Next, we choose some $a^\varepsilon \in \bd((K-c)^\circ)$ and $b^\varepsilon \in \bd(\frac{1}{R} L_2^\circ)$ such that
\[
   \langle a^\varepsilon, y^\varepsilon - c \rangle =  \|y^\varepsilon - c\|_{K-c} = 1
    \quad \text{and} \quad
     \langle R b^\varepsilon, T_\varepsilon(z^\varepsilon) + \varepsilon w - d \rangle = \|T_\varepsilon(z^\varepsilon) + \varepsilon w - d\|_{L_2} = R_{\varepsilon}.
\]
Then $r a^\varepsilon \in r (K-c)^\circ \subseteq L_1^\circ$ and $y^\varepsilon - c \in \bd(K-c)$ imply
\begin{align*}
    \|T_\varepsilon(y^\varepsilon) + \varepsilon v - c\|_{L_1}
    & \geq \langle r a^\varepsilon, T_\varepsilon(y^\varepsilon) + \varepsilon v - c \rangle
    = r \langle a^\varepsilon, y^\varepsilon - c \rangle + r \varepsilon \langle a^\varepsilon, \mat y^\varepsilon + v \rangle
    \\
    & = r \|y^\varepsilon - c\|_{K-c} + r \varepsilon \langle a^\varepsilon, \mat y^\varepsilon + v \rangle
    = r (1 + \varepsilon \langle a^\varepsilon, \mat y^\varepsilon + v \rangle).
\end{align*}
Similarly, $R b^\varepsilon \in \bd(L_2^\circ)$ and $z^\varepsilon - d \in K - d \subseteq R L_2$ yield
\begin{align*}
    \|T_\varepsilon(z^\varepsilon) + \varepsilon w - d\|_{L_2}
    & = \langle R b^\varepsilon, T_\varepsilon(z^\varepsilon) + \varepsilon w - d \rangle
    = \langle R b^\varepsilon, z^\varepsilon - d \rangle + R \varepsilon \langle b^\varepsilon, \mat z^\varepsilon + w \rangle
    \\
    & \leq \|z^\varepsilon - d\|_{L_2} + R \varepsilon \langle b^\varepsilon, \mat z^\varepsilon + w \rangle
    \leq R (1 + \varepsilon \langle b^\varepsilon, \mat z^\varepsilon + w \rangle).
\end{align*}
Putting these estimates into \eqref{eq:main_ineq1}, we arrive at
\[
    1 + \varepsilon \langle a^\varepsilon, \mat y^\varepsilon + v \rangle
    \leq 1 + \varepsilon \langle b^\varepsilon, \mat z^\varepsilon + w \rangle,
\]
which simplifies to
\begin{equation}
\label{eq:main_ineq2}
    \langle a^\varepsilon, \mat y^\varepsilon + v \rangle
    \leq \langle b^\varepsilon, \mat z^\varepsilon + w \rangle.
\end{equation}

Now, since $y^\varepsilon, z^\varepsilon \in \bd(K)$, $a^\varepsilon \in \bd((K-c)^\circ)$, and $b^\varepsilon \in \bd(\frac{1}{R} L_2^\circ)$ for all $\varepsilon > 0$,
where all three boundaries are compact,
there exists a sequence with $\varepsilon \to 0^+$ such that $y^\varepsilon \to y$, $z^\varepsilon \to z$, $a^\varepsilon \to a$, and $b^\varepsilon \to b$ for some $y,z \in \bd(K)$, $a \in \bd((K-c)^\circ)$, and $b \in \bd(\frac{1}{R} L_2^\circ)$.
We verify that the vectors $y,z,a,b$ satisfy all properties required in (i).
First, we observe that
\[
    \frac{y-c}{r}
    = \lim_{\varepsilon \to 0^+} \frac{T_\varepsilon(y^\varepsilon) + \varepsilon v - c}{r_\varepsilon}
    \in \bd(L_1)
        \quad \text{and} \quad
    \frac{z-d}{R}
    = \lim_{\varepsilon \to 0^+}
    \frac{T_\varepsilon(z^\varepsilon) + \varepsilon w - d}{R_\varepsilon} \in \bd(L_2).
\]
Therefore, $y$ is a common boundary point of $K$ and $r L_1 + c$, whereas $z$ is a common boundary point of $K$ and $R L_2 + d$.
Second, we note that
\[
    \langle a, y - c \rangle
    = \lim_{\varepsilon \to 0^+} \langle a^\varepsilon, y^\varepsilon - c \rangle
    = \lim_{\varepsilon \to 0^+} \|y^\varepsilon - c\|_{K-c}
    = 1
\]
and
\[
    \langle b, z - d \rangle
    = \lim_{\varepsilon \to 0^+} \langle b^\varepsilon, T_\varepsilon(z^\varepsilon) + \varepsilon w - d \rangle
    = \lim_{\varepsilon \to 0^+} \frac{\|T_\varepsilon(z^\varepsilon) + \varepsilon w - d \|_{L_2}}{R}
    = \lim_{\varepsilon \to 0^+} \frac{R_\varepsilon}{R}
    = 1.
\]
Third, taking the limit in \eqref{eq:main_ineq2} leads to
\[
    \langle a, \mat y + v \rangle
    \leq \langle b, \mat z + w \rangle.
\]
This already verifies the desired conditions in (i).
Indeed, the matrix $A$ and the vectors $v$, $w$ have been chosen arbitrarily,
and the choice of the pair $(y, a)$ implies that $\langle a, x-c \rangle \leq \langle a, y-c \rangle=1$
and therefore $\langle a, x \rangle \leq \langle a, y \rangle$ for all $x \in K \cup rL_1$.
Since a similar fact holds for the pair $(z, b)$,
it follows that $(y, a)$ and $(z, b)$ are contact pairs of $K$ and $r L_1 + c$ and of $K$ and $R L_2 + d$, respectively, as required.

To establish (ii), we first note that we can apply (i) also for the matrix $A' = A^T$ and the vectors $v' = v - A^T c$, $w' = w - A^T d$
to obtain contact pairs $(y,a)$ and $(z,b)$ like above with
\[
	\langle a, A^T (y-c) + v \rangle
	= \langle a, A' y + v' \rangle
	\leq \langle b, A' z + w' \rangle
	= \langle b, A^T (z-d) + w \rangle
\]
and $\langle a, y-c \rangle = \langle b, z-d \rangle = 1$.
Moreover, we have
\begin{align*}
    \langle a, A^T (y-c) + v \rangle
    & = \tr(a^T A^T (y-c)) + \langle a, v \rangle
    = \tr(A^T (y-c) a^T) + \langle a, v \rangle
    \\
    & = \langle A, (y-c) a^T \rangle_F + \langle a, v \rangle
    = \langle (A|v), ((y-c) a^T|a) \rangle_F
\end{align*}
and analogously $\langle b, A^T (z-d) + w \rangle = \langle (A|w), ((z-d) b^T|b) \rangle_F$.
Thus,
\begin{equation}
\label{eq:Frobenius_ineq}
    \langle (A|v), ((y-c) a^T|a) \rangle_F
    \leq \langle (A|w), ((z-d) b^T|b) \rangle_F.
\end{equation}
\pagebreak

If we now define the sets $\mathcal{Y}$ and $\mathcal{Z}$ of $n \times (n+1)$ matrices as
\[
    \mathcal{Y}
    = \left\{ ((y-c) a^T|a) \in \R^{n \times (n+1)} : y \in \bd(K) \cap \bd(r L_1 + c), a \in \bd((K-c)^\circ), \langle a,y-c\rangle = 1 \right\}
\]
and
\[
    \mathcal{Z}
    = \left\{ ((z-d) b^T|b) \in \R^{n \times (n+1)} : z \in \bd(K) \cap \bd(R L_2 + d), b \in \bd \left( \frac{1}{R} L_2^\circ \right), \langle b, z - d \rangle = 1 \right\},
\]
then both of them are compact.
Since \eqref{eq:Frobenius_ineq} is satisfied for any choices of $A, v, w$ with appropriate choices of $y, z, a, b$,
the second condition in Lemma~\ref{lem:separation}
is violated for $\mathcal{Y}$ and $\mathcal{Z}$ if we choose the linear subspace
$\mathcal{U} := \{ (A|0) \in \R^{n \times (n+1)} : A \in \R^{n \times n} \}$.
Therefore, there exist some integers $N,M \geq 1$,
matrices $((y^1-c) (a^1)^T|a^1), \ldots, ((y^N-c) (a^N)^T|a^N) \in \mathcal{Y}$,
$((z^1-d) (b^1)^T|b^1), \ldots, ((z^M-d) (b^M)^T|b^M) \in \mathcal{Z}$,
and weights $\lambda_1, \ldots, \lambda_N, \mu_1, \ldots, \mu_M > 0$
with $\sum_{i=1}^N \lambda_i = \sum_{j=1}^M \mu_j = 1$ such that
\[
    \sum_{i=1}^N \lambda_i ((y^i-c) (a^i)^T|a^i)
    = \sum_{j=1}^M \mu_j ((z^j-c) (b^j)^T|b^j)
    \in \conv(\mathcal{Y}) \cap \conv(\mathcal{Z}) \cap \mathcal{U}.
\]
This is equivalent to
\[
    \sum_{i=1}^N \lambda_i (y^i-c) (a^i)^T
    = \sum_{j=1}^M \mu_j (z^j-d) (b^j)^T  
        \quad \text{and} \quad
    \sum_{i=1}^N \lambda_i a^i
    = \sum_{j=1}^M \mu_j b^j
    = 0.
\]
To get the exact form of the decomposition in the theorem, we compute
\begin{align*}
    \sum_{i=1}^N \lambda_i y^i (a^i)^T
    & = \sum_{i=1}^N \lambda_i y^i (a^i)^T - c \left( \sum_{i=1}^N \lambda_i a^i \right)^T
    = \sum_{i=1}^N \lambda_i (y^i-c) (a^i)^T
    = \sum_{j=1}^M \mu_j (z^j-d) (b^j)^T
    \\
    & = \sum_{j=1}^M \mu_j z^j (b^j)^T - d \left( \sum_{j=1}^M \mu_j b^j \right)^T
    = \sum_{j=1}^M \mu_j z^j (b^j)^T.
\end{align*}
Like before, we can argue that the $(y^i, a^i)$ are contact pairs of $K$ and $r L_1 + c$ for $i=1, \ldots, N$
and the $(z^j, b^j)$ are contact pairs of $K$ and $R L_2 + d$ for $j=1, \ldots, M$.

The last remaining parts are the equality $\sum_{i=1}^N \lambda_i \langle y^i, a^i \rangle = \sum_{j=1}^M \mu_j \langle z^j, b^j \rangle$,
and for an appropriate choice of contact pairs the upper bound $N+M \leq (n+1)^2$.
The former follows directly from comparing traces of the matrices in the decomposition.
Toward the latter, we have for any $((y-c) a^T|a) \in \mathcal{Y}$ that
\[
    \langle (I_n|0), ((y-c) a^T|a) \rangle_F
    = \tr( I_n^T (y-c) a^T )
    = \tr( a^T I_n (y-c) )
    = \langle a, y-c \rangle
    = 1
\]
and for any $((z-d) b^T|b) \in \mathcal{Z}$ that
\[
    \langle (I_n|0), ((z-d) b^T|b) \rangle_F
    = \tr (I_n^T (z-d) b^T )
    = \tr ( b^T I_n (z-d) )
    = \langle b, z-d \rangle
    = 1.
\]
Therefore, $\mathcal{Y}$ and $\mathcal{Z}$ are subsets of the affine subspace 
\[
	\mathcal{H} := \{ A \in \R^{n \times (n+1)} : \langle (I_n|0), A \rangle_F = 1 \}.
\]
Since $\conv(\mathcal{Y})$ and $\conv(\mathcal{Z})$ intersect in some point
that lies in $\mathcal{U}' := \mathcal{H} \cap \mathcal{U}$,
Remark~\ref{rem:separation} on Lemma~\ref{lem:separation} applied relative to the affine space $\mathcal{H}$
shows that already some subsets $\mathcal{Y}' \subseteq \mathcal{Y}$ and $\mathcal{Z}' \subseteq \mathcal{Z}$
with $|\mathcal{Y}'| + |\mathcal{Z}'| \leq 2 (\dim(\mathcal{H})+1) - \dim(\mathcal{U}') = 2 n (n+1) - (n^2-1) = (n+1)^2$
satisfy $\conv(\mathcal{Y}') \cap \conv(\mathcal{Z}') \cap \mathcal{U}' \neq \emptyset$.
With a similar computation to the above,
a point in this intersection can be converted into a decomposition like in the theorem with $N + M \leq (n+1)^2$.
This completes the proof.
\end{proof}

We close this section with a simple observation about the existence and behavior of Ader decompositions under affine transformations.
It clarifies which modifications of the matrices appearing in Ader decompositions are possible
by applying affine transformations to containment chains.
Note that the following lemma also shows that we can
replace the Euclidean ball with any ellipsoid in Theorems~\ref{thm:euclidean}~and~\ref{thm:euclidean2}
without losing the full equivalence.

\begin{lemma}
\label{lem:aff_decomp}
Let $K, L_1, L_2 \subseteq \R^n$ be convex bodies,
let $c,d \in \R^n$ be vectors,
and let $r, R > 0$ be reals
such that $r L_1 + c \subseteq K \subseteq R L_2 + d$.
Further, let $B \in \R^{n \times n}$ be an invertible matrix,
let $v \in \R^n$ be a vector,
and let $T: \R^n \to \R^n$ with $T(x) = Bx + v$ be an affine operator.
Then there exists an Ader decomposition for the containment chain $r L_1 + c \subseteq K \subseteq R L_2 + d$
with matrix $A \in \R^{n \times n}$ in the decomposition
if and only if there exists an Ader decomposition for the containment chain $T(r L_1 + c) \subseteq T(K) \subseteq T(R L_2 + d)$
with matrix $B A B^{-1}$ in the decomposition.
\end{lemma}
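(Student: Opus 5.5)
Here an Ader decomposition for a chain $rL_1+c\subseteq K\subseteq RL_2+d$ means contact pairs $(y^i,a^i)$, $(z^j,b^j)$ and positive weights with
\[
\sum_i \lambda_i y^i (a^i)^T=\sum_j \mu_j z^j (b^j)^T=:A,
\qquad
\sum_i\lambda_i a^i=\sum_j\mu_j b^j=0 .
\]
$A$ is the matrix of the decomposition. The plan is to transport the decomposition under $T$ explicitly and check each defining property. Since $T^{-1}(x)=B^{-1}x-B^{-1}v$ is of the same form, it suffices to prove one direction; the converse follows by applying that direction to $T^{-1}$, which sends $BAB^{-1}$ back to $B^{-1}(BAB^{-1})B=A$.

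\textbf{Step 1: contact pairs transform to contact pairs.} For a contact pair $(y,a)$ of $K$ and $rL_1+c$, set
\[
y'=T(y)=By+v,\qquad a'=B^{-T}a .
\]
Then $y'\in\bd(T(K))\cap\bd(T(rL_1+c))$, because $T$ is a homeomorphism. Also $a'\neq 0$. For any $x'=T(x)$ in the image of the larger body,
\[
\langle a',x'\rangle=\langle a,x\rangle+\langle a',v\rangle\le \langle a,y\rangle+\langle a',v\rangle=\langle a',y'\rangle .
\]
Hence $(y',a')$ is a contact pair of $T(K)$ and $T(rL_1+c)$. The same holds for the outer pairs $(z^j,b^j)$. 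Keep all weights $\lambda_i,\mu_j$ unchanged.

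\textbf{Step 2: the balancing conditions survive.} By linearity,
\[
\sum_i\lambda_i a'^i=B^{-T}\sum_i\lambda_i a^i=0,
\]
and likewise for the $b'^j$.

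\textbf{Step 3: compute the new matrix.} Using Step 2,
\[
\sum_i\lambda_i y'^i(a'^i)^T=B\Big(\sum_i\lambda_i y^i(a^i)^T\Big)B^{-1}+v\Big(\sum_i\lambda_i a^i\Big)^TB^{-1}=BAB^{-1}.
\]
The identical computation on the outer side gives $BAB^{-1}$ as well. So the two sides agree and the transported data form an Ader decomposition with matrix $BAB^{-1}$.

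\textbf{Main point of care.} The only nontrivial point is choosing the normals to transform by $B^{-T}$, and seeing that the translation term $v$ is absorbed exactly because the weighted normals sum to zero. The rest is bookkeeping.
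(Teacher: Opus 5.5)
Your proof is correct and follows the paper's argument essentially verbatim: you transport the contact pairs to $(T(y^i),(B^{-1})^T a^i)$ and $(T(z^j),(B^{-1})^T b^j)$, use the balancing conditions both to preserve $\sum_i\lambda_i a^i=\sum_j\mu_j b^j=0$ and to cancel the translation term $v\bigl(\sum_i\lambda_i a^i\bigr)^T B^{-1}$, and obtain the converse by applying the same direction to $T^{-1}$. Your explicit checks that $a'\neq 0$ and that common boundary points are preserved because $T$ is a homeomorphism fill in small details the paper leaves implicit.
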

\begin{proof}
We only need to show that if an Ader decomposition exists for $r L_1 + c \subseteq K \subseteq R L_2 + d$ with matrix $A$,
then an Ader decomposition exists for $T(r L_1 + c) \subseteq T(K) \subseteq T(R L_2 + d)$ with matrix $B A B^{-1}$,
as the converse can be obtained in the same way by applying $T^{-1}$.
Now, let contact pairs $(y^1,a^1), \ldots, (y^N,a^N)$ of $K$ and $r L_1 + c$,
contact pairs $(z^1,b^1), \ldots (z^M,b^M)$ of $K$ and $R L_1 + d$,
and weights $\lambda_1, \ldots, \lambda_N, \mu_1, \ldots, \mu_M > 0$ satisfy
\[
	\sum_{i=1}^N \lambda_i y^i (a^i)^T
    = \sum_{j=1}^M \mu_j z^j (b^j)^T = A
		\quad \text{and} \quad
	\sum_{i=1}^N \lambda_i a^i
    = \sum_{j=1} M \mu_j b^j
    = 0.
\]
For $i = 1, \ldots, N$ and $x \in K \cup (r L_1 + c)$, we have by definition
\begin{align*}
	\langle T(x), (B^{-1})^T a^i \rangle
	& = \langle B^{-1} (B x + v), a^i \rangle
	= \langle x, a^i \rangle + \langle B^{-1} v, a^i \rangle
	\\
	& \leq \langle y^i, a^i \rangle + \langle B^{-1} v, a^i \rangle
	= \langle T(y^i), (B^{-1})^T a^i \rangle.
\end{align*}
Thus, $(T(y^i), (B^{-1})^T a^i)$ is a contact pair of $T(K)$ and $T(r L_1 + c)$.
Similarly, it follows for $j = 1, \ldots, M$ that $(T(z^j), (B^{-1})^T z^j)$ is a contact pair of $T(K)$ and $T(R L_2 + d)$.
Lastly,
\[
	\sum_{i=1}^N \lambda_i ((B^{-1})^T a^i)
	= (B^{-1})^T \left( \sum_{i=1}^N \lambda_i a^i \right)
	= 0
	= (B^{-1})^T \left( \sum_{j=1}^M \mu_j b^j \right)
	= \sum_{j=1}^M \mu_j ((B^{-1})^T b^j)
\]
and
\begin{align*}
	\sum_{i=1}^N \lambda_i (T(y^i)) ((B^{-1})^T a^i)^T
	= B \left( \sum_{i=1}^N \lambda_i y^i (a^i)^T \right) B^{-1} + v \left( \sum_{i=1}^N \lambda_i a^i \right)^T B^{-1}
	= B A B^{-1}
	\\
	= B \left( \sum_{j=1}^M \mu_j z^j (b^j)^T \right) B^{-1} + v \left( \sum_{j=1}^M \mu_j b^j \right)^T B^{-1}
	= \sum_{j=1}^M \mu_j (T(z^j)) ((B^{-1})^T b^j)^T
\end{align*}
yield an Ader decomposition for $T(r L_1 + c) \subseteq T(K) \subseteq T(R L_2 + d)$ with matrix $B A B^{-1}$.\qedhere\pagebreak
\end{proof}

\section{Applications for Decomposition-Type Results}
\label{sec:decompositions}

Before we move on to the sufficiency of the Ader decomposition for optimality when ellipsoids are involved,
we present some applications of Theorem~\ref{thm:generaldecomp} in this section.
Our goal is to show how the freedom of having independent inner and outer approximating convex bodies in the theorem
can be used to obtain optimality conditions for problems not inherently connected to affinely-optimal containment chains or even affinity.

Our first such application is a new proof of the following folklore result
giving a necessary condition for the optimal containment under homothety.
Let us point out that these conditions are known to also be sufficient (see, e.g., \cite[Theorem~$2.3$]{brandenbergkoenig}),
though we shall not consider this here.

\begin{corollary}
Let $K \subseteq \R^n$ be a compact set and let $C \subseteq \R^n$ be a convex body.
Suppose that $v \in \R^n$ is a vector and $R \geq 0$ is a real
such that $K \subseteq  R C + v$
and $K$ is not contained in $R' C + v'$ for any real
$0 \leq R' < R$ and vector $v' \in \R^n$.
Then there exist outer normals $a^1, \ldots, a^N \in \R^n \setminus \{0\}$ of common supporting hyperplanes of $K$ and $R C + v$
such that $0 \in \conv\{a^1, \ldots, a^N\}$.
\end{corollary}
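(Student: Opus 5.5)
The plan is to rerun the perturbation argument from the proof of Theorem~\ref{thm:generaldecomp}, keeping only the translation part and only the outer containment. The reason is that minimality of $R$ under homothety is the only optimality available here; it says nothing against general linear maps. With $A=0$ and $v=0$, the inner side of \eqref{eq:main_ineq2} vanishes, and only optimality against translations is used. We then finish with ordinary hyperplane separation in $\R^n$ (the case $\mathcal{U}=\R^n$, $U^\perp=\{0\}$ of Lemma~\ref{lem:separation}) instead of the matrix-space version.

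First dispose of $R=0$. Then $K=\{v\}$ is a single point and $RC+v=\{v\}$. Every non-zero $a$ is an outer normal of a common supporting hyperplane, so $a^1=a$, $a^2=-a$ gives $0\in\conv\{a^1,a^2\}$.

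For $R>0$, fix $q\in\inte(C)$ and put $L:=C-q$ and $d:=v+Rq$, so that $0\in\inte(L)$ and $K\subseteq RL+d$ with $R$ minimal over all translates. For $w\in\R^n$ and $\varepsilon>0$, let $R_\varepsilon$ be minimal with $K\subseteq R_\varepsilon L+d-\varepsilon w$. By hypothesis $R_\varepsilon\ge R$. Choose $z^\varepsilon\in K$ with $\|z^\varepsilon+\varepsilon w-d\|_L=R_\varepsilon$, and choose $b^\varepsilon\in\bd(\frac1R L^\circ)$ with $\langle Rb^\varepsilon, z^\varepsilon+\varepsilon w-d\rangle=R_\varepsilon$. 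Exactly as in the proof of Theorem~\ref{thm:generaldecomp}, $z^\varepsilon-d\in RL$ gives
\[
R\le R_\varepsilon\le R\bigl(1+\varepsilon\langle b^\varepsilon,w\rangle\bigr),
\]
hence $\langle b^\varepsilon,w\rangle\ge0$.

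Compactness of $K$ and of $\bd(\frac1R L^\circ)$ yields limits $z\in K$ and $b$ along a subsequence. These satisfy $\langle b,z-d\rangle=1$, $\langle b,x-d\rangle\le1$ for all $x\in RL$ (and in particular for all $x\in K$), and $\langle b,w\rangle\ge0$. Thus $z\in K\cap\bd(RC+v)$, and $b\neq0$ is an outer normal of a hyperplane supporting both $K$ (at $z$) and $RC+v$.

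Now let $\mathcal{B}$ be the set of all such normalized normals $b\in\bd(\frac1R L^\circ)$ for which some $z\in K$ has $\langle b,z-d\rangle=1$. This set is compact, since it is a closed subset of a compact set (use compactness of $K$). Suppose $0\notin\conv(\mathcal{B})$. Strict separation gives $w$ with $\langle b,w\rangle<0$ for all $b\in\mathcal{B}$, contradicting the previous step. Hence $0\in\conv(\mathcal{B})$, and Carath\'eodory's theorem yields finitely many $a^1,\dots,a^N\in\mathcal{B}$, with $N\le n+1$, and $0\in\conv\{a^1,\dots,a^N\}$.

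The main point needing care is the limiting step. One has to check that the limit normal $b$ really supports $K$ at a point of $K$ while also supporting $RC+v$; the normalization via $q$ ensures $b\neq0$. No interior assumption on $K$ is needed, because only the outer containment is perturbed.
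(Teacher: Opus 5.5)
Your proof is correct, but it takes a different route from the paper. The paper does not rerun the perturbation argument. It uses Theorem~\ref{thm:generaldecomp} as a black box:
\begin{itemize}
\item After arranging $0 \in \inte(C)$, it picks a Euclidean ball $B \subseteq \inte(C)$ and forms the convex body $K' = \conv(K \cup (RB+v))$.
\item It checks that the degenerate chain $K'-v \subseteq K'-v \subseteq RC$ is affinely-optimal, since any chain $r'(K'-v)+c \subseteq T(K'-v) \subseteq R'C+d$ forces $K \subseteq \frac{R'}{r'}C + t$.
\item The balancing condition on the outer normals in Theorem~\ref{thm:generaldecomp}~(ii) then supplies normals with $0$ in their convex hull.
\item Because $RB+v \subseteq \inte(RC+v)$, the corresponding supporting hyperplanes touch $K$ itself.
\end{itemize}

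You instead extract exactly the part of that theorem's proof that matters here: the case $A=0$ with only the outer body translated. You then replace the matrix-space separation of Lemma~\ref{lem:separation} by ordinary strict separation of $0$ from the compact set $\conv(\mathcal{B})$ in $\R^n$.

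The paper's route serves the purpose of that section. It shows how a clever choice of the inner body $L_1$ makes Theorem~\ref{thm:generaldecomp} yield optimality conditions for a problem with no affine freedom at all. Your route has three advantages:
\begin{itemize}
\item It is self-contained and elementary.
\item It needs no auxiliary body and no interior assumptions.
\item It avoids transferring contact points of $K'$ with $RC+v$ back to the possibly non-convex set $K$; a priori such points only lie in $\conv(K)$.
\end{itemize}
It also gives $N \le n+1$ via Carath\'eodory at no extra cost.

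Two small slips should be fixed. First, $\langle b, x-d\rangle \le 1$ holds for $x \in RL+d$, not $x \in RL$. Second, the identity $\langle b, z-d\rangle = 1$ requires $R_\varepsilon \to R$. This follows from your bound $R \le R_\varepsilon \le R(1+\varepsilon\langle b^\varepsilon, w\rangle)$ together with the boundedness of the $b^\varepsilon$, but you should say so.
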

\begin{proof}
By adjusting $v$ if necessary, we may assume without loss of generality that $0 \in \inte(C)$.
If $R=0$, then $K$ and $R C + v$ are singletons
and we can choose any non-zero vector as $a^1$ with $a^2 = -a^1$ for the condition in the corollary.
If $R > 0$, then we choose some origin-centered Euclidean ball $B$ of positive radius contained in $\inte(C)$
and define a convex body $K' \subseteq \R^n$ as $K' = \conv(K \cup (R B+v))$.
Clearly, we have $v \in \inte(K')$.

We claim that the containment chain
$K'-v \subseteq K'-v \subseteq R C$ is affinely-optimal.
Indeed, if $T \in \R^{n \times n}$ is an invertible linear operator, $c, d \in \R^n$ are vectors, and $R', r' > 0$ are reals such that 
\[
	r' (K'-v) + c
	\subseteq T(K'-v)
	\subseteq R' C + d,
\]
then also $K \subseteq \frac{R'}{r'} C + t$ for $t = \frac{1}{r'}(d-c)+v$.
It follows from the assumption in the corollary that $\frac{R'}{r'} \geq R$,
so the considered containment chain is indeed affinely-optimal.
Since $0 \in \inte(K'-v) \cap \inte(C)$, we can apply Theorem~\ref{thm:generaldecomp} to obtain,
in particular, the existence of some contact pairs $(y^1,a^1), \ldots, (y^N,a^N)$ of $K'$ and $R C + v$ satisfying
\[
    \sum_{i=1}^N \lambda_i a^i= 0
\]
for some weights $\lambda_1, \ldots, \lambda_N > 0$.
Since $R B+v \subseteq \inte(R C + v)$, it is clear that the contact points $y^i$ must all belong to $K$.
Thus, the hyperplane induced by the equality $\langle x, a^i \rangle = \langle y^i, a^i \rangle$
supports both $K$ and $R C + v$ for all $i=1, \ldots, N$, completing the proof.
\end{proof}

As outlined in the introduction,
estimates on the Banach--Mazur distance have historically often been established
by using tools such as the John decomposition in Theorem~\ref{thm:glmp} for volume-extremal approximations.
Our second application of Theorem~\ref{thm:generaldecomp} below
indicates that volume-extremal approximations fit into the framework of affinely-optimal containment chains
as a special case when the convex bodies in the chain are chosen appropriately,
thereby recovering Theorem~\ref{thm:glmp} at least under regularity assumptions.
In this sense, our approach may be viewed as a generalization of this classical theory,
reversing the usual perspective in which
volume-extremal positions are used to study other approximation problems.

\begin{corollary}
Let $K \subseteq L \subseteq \R^n$ be convex bodies
such that $K \subseteq L$ and $K$ has maximal volume among all its affine transformations contained in $L$.
Suppose additionally that $K$ is smooth.
Then there exist $N \leq n^2+n$ contact pairs $(y^1, a^1), \ldots, (y^N, a^N)$ of $K$ and $L$
as well as weights $\lambda_1, \ldots, \lambda_N > 0$ such that for any $x \in \R^n$,
\[
    \sum_{i=1}^{N} \lambda_i \langle x, a^i \rangle y^i
    = x
        \quad \text{ and } \quad
    \sum_{i=1}^{N} \lambda_i a^i
    = 0.
\]
\end{corollary}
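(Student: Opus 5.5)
The plan is to realize the volume-maximal position as an affinely-optimal containment chain whose inner and middle bodies coincide, and to rerun the perturbation argument from the proof of Theorem~\ref{thm:generaldecomp} only on the outer side. The inner side will be handled by a determinant bound instead.

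After a translation I assume $0 \in \inte(K)$, and I consider the chain $K \subseteq K \subseteq L$, i.e.\ $L_1 = K$, $r = 1$, $c = 0$, $L_2 = L$, $R = 1$, $d = 0$. This chain is affinely-optimal. Indeed, if $r' K + c' \subseteq T(K) \subseteq R' L + d'$, then comparing volumes in the first inclusion gives $r'^n \le |\det T|$. In the second inclusion, $\frac{1}{R'}(T(K) - d')$ is an affine image of $K$ contained in $L$, so volume-maximality gives $|\det T| \le R'^n$. Together $R'/r' \ge 1$. So Theorem~\ref{thm:generaldecomp} applies.

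The catch is that the inner pairs it produces are pairs of $K$ with itself, so its decomposition need not have $I_n$ on the left. I therefore go back into its proof and strengthen the inner estimate. Fix $A \in \R^{n\times n}$ and $w \in \R^n$, put $T_\varepsilon = I_n + \varepsilon A$, and let $R_\varepsilon$ be minimal with $T_\varepsilon(K) \subseteq R_\varepsilon L - \varepsilon w$. The volume argument above gives
\[
R_\varepsilon \ge (\det T_\varepsilon)^{1/n} = 1 + \tfrac{\varepsilon}{n}\tr(A) + o(\varepsilon).
\]
This replaces the inner quantity $\|T_\varepsilon(y^\varepsilon)+\varepsilon v - c\|_{L_1}/r$ in \eqref{eq:main_ineq1}. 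The outer estimate is exactly as before: $R_\varepsilon \le 1 + \varepsilon\langle b^\varepsilon, A z^\varepsilon + w\rangle$ with $\langle b^\varepsilon, T_\varepsilon(z^\varepsilon) - \varepsilon w\rangle = R_\varepsilon$. Combining, dividing by $\varepsilon$ and passing to a convergent subsequence as in the proof, I get for all $A, w$ a contact pair $(z,b)$ of $K$ and $L$ with $\langle b, z\rangle = 1$ and
\[
\langle (A|w), (z b^T | b)\rangle_F \ge \langle (A|w), (\tfrac1n I_n | 0)\rangle_F .
\]
The transpose substitution $A' = A^T$ is used here as in the proof of Theorem~\ref{thm:generaldecomp}.

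Let $\mathcal{Z}$ be the compact set of all such $(zb^T|b)$. The inequality says that no linear functional strictly separates $(\frac1n I_n|0)$ from $\mathcal{Z}$. By the hyperplane separation theorem (the degenerate case of Lemma~\ref{lem:separation} with $\mathcal{Y}$ a single point), $(\frac1n I_n|0) \in \conv(\mathcal{Z})$. Writing this as a convex combination and rescaling the weights by $n$ gives $\sum_i \lambda_i y^i (a^i)^T = I_n$ and $\sum_i \lambda_i a^i = 0$, with $(y^i,a^i)$ contact pairs of $K$ and $L$. This is precisely the claimed identity $\sum_i \lambda_i \langle x, a^i\rangle y^i = x$.

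For the count, every element of $\mathcal{Z}$ lies in the affine hyperplane $\mathcal{H}$ from the proof of Theorem~\ref{thm:generaldecomp}, and $(\frac1n I_n|0)$ lies there too. Carathéodory's theorem in $\mathcal{H}$, which has dimension $n(n+1)-1$, then yields $N \le n(n+1)$.

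The main obstacle is justifying the limit passage, and this is where I expect smoothness of $K$ to enter. One needs that the limits $z$ of the $z^\varepsilon$ are genuine common boundary points and that the $b^\varepsilon$ converge to a normal of $L$ supporting $K$ at $z$. Smoothness of $K$ guarantees a unique outer normal at each boundary point, so the limiting $b$ is forced to be the normal of $K$ at $z$, making $(z,b)$ a contact pair of $K$ and $L$. I also need to check that $R_\varepsilon \to 1$ so that the normalization $\langle b, z\rangle = 1$ survives the limit. If the direct argument turns out not to need smoothness there, I would instead use smoothness to identify the inner pairs in the decomposition of Theorem~\ref{thm:generaldecomp}~(ii) with unique normals of $K$ and then transfer them.
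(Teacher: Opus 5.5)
Your argument is correct. It never uses smoothness, so it proves the statement in full generality, and in fact recovers Theorem~\ref{thm:glmp} with $N\le n(n+1)$. It takes a genuinely different route from the paper.

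The paper does not reopen the proof of Theorem~\ref{thm:generaldecomp}. It proceeds as follows:
\begin{enumerate}[(a)]
\item It takes a volume-maximal simplex $S\subseteq K$ as the inner body and checks, by two volume comparisons, that $S\subseteq K\subseteq L$ is affinely-optimal.
\item It normalizes $S$ to a regular simplex inscribed in $\B^n$, and uses smoothness of $K$ to see that the only contact pairs of $S$ and $K$ are $(u^i,\rho u^i)$ at the vertices.
\item The balancing condition then forces equal weights, so the inner side of the Ader decomposition from Theorem~\ref{thm:generaldecomp}~(ii) is a multiple of $I_n$.
\item Lemma~\ref{lem:aff_decomp} transfers this back, and $N+(n+1)\le(n+1)^2$ gives the count.
\end{enumerate}

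You instead keep the trivial inner body and replace the inner contact-pair estimate by the determinant bound $R_\varepsilon\ge(\det T_\varepsilon)^{1/n}$. Because the inner side is then the fixed point $(\frac1n I_n|0)$, plain point-versus-compact-set separation suffices. The balancing condition comes for free from its zero last column, and Carath\'eodory in $\mathcal{H}$ gives $N\le n(n+1)$. This is essentially the classical variational proof of the John decomposition.

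Your route buys generality and self-containedness. The paper's route buys the conceptual point the corollary is meant to make: the volume-extremal decomposition drops out of Theorem~\ref{thm:generaldecomp} used as a black box, at the price of a regularity assumption.

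You should drop the hedge in your last paragraph, because the limit passage needs no smoothness. Squeezing $R_\varepsilon$ between $(\det T_\varepsilon)^{1/n}$ and $1+O(\varepsilon)$ gives $R_\varepsilon\to1$. So the limit $z$ lies in $K\cap\bd L\subseteq\bd K$. Moreover, $b\in L^\circ$ with $\langle b,z\rangle=1$ says exactly that $\langle b,x\rangle\le\langle b,z\rangle$ for all $x\in L$. That is the definition of a contact pair of $K$ and $L$, and uniqueness of normals plays no role.

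Your fallback would not have worked anyway. For the chain $K\subseteq K\subseteq L$, every boundary point of $K$ is an inner contact point, and smoothness does not force the inner side of the decomposition to be a multiple of $I_n$. This is precisely why the paper switches to a simplex.

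A minor point: with $T_\varepsilon(K)\subseteq R_\varepsilon L-\varepsilon w$, the relevant point is $T_\varepsilon(z^\varepsilon)+\varepsilon w$, not $T_\varepsilon(z^\varepsilon)-\varepsilon w$. This is harmless since $w$ is arbitrary.
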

\begin{proof}
Let $S \subseteq K$ be a volume-maximal simplex contained in $K$.
We claim that the containment chain $S \subseteq K \subseteq L$ is affinely-optimal.
Indeed, let $T: \R^n \to \R^n$ be an invertible linear operator and let $v, w \in \R^n$ be vectors such that
\[
	rS + v
    \subseteq T(K)
	\subseteq R L + w
\]
for some $r, R>0$.
Since $\frac{1}{R}(T(K)-w) \subseteq L$, we obtain from the volume-maximality of $K$ in $L$ that 
\[
	\vol(K) \geq \vol \left (  \frac{1}{R}(T(K)-w)\right ) = \frac{|\det(T)|}{R^n} \vol(K),
\]
i.e., $|\det(T)| \leq R^n$.
Similarly, since $S \subseteq K$ has been chosen to be of maximal volume and $T^{-1}(rS+v) \subseteq K$, we have
\[
	\vol(S) \geq \vol \left ( T^{-1}(r S + v) \right ) = \frac{r^n}{|\det(T)|} \vol(S),
\]
so that $|\det(T)| \geq r^n$.
It follows that $R \geq r$ and, in consequence, the affine-optimality of $S \subseteq K \subseteq L$.

Now, let $T: \R^n \to \R^n$ be an affine operator taking $S$ to a regular simplex inscribed in $\B^n$,
meaning $T(S) = \conv \{u^1, \ldots, u^{n+1}\}$
with $u^i \in \bd(\B^n)$ and $\langle u^i, u^j \rangle = -\frac{1}{n}$ for all $i,j = 1, \ldots, n+1$, $i \neq j$.
Clearly, the containment chain $T(S) \subseteq T(K) \subseteq T(L)$ is still affinely-optimal
and $T(S)$ is a volume-maximal simplex contained in $T(K)$.
In particular, for every $i = 1, \ldots, n+1$ there exists a hyperplane perpendicular to $u^i$
supporting both $T(S)$ and $T(K)$ at $u^i$;
otherwise, there would be a simplex of larger volume inside $T(K)$.
Since $T(K)$ is smooth by our assumption, this is the unique supporting hyperplane at $u^i$ for $T(K)$.
Furthermore, the $u^i$ are the only common boundary points of $T(S)$ and $T(K)$.
Indeed, we would otherwise obtain a hyperplane $H$ supporting $T(S)$ and $T(K)$ at some $x \in \bd(T(S)) \setminus \{u^1, \ldots, u^{n+1} \}$
that would necessarily support $T(S)$ at some vertex $u^i$ as well.
In particular, $H$ would also support $T(K)$ at $u^i$, yet it cannot be perpendicular to $u^i$ by $x \in H \cap \inte(\B^n)$,
contradicting the smoothness of $T(K)$ at $u^i$.
Altogether, any contact pair of $T(S)$ and $T(K)$ has the form $(u^i, \rho u^i)$ for some $i = 1, \ldots, n+1$ and $\rho > 0$.

Now, an Ader decomposition taken from Theorem~\ref{thm:generaldecomp} applied to $T(S) \subseteq T(K) \subseteq T(L)$
can be written as
\begin{equation}
\label{eq:simplex_johndecomp}
    \sum_{j=1}^{n+1} \mu_j u^j (u^j)^T
    = \sum_{i=1}^N \lambda_i y^i (a^i)^T 
        \quad \text{and} \quad
    \sum_{j=1}^{n+1} \mu_j u^j
    = \sum_{i=1}^N \lambda_i a^i
    = 0
\end{equation}
\pagebreak

\noindent for contact pairs $(y^1,a^1), \ldots, (y^N,a^N)$ of $T(K)$ and $T(L)$
as well as weights $\lambda_1, \ldots, \lambda_N > 0$ and $\mu_1, \ldots, \mu_{n+1} \geq 0$ with $\sum_{j=1}^{n+1} \mu_j = n$.
The affine independence of the $u^i$ shows that every linear combination of them equaling $0$ has equal coefficients,
so that $\mu_1 = \ldots = \mu_{n+1} = \frac{n}{n+1}$.
Moreover, for any fixed $i = 1, \ldots, n+1$ we have
\[
	\sum_{j=1}^{n+1} \langle u^i, u^j \rangle u^j
	= u^i - \frac{1}{n} \sum_{j \neq i} u^j
	= u^i + \frac{1}{n}u^i
	= \frac{n+1}{n} u^i.
\]
Thus, the equality $\sum_{j=1}^{n+1} \mu_j \langle x, u^j \rangle u^j = x$ is satisfied for all $x = u^i$, $i = 1, \ldots, n+1$,
and consequently for all $x \in \R^n$.
Therefore, \eqref{eq:simplex_johndecomp} yields 
\[
    \sum_{i=1}^{N} \mu_i \langle x, a^i \rangle y^i = x
\]
for every $x \in \R^n$.
In summary, there exists an Ader decomposition for $T(S) \subseteq T(K) \subseteq T(L)$ with the identity matrix in the decomposition.
Lemma~\ref{lem:aff_decomp} shows that this remains true after returning to $S \subseteq K \subseteq L$ via $T^{-1}$,
giving the claimed decomposition.
Lastly, Theorem~\ref{thm:generaldecomp} yields some choice with $N+n+1 \leq (n+1)^2$, that is, $N \leq n^2+n$.
\end{proof}

\begin{remark}
The regularity assumptions in the above corollary can be relaxed to the existence of a smooth or strictly convex body $C$
with $K \subseteq C \subseteq L$ by following a similar but slightly more involved approach.
We omit the details since Theorem~\ref{thm:glmp} already establishes the result in full generality.
\end{remark}

Our third and final application of Theorem~\ref{thm:generaldecomp} in this section
concerns the minimization of the diameter-inradius-ratio under affinity.
For convex bodies $K, L \subseteq \R^n$ with $L$ origin-symmetric,
the \cemph{diameter} of $K$ with respect to $L$ is given by $D(K,L) = \max \{ \| x-y \|_L : x, y \in K \}$
and the \cemph{inradius} by $r(K,L) = \max \{ \rho \geq 0 : \rho L + c \subseteq K, c \in \R^n \}$.
Since $\frac{D(K,L)}{r(K,L)}$ can in general be arbitrarily large,
the goal is to first minimize this ratio by affinely transforming $K$ and then asking for upper bounds.
This problem has been studied recently in \cite{brandenberggrundbacherellipsoids},
where sharp bounds for both general norms and the Euclidean norm are established.
Moreover, a possible connection to the inequality $d_G(K,L) \leq n$
by Gordon, Litvak, Meyer, and Pajor from \cite{glmp} and its conjectured equality case \cite{jimeneznaszodi} is pointed out,
with the inequality being a direct consequence of the bounds in \cite{brandenberggrundbacherellipsoids} when $L$ is an ellipsoid or a parallelotope.
Nonetheless, \cite{brandenberggrundbacherellipsoids} avoids characterizing the optimal transformations for $K$
and instead relies on volume-extremal positions.
To possibly help with the connection to the bound $d_G(K,L) \leq n$\linebreak in more general cases,
we provide a characterization of the optimal positions for the diameter-inradius-ratio in the following.
In the last part of its proof, we shall already apply Theorem~\ref{thm:euclidean2},
which is verified in Section~\ref{sec:euclidean}.

For the corollary below, we say that $(y,z,b)$ is a \cemph{diameter triple} of $K$ with respect to $L$ if $y,z \in K$, $b \in L^\circ$, and
\[
	\langle y-z, b \rangle
	= \| y-z \|_L
	= D(K,L).
\]
We denote the \cemph{Minkowski sum} of sets $X,Y \subseteq \R^n$ as $X + Y = \{ x + y : x \in X, y \in Y \}$.

\begin{corollary}
Let $K,L \subseteq \R^n$ be convex bodies with $L$ origin-symmetric.
If
\begin{equation}
\label{eq:D-r-ratio_min}
    \frac{D(K,L)}{r(K,L)}
    = \min_{\det(A) \neq 0} \frac{D(AK,L)}{r(AK,L)},
\end{equation}
then for any vector $c \in \R^n$ with $r(K,L) L + c \subseteq K$,
there exist integers $N, M \geq 1$,
contact pairs $(x^1,a^1), \ldots, (x^N,a^N)$ of $K$ and $r(K,L) L + c$,
diameter triples $(y^1,z^1,b^1), \ldots,\allowbreak (y^M,z^M,b^M)$ of $K$ with respect to $L$,
as well as weights $\lambda_1, \ldots, \lambda_N, \mu_1, \ldots, \mu_M > 0$
such that for any $x \in \R^n$,
\[
	\sum_{i=1}^N \lambda_i \langle x, a^i \rangle x^i
    = \sum_{j=1}^M \mu_j \langle x, b^j \rangle (y^j-z^j)
		\quad \text{and} \quad
	\sum_{i=1}^N \lambda_i a^i
    = \sum_{j=1}^M \mu_j b^j
    = 0.
\]
If $L$ is an ellipsoid, then the converse is also true, in the sense that if for any $c \in \R^n$ with $r(K,L) L + c \subseteq K$
there exist contact pairs, diameter triples, and weights like above,
then \eqref{eq:D-r-ratio_min} holds.
\end{corollary}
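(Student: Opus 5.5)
The plan for the necessity part is to rerun the perturbation argument from the proof of Theorem~\ref{thm:generaldecomp}, with the outer containment replaced by the diameter. The diameter is translation invariant, so only one translation vector appears.

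\textbf{Step 1 (first-order condition).} Fix $c$ with $r L + c \subseteq K$, where $r = r(K,L)$, and write $D = D(K,L)$. Fix $A \in \R^{n \times n}$ and $v \in \R^n$, and set $T_\varepsilon = I_n + \varepsilon A$. As before, $r(T_\varepsilon K, L)$ is bounded below via a contact pair. Choose $x^\varepsilon \in \bd(K)$ with $T_\varepsilon x^\varepsilon + \varepsilon v - c \in \bd(r_\varepsilon L)$, where $r_\varepsilon$ is the largest radius with $r_\varepsilon L + c - \varepsilon v \subseteq T_\varepsilon(K)$. Choose $a^\varepsilon \in \bd((K-c)^\circ)$ with $\langle a^\varepsilon, x^\varepsilon - c\rangle = 1$. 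Exactly as in that proof,
\[
r_\varepsilon \geq r\bigl(1+\varepsilon\langle a^\varepsilon, A x^\varepsilon + v\rangle\bigr).
\]
For the diameter, choose a diameter triple $(y^\varepsilon, z^\varepsilon, \tilde b^\varepsilon)$ of $T_\varepsilon K$. Pulling it back gives $y^\varepsilon, z^\varepsilon \in K$ and $b^\varepsilon = \tilde b^\varepsilon / D$ with
\[
D(T_\varepsilon K, L) \leq D\bigl(1+\varepsilon\langle b^\varepsilon, A(y^\varepsilon - z^\varepsilon)\rangle\bigr).
\]
Optimality in \eqref{eq:D-r-ratio_min} then gives $\langle a^\varepsilon, Ax^\varepsilon + v\rangle \le \langle b^\varepsilon, A(y^\varepsilon-z^\varepsilon)\rangle$. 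Passing to a convergent subsequence, I expect to obtain:
\begin{itemize}
\item a contact pair $(x,a)$ of $K$ and $rL+c$ with $\langle a, x-c\rangle = 1$;
\item a diameter triple $(y,z,Db)$ with $\langle b, y-z\rangle = 1$;
\item the inequality $\langle a, Ax+v\rangle \le \langle b, A(y-z)\rangle$.
\end{itemize}

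\textbf{Step 2 (separation).} As in the proof of Theorem~\ref{thm:generaldecomp}, I would replace $A$ by $A^T$ and shift $v$ to absorb $c$. This rewrites the inequality as
\[
\langle (A|v), ((x-c)a^T|a)\rangle_F \le \langle (A|w), ((y-z)b^T|0)\rangle_F \quad\text{for every } w.
\]
The choice of $w$ is irrelevant here because the last column on the right is $0$. Define the compact sets
\[
\mathcal{Y} = \{((x-c)a^T|a)\}, \qquad \mathcal{Z} = \{((y-z)b^T|0)\},
\]
and let $\mathcal{U} = \{(A|0)\}$. Condition (ii) of Lemma~\ref{lem:separation} then fails, so $\conv(\mathcal{Y}) \cap \conv(\mathcal{Z}) \cap \mathcal{U} \neq \emptyset$. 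This yields
\[
\sum_i \lambda_i (x^i - c)(a^i)^T = \sum_j \mu_j (y^j - z^j)(b^j)^T \quad\text{and}\quad \sum_i \lambda_i a^i = 0 .
\]
Removing $c$ as before gives the first identity of the corollary.

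\textbf{Step 3 (balancing the $b^j$).} Since $L$ is symmetric, $(z,y,-b)$ is also a diameter triple, and it produces the same rank-one matrix $(z-y)(-b)^T = (y-z)b^T$. Replacing each triple $j$ by the two triples $(y^j,z^j,b^j)$ and $(z^j,y^j,-b^j)$, each with weight $\mu_j/2$, leaves the matrix sum unchanged and forces $\sum_j \mu_j b^j = 0$. This completes necessity.

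\textbf{Step 4 (converse for ellipsoids; the main obstacle).} An affine map transforms the decomposition in the manner of Lemma~\ref{lem:aff_decomp}, so we may assume $L = \B^n$. Suppose some linear $T$ strictly improves the ratio. I would use polar decomposition to take $T$ symmetric positive definite, and then interpolate along $T_t = \exp(t \log T)$. The aim is to show that $t \mapsto \log\bigl(D(T_tK,\B^n)/r(T_tK,\B^n)\bigr)$ is convex, in the spirit of the mean ellipsoid result from the introduction. For this I would express both quantities through containment chains with a middle ellipsoid: $r_t E_t + c_t \subseteq K$ and $K - K \subseteq D_t E_t$ with $E_t = T_t^{-1}\B^n$. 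Alternatively, I would apply Theorem~\ref{thm:euclidean2} to the chain obtained from $\B^n$ sandwiched between $\frac1r(K-c)$ and $\frac1D(K-K)$ after polarity. Convexity would give a negative one-sided derivative at $t=0$, that is, a symmetric $S = \log T$ such that every admissible $c$, $(x,a)$ and $(y,z,b)$ satisfy $\langle a, Sx + v\rangle > \langle b, S(y-z)\rangle$ for a suitable $v$. Taking the Frobenius product of the assumed decomposition with $(S|v)$ would then give a contradiction.

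The delicate points are twofold. First, the optimal $c$ may vary along the path, which is why the hypothesis is stated for every admissible $c$. Second, the argument must genuinely reduce to the mean-ellipsoid and Theorem~\ref{thm:euclidean2} machinery; this is the step I expect to require the most care.
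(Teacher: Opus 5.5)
\paragraph{Necessity (Steps 1--3).} This part is correct, but it takes a different route from the paper. You rerun the perturbation argument of Theorem~\ref{thm:generaldecomp} by hand on $K$. Because the diameter is translation invariant, the separation step gives no balancing of the $b^j$, and you repair this with the symmetrization $(y,z,b)\mapsto(z,y,-b)$, which is valid. The paper instead observes that
\[
\frac{1}{D(K,L)}(K-K)\subseteq L\subseteq \frac{1}{r(K,L)}(K-c),
\]
and shows that this chain is affinely-optimal if and only if \eqref{eq:D-r-ratio_min} holds. Indeed, if $r'(K-K)+c'\subseteq AL\subseteq R'(K-c)+d'$ with $r'$ maximal and $R'$ minimal, then origin-symmetry of $K-K$ and $AL$ lets one take $c'=0$, so $R'/r'=D(A^{-1}K,L)/r(A^{-1}K,L)$. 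The paper then reads the decomposition directly off Theorem~\ref{thm:generaldecomp}. Inner contact pairs of $L$ and $\frac1D(K-K)$ are exactly diameter triples, and $\sum_j\mu_jb^j=0$ comes for free from the inner translation freedom.

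\paragraph{The gap: the converse (Step 4).} Step~4 is a plan, not a proof. The convexity of $t\mapsto\log\bigl(D(T_tK,\B^n)/r(T_tK,\B^n)\bigr)$ along $T_t=T^t$ is in fact true. It follows from Theorem~\ref{thm:general_mean_ellipsoid}, because $T^{-t}\B^n$ is exactly the mean-ellipsoid family and $K-K\subseteq D_tT^{-t}\B^n$ is origin-centered. But you do not prove this.

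More seriously, the passage from a negative right derivative to a single pair $(S,v)$ with $\langle a,Sx+v\rangle>\langle b,S(y-z)\rangle$ for \emph{all} contact pairs and diameter triples is unjustified. The quantity $r(T_tK,\B^n)$ is itself a max--min (over centers, then over contact directions), so its one-sided derivative is not simply $\min\langle a,Sx+v\rangle$ at a fixed $c$. Extracting a uniform strict separation would need a Danskin-type sensitivity analysis that you do not supply. The proof of Theorem~\ref{thm:euclidean2} avoids this entirely, using the explicit $A_\lambda=I_n-S_\lambda$, $v_\lambda$, $w_\lambda$ and a $C\lambda^2$ error term.

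Your alternative, applying Theorem~\ref{thm:euclidean2} ``after polarity'', names the right tool. However, no polarity is needed, and without the equivalence above nothing connects that theorem to \eqref{eq:D-r-ratio_min}.

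\paragraph{How to close it.} The equivalence above is the missing idea. With it, for an ellipsoid $L$ (normalized to $\B^n$ via Lemma~\ref{lem:aff_decomp}), you translate the given decomposition into condition (iii) of Theorem~\ref{thm:euclidean2} for that chain:
\begin{enumerate}[(i)]
\item A diameter triple $(y,z,b)$ gives the inner contact point $\frac1D(y-z)$, with $b$ a positive multiple of it.
\item A contact pair $(x,a)$ gives the outer contact point $\frac1r(x-c)$, with $a$ a positive multiple of it.
\item The balance $\sum_i\lambda_ia^i=0$ lets you replace $x^i(a^i)^T$ by $(x^i-c)(a^i)^T$ in the sum.
\end{enumerate}
Theorem~\ref{thm:euclidean2} then gives affine-optimality of the chain, hence \eqref{eq:D-r-ratio_min}. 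A decomposition for a single admissible $c$ already suffices, so the moving-center issue you worry about never arises.
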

\begin{proof}
We note the well-known fact that $D(K,L)$ is the smallest real 
$\rho \geq 0$ with $K-K \subseteq \rho L$.
Therefore, we have for any $c \in \R^n$ with $r(K,L) L + c \subseteq K$ that
\begin{equation}
\label{eq:D-r-ratio_chain}
	\frac{1}{D(K,L)} (K - K)
	\subseteq L
	\subseteq \frac{1}{r(K,L)} (K-c).
\end{equation}
We claim that this containment chain is affinely-optimal precisely when \eqref{eq:D-r-ratio_min} is satisfied.

Indeed, let $A \in \R^{n \times n}$ be an invertible matrix
and let $r > 0$ be maximal and $R > 0$ be minimal such that there exist vectors $c', d' \in \R^n$ with
\[
	r(K - K) + c'
	\subseteq A L
	\subseteq R (K-c) + d'.
\]
The origin-symmetry of $K-K$ and $L$ shows that also $r (K-K) \subseteq A L$ and therefore
\[
	r (A^{-1} K - A^{-1} K)
	\subseteq L
	\subseteq R (A^{-1} K - A^{-1} c) + A^{-1} d'.
\]
The maximality of $r$ and minimality of $R$ now yield $D(A^{-1} K,L) = \frac{1}{r}$, $r(A^{-1} K,L) = \frac{1}{R}$, and
\[
	\frac{R}{r}
	= \frac{D(A^{-1} K,L)}{r(A^{-1} K,L)}.
\]
If \eqref{eq:D-r-ratio_chain} is affinely-optimal, then by definition $\frac{D(K,L)}{r(K,L)} \leq \frac{R}{r}$,
and the arbitrariness of $A$ implies \eqref{eq:D-r-ratio_min}.
Conversely, if \eqref{eq:D-r-ratio_chain} is not affinely-optimal,
then it is possible to choose $A$, $r$, and $R$ such that $\frac{R}{r} < \frac{D(K,L)}{r(K,L)}$,
and \eqref{eq:D-r-ratio_min} is violated.

With our claim verified, we see that \eqref{eq:D-r-ratio_min} implies by Theorem~\ref{thm:generaldecomp}
the existence of an Ader decomposition based on the contact pairs of $L$ and $\frac{1}{D(K,L)} (K-K)$,
and of $L$ and $\frac{1}{r(K,L)} (K-c)$.
The contact pairs of $L$ and $\frac{1}{D(K,L)} (K-K)$ can be represented as diameter triples of $K$ with respect to $L$,
so the existence of the claimed decomposition follows.

If $L$ is an ellipsoid, then Theorem~\ref{thm:euclidean2} shows that
the existence of an Ader decomposition for the containment chain \eqref{eq:D-r-ratio_chain} is sufficient for its affine-optimality.
By the above explained equivalence, \eqref{eq:D-r-ratio_min} follows in this case.
\end{proof}

\section{The Mean Ellipsoid Theorem}
\label{sec:meanellipsoid}

As outlined in the introduction,
our proof of the sufficiency of the Ader decomposition for characterizing the affine-optimality of containment chains
when ellipsoids are appropriately involved
relies on a technical result that allows us to take (geometric) means of ellipsoids while keeping certain containment relations.
In essence, our goal for this section is to show that
if two possibly non-concentric ellipsoids are contained in (contain) a convex body $K \subseteq \R^n$,
then a certain mean of these ellipsoids is also contained in (also contains) $K$.
We handle these inner (outer) ellipsoids in the following.
Our method can be considered an extension of \cite[Lemma~$2.8$]{grundbacherkobos},
which dealt solely with origin-centered ellipsoids.
As we shall see below, taking general positions into account can be handled in a straightforward way for the inner ellipsoids,
but adds significant technicality in the case of outer ellipsoids.

In the origin-centered case, the (geometric) mean ellipsoids that we use
can be obtained from the more general theory of geometric means of convex bodies
(see, e.g., \cite{milmanrotem,brandenberggrundbachermeans}).
However, since we require a more detailed analysis for our setting than provided by this theory,
we shall provide all required definitions explicitly in the following.
For an ellipsoid given as
\begin{equation}
\label{eq:ellipsoid}
    E
    = \left\{ x \in \R^n : \sum_{i=1}^n \frac{\langle x, v^i  \rangle^2}{\alpha_i^{2}} \leq 1 \right\}
\end{equation}
for some orthonormal basis $v^1, \ldots, v^n \in \R^n$ and reals $\alpha_1, \ldots, \alpha_n > 0$,
the mean ellipsoid $E_\lambda$ of $\B^n$ and $E$ for weight $\lambda \in [0,1]$ is given by
\[
	E_\lambda
    = \left\{ x \in \R^n : \sum_{i=1}^n \frac{\langle x, v^i \rangle^2}{\alpha_i^{2 \lambda}} \leq 1 \right\}.
\]
In particular, $E_0 = \B^n$ is the Euclidean ball and $E_1=E$.
Moreover, we associate a linear subspace $\CV_E$ with $E$ via
\begin{equation}
\label{eq:def_v}
    \CV_E
    = \lin\{ v^i : \alpha_i=1, i=1,\ldots,n\}.
\end{equation}
Clearly, the intersection $E_{\lambda} \cap \CV_E$
is the standard Euclidean ball in $\CV_E$ for any $\lambda \in [0, 1]$.

In the proofs below, we write $h_K: \R^n \to R$ for the \cemph{support function} of a convex body $K \subseteq \R^n$.
Moreover, we make repeated use of the weighted geometric-mean arithmetic-mean inequality,
which states for $x,y > 0$ and $\lambda \in [0,1]$ that
\[
	x^{1-\lambda} y^\lambda
	\leq (1-\lambda) x + \lambda y.
\]
When $\lambda \in (0,1)$, equality holds if and only if $x=y$.
In particular, if $\lambda \in [0,1]$ and $\alpha>0$,
then applying the above for $x := 1$ and $y := \alpha$ yields
\begin{equation}
\label{eq:ineq_alpha1}
	\alpha^\lambda
	\leq 1 - \lambda + \lambda \alpha,
\end{equation}
whereas applying it for $x := \alpha^\lambda$ and $y := \alpha^{\lambda-1}$ yields
\begin{equation}
\label{eq:ineq_alpha2}
	1 - (1-\lambda) \alpha^\lambda
	\leq \lambda \alpha^{\lambda - 1}.
\end{equation}
In both instances, equality holds if and only if $\lambda \in \{0,1\}$ or $\alpha = 1$.

\begin{lemma}
\label{lem:inner_mean_ellipsoid}
Let vectors $v^1, \ldots, v^n \in \R^n$ form an orthonormal basis, let $c_0, c_1 \in \R^n$ be vectors,
let $\alpha_1, \ldots, \alpha_n > 0$ and $\lambda \in [0, 1]$ be reals,
and let $E \subseteq \R^n$ be an ellipsoid as in \eqref{eq:ellipsoid}.
Then
\[
	E_\lambda + c_{\lambda} \subseteq (1-\lambda) (E_0 + c_0) + \lambda (E_1 + c_1),
\]
where $c_{\lambda} = (1-\lambda) c_0 + \lambda c_1$.
If $\lambda \in (0,1)$, then additionally
\[
	(E_\lambda + c_{\lambda}) \cap \bd( (1-\lambda) (E_0 + c_0) + \lambda (E_1 + c_1) )
	= \bd(E_\lambda + c_{\lambda}) \cap (\CV_E + c_{\lambda}).
\]
\end{lemma}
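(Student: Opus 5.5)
Since translations are additive under Minkowski sums, the first claim reduces immediately to the origin-centered statement $E_\lambda \subseteq (1-\lambda)\B^n + \lambda E$. I will compare support functions. In the basis $v^1,\ldots,v^n$ write $u\in\R^n$ with coordinates $u_i = \langle u, v^i\rangle$. Then $h_{E_\lambda}(u) = \big(\sum_i \alpha_i^{2\lambda} u_i^2\big)^{1/2}$, and the right-hand side has support function $(1-\lambda)\|u\| + \lambda \big(\sum_i \alpha_i^2 u_i^2\big)^{1/2}$. So it suffices to prove
\[
	\Big(\sum_i \alpha_i^{2\lambda} u_i^2\Big)^{1/2} \leq \Big(\sum_i u_i^2\Big)^{\frac{1-\lambda}{2}} \Big(\sum_i \alpha_i^{2} u_i^2\Big)^{\frac{\lambda}{2}} \leq (1-\lambda)\|u\| + \lambda \Big(\sum_i \alpha_i^2 u_i^2\Big)^{1/2}.
\]
The first inequality is Hölder's inequality with exponents $\frac{1}{1-\lambda}$ and $\frac{1}{\lambda}$, applied to $u_i^{2(1-\lambda)}$ and $(\alpha_i^2u_i^2)^\lambda$. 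The second is the weighted AM-GM inequality.

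For the boundary statement with $\lambda\in(0,1)$, write $P = (1-\lambda)(E_0+c_0)+\lambda(E_1+c_1)$, which after translation by $c_\lambda$ becomes $(1-\lambda)\B^n+\lambda E$. A point $x\in E_\lambda$ lies on $\bd(P)$ if and only if some unit $u$ satisfies $\langle u,x\rangle = h_P(u)$. Since $\langle u, x\rangle \le h_{E_\lambda}(u)\le h_P(u)$, this forces equality throughout.
\begin{itemize}
\item Equality in the first step means $x \in \bd(E_\lambda)$ with $u$ an outer normal there.
\item Equality in AM-GM forces $\|u\| = (\sum_i\alpha_i^2u_i^2)^{1/2}$.
\item Equality in Hölder forces $u_i^2$ to be proportional to $\alpha_i^2u_i^2$ on the support of $u$, hence $\alpha_i$ constant on that support.
\end{itemize}
Combining the last two, $\alpha_i=1$ whenever $u_i\neq 0$, so $u\in\CV_E$. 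The outer normal of $E_\lambda$ at $x$ is proportional to $(x_i/\alpha_i^{2\lambda})_i$, so $x_i=0$ whenever $\alpha_i\neq1$. This gives $x\in \CV_E$.

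Conversely, take $x\in\bd(E_\lambda)\cap\CV_E$. Then $\|x\|=1$ and $x$ is supported in coordinates where $\alpha_i = 1$. Choose $u=x$. Then $h_{E_\lambda}(x) = \|x\|^2 = 1$, and both $\|u\|$ and $(\sum_i\alpha_i^2u_i^2)^{1/2}$ equal $1$, so $h_P(u)=1=\langle u,x\rangle$ and $x\in\bd(P)$. Translating by $c_\lambda$ gives the stated identity.

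The main obstacle is the equality analysis. Hölder's equality case needs care when some $u_i=0$, and I must also handle the degenerate case $\CV_E=\{0\}$, where the claimed intersection is empty. The argument above covers this, because $u\in\CV_E=\{0\}$ contradicts $\|u\|=1$.
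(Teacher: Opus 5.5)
Your proof is correct. It follows the paper's overall plan: reduce to $E_\lambda \subseteq (1-\lambda)\B^n + \lambda E$, compare support functions, and read off the contact set from the equality case. The inequality chain in the middle is different, though.

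The paper first applies the weighted AM--GM inequality coordinatewise, $\alpha_i^\lambda \leq 1-\lambda+\lambda\alpha_i$. This bounds $h_{E_\lambda}$ by the support function of the coaxial ellipsoid with semi-axes $1-\lambda+\lambda\alpha_i$, and the paper then finishes with Minkowski's inequality. You instead first use H\"older, which is just the log-convexity of the sum of exponentials $\lambda \mapsto \sum_i \alpha_i^{2\lambda} u_i^2$. This gives the pointwise bound $h_{E_\lambda} \leq h_{\B^n}^{1-\lambda} h_E^{\lambda}$, and only then do you apply AM--GM to the two norms.

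Each route buys something different. Yours gives, as a by-product, that $E_\lambda$ lies in the Wulff shape of $h_{\B^n}^{1-\lambda} h_E^{\lambda}$ (the log-Minkowski combination of $\B^n$ and $E$). That fits naturally with the geometric-mean viewpoint of this section. The paper's route makes the equality analysis one step shorter: equality in the coordinatewise AM--GM alone already forces $\alpha_i = 1$ wherever $\langle a, v^i\rangle \neq 0$. You have to combine two equality cases, H\"older's (constancy of $\alpha_i$ on the support of $u$) and AM--GM's (the constant is $1$), and you do this correctly.

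You also make explicit two steps the paper calls immediate. One is passing from $u \in \CV_E$ to $x \in \CV_E$ through the normal $(x_i/\alpha_i^{2\lambda})_i$ of $E_\lambda$. The other is the reverse inclusion via the test direction $u = x$.

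The only cosmetic omission is that your H\"older exponents degenerate at $\lambda \in \{0,1\}$. There the inclusion is trivially an equality anyway.
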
 \begin{proof}
For the claimed set inclusion,
it suffices to prove $E_{\lambda} \subseteq (1-\lambda) E_0 + \lambda E_1$.
This is equivalent to the inequality
\begin{align*}
	\left( \sum_{i=1}^n \alpha_i^{2 \lambda} \langle a, v^i \rangle^2 \right)^{\frac{1}{2}}
	& = h_{E_{\lambda}}(a)
	\leq  (1-\lambda) h_{E_0}(a) + \lambda h_{E_1}(a)
	\\
	& = (1-\lambda) \left( \sum_{i=1}^n \langle a, v^i \rangle^2 \right)^{\frac{1}{2}}
		+ \lambda  \left( \sum_{i=1}^n \alpha_i^2 \langle a, v^i \rangle^2 \right)^{\frac{1}{2}}
\end{align*}
being satisfied for all $a \in \R^n$.
This, in turn, is a direct consequence of \eqref{eq:ineq_alpha1} and the Minkowski inequality,
which together yield
\begin{align*}
	\left( \sum_{i=1}^n \alpha_i^{2 \lambda} \langle a, v^i \rangle^2 \right)^{\frac{1}{2}}
	& \leq \left( \sum_{i=1}^{n} \left( (1 - \lambda) \langle a, v^i \rangle
						+ \lambda \alpha_i \langle a, v^i \rangle \right)^2 \right)^{\frac{1}{2}}
	\\
	& \leq (1-\lambda) \left( \sum_{i=1}^n \langle a, v^i \rangle^2 \right)^{\frac{1}{2}}
			+ \lambda \left( \sum_{i=1}^n \alpha_i^{2} \langle a, v^i \rangle^2 \right)^{\frac{1}{2}}.
\end{align*}

If $\lambda \in (0,1)$, then equality in the estimate above
is equivalent to having $\alpha_i = 1$ for all $i =1, \ldots, n$ with $\langle a, v^i \rangle \neq 0$ by the equality case in \eqref{eq:ineq_alpha1}.
Therefore, $a$ is an outer normal of a hyperplane that
supports $E_\lambda$ and $(1-\lambda) E_0 + \lambda E_1$ at a common boundary point
if and only if $a$ belongs to the subspace $\CV_{E}$.
This immediately implies the claim about the boundaries,
also after a translation by the vector $c_{\lambda}$.
\end{proof}

In our applications of the above lemma in the proofs below,
we always have some convex body $K$ containing both $E_0+c_0$ and $E_1+c_1$
(where we assume that one of these ellipsoids is a Euclidean unit ball, but possibly centered arbitrarily).
By convexity of $K$, it follows for any $\lambda \in (0,1)$ that
\[
	E_\lambda + c_{\lambda}
	\subseteq (1-\lambda) (E_0 + c_0) + \lambda (E_1 + c_1)
	\subseteq \conv( (E_0+c_0) \cup (E_1+c_1)  )
	\subseteq K.
\]
The lemma already gives some information about the common boundary points of $E_\lambda + c_{\lambda}$ and $K$,
as they also need to be boundary points of $(1-\lambda) (E_0 + c_0) + \lambda (E_1 + c_1)$.
In fact, we can infer even more.
The outer normal $a$ of a common supporting hyperplane at such a common boundary point has to satisfy
\begin{align*}
	(1-\lambda) h_{E_0+c_0}(a) + \lambda h_{E_1+c_1}(a)
	& = h_{(1-\lambda) (E_0 + c_0) + \lambda (E_1 + c_1)}(a)
	\\
	& = h_{\conv( (E_0+c_0) \cup (E_1+c_1) )}(a)
	= \max \{ h_{E_0+c_0}(a), h_{E_1+c_1}(a) \}.
\end{align*}
If $\lambda \in (0,1)$, this is possible only when $h_{E_0+c_0}(a) = h_{E_1+c_1}(a)$.
Since we already know that $a$ belongs to the linear subspace $\CV_E$,
the equality $h_{E_0+c_0}(a) = h_{E_1+c_1}(a)$ simplifies to $\langle a, c_0\rangle = \langle a, c_1 \rangle$.
Therefore, the space where the vector $a$ can come from is reduced further down to
\begin{equation}
\label{eq:stronger_inner_boundary_condition}
	\CV_E \cap (\lin \{ c_0-c_1 \})^\perp.
\end{equation}

Next, we continue with a result similar to the above lemma, but for outer ellipsoids.
Taking an appropriate mean turns out to be more technically involved in this case,
as the inclusion
\[
	E_\lambda + c_{\lambda}
	\supseteq (E_0+c_0) \cap (E_1+c_1)
\]
may fail for all choices of $c_\lambda$ on the segment $[c_0,c_1]$, let alone the specific choice in the previous lemma.
A situation where this happens in $\R^2$ is given by the example
$v^1 = (1,0)$, $v^2 = (0,1)$, $\alpha_1 = 2$, $\alpha_2 = \frac{1}{4}$, $c_0 = (0,0)$, $c_1 = (\frac{7}{5},\frac{4}{5})$,
and $\lambda = \frac{1}{2}$ (see Figure~\ref{fig:example_outer}).
It can be checked that $x = (-\frac{3}{5},\frac{4}{5}) \in (E_0 + c_0) \cap (E_1 + c_1)$,
yet $x \notin E_\lambda + c_\lambda$ for any $c_\lambda \in [c_0,c_1]$.

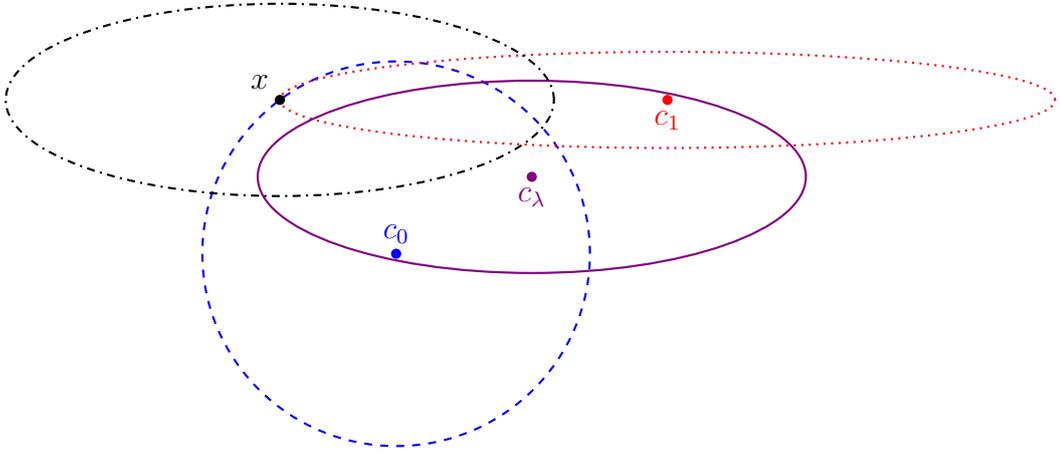
\begin{figure}[ht]
\centering
\begin{tikzpicture}[scale=2.55]
\draw[thick, blue, dashed] (0,0) circle(1);
\fill[blue] (0,0) circle(0.75pt) node[anchor=south] {$c_0$};

\draw[thick, red, dotted] (1.4,0.8) ellipse (2cm and 0.25cm);
\fill[red] (1.4,0.8) circle(0.75pt) node[anchor=north] {$c_1$};

\draw[thick, violet] (0.7,0.4) ellipse (1.4142135623731cm and 0.5cm);
\fill[violet] (0.7,0.4) circle(0.75pt) node[anchor=north] {$c_\lambda$};

\draw[thick, dashdotted] (-0.6,0.8) ellipse (1.4142135623731cm and 0.5cm);
\fill (-0.6,0.8) circle(0.75pt) node[anchor=south east] {$x$};
\end{tikzpicture}
\caption{An example showing that the ``natural candidate'' for the outer mean ellipsoid may not satisfy the desired containment relation:
$E_0 + c_0$ (blue, dashed), $E_1 + c_1$ (red, dotted), $E_\lambda + c_\lambda$ (purple, solid), $E_\lambda + x$ (black, dash-dotted).}
\label{fig:example_outer}
\end{figure}

As Figure~\ref{fig:example_outer} suggests,
the main difficulty lies in finding an appropriate translation of the mean ellipsoid.
We shall see that this can still be done explicitly,
but the formulas are more complicated than in the previous result.
To this end, we require the following technical lemma.
It is the key to finding the appropriate translations of the outer mean ellipsoids in the general case.

\begin{lemma}
\label{lem:technical}
Let $\alpha > 0$, $\lambda \in [0,1]$,
and $\mu \in [0,1]$ be reals with $1 - (1-\lambda) \alpha^\lambda \leq \mu \leq \lambda \alpha^{\lambda - 1}$.
Then for any $x,y \in \R$,
\[
	\frac{((1-\mu) x + \mu y)^2}{\alpha^{2 \lambda}}
	\leq (1-\lambda) x^2 + \lambda \frac{y^2}{\alpha^2}.
\]
Equality holds if and only if $\lambda \in \{0,1\}$, or $\alpha = 1$ and $x = y$, or $x = y = 0$.
\end{lemma}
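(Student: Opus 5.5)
The plan is to treat the claimed inequality as a statement about a quadratic form in $(x,y)$ and show it is positive semidefinite. Write $t = \alpha^{-2\lambda}$ and consider
\[
	Q(x,y) := (1-\lambda) x^2 + \lambda \frac{y^2}{\alpha^2} - t\,((1-\mu)x + \mu y)^2 .
\]
This is a binary quadratic form, and $Q \geq 0$ on $\R^2$ is equivalent to nonnegativity of the diagonal coefficients together with nonnegativity of the determinant. The diagonal entries are $(1-\lambda) - t(1-\mu)^2$ and $\lambda\alpha^{-2} - t\mu^2$, and the off-diagonal entry is $-t\mu(1-\mu)$.

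The edge cases are quick. If $\lambda = 0$, then the hypothesis forces $\mu = 0$ (the right bound is $0$, the left bound is $0$), and the inequality reads $x^2 \leq x^2$, with equality. If $\lambda = 1$, the bounds give $\mu = 1$, and the inequality reads $y^2/\alpha^2 \leq y^2/\alpha^2$. So I will assume $\lambda \in (0,1)$.

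For $\lambda \in (0,1)$, I would avoid the determinant computation and use the weighted Cauchy--Schwarz (or convexity) inequality directly:
\[
	((1-\mu)x + \mu y)^2 = \Bigl(\sqrt{1-\lambda}\,x \cdot \tfrac{1-\mu}{\sqrt{1-\lambda}} + \tfrac{\sqrt{\lambda}\,y}{\alpha} \cdot \tfrac{\mu\alpha}{\sqrt{\lambda}}\Bigr)^2
	\leq \Bigl((1-\lambda)x^2 + \lambda\tfrac{y^2}{\alpha^2}\Bigr)\Bigl(\tfrac{(1-\mu)^2}{1-\lambda} + \tfrac{\mu^2\alpha^2}{\lambda}\Bigr).
\]
It therefore suffices to show
\[
	f(\mu) := \frac{(1-\mu)^2}{1-\lambda} + \frac{\mu^2\alpha^2}{\lambda} \leq \alpha^{2\lambda}
\]
for all $\mu$ in the given interval. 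Since $f$ is a convex quadratic in $\mu$, it is enough to check the two endpoints $\mu_- = 1-(1-\lambda)\alpha^\lambda$ and $\mu_+ = \lambda\alpha^{\lambda-1}$. This interval is nonempty precisely by \eqref{eq:ineq_alpha2}.

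Endpoint check. At $\mu_+$, the second term of $f$ is $\lambda\alpha^{2\lambda}$. The first term is $(1-\lambda\alpha^{\lambda-1})^2/(1-\lambda)$, and we need it to be at most $(1-\lambda)\alpha^{2\lambda}$. That is, $|1-\lambda\alpha^{\lambda-1}| \leq (1-\lambda)\alpha^\lambda$, i.e.
\[
	1-(1-\lambda)\alpha^\lambda \leq \lambda\alpha^{\lambda-1}
	\quad\text{and}\quad
	\lambda\alpha^{\lambda-1} \leq 1+(1-\lambda)\alpha^\lambda .
\]
The first is \eqref{eq:ineq_alpha2}. For the second, I would use $\lambda\alpha^{\lambda-1} - (1-\lambda)\alpha^\lambda \leq 1$, which follows from weighted AM--GM applied to $\alpha^{-1}$ and $1$ (in the form $\alpha^{\lambda-1} \leq \lambda\alpha^{-1} + 1-\lambda$, after suitable rearrangement); this is the step where I expect some fiddling to find the right substitution. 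At $\mu_-$, the first term of $f$ is exactly $(1-\lambda)\alpha^{2\lambda}$, and the second requires $(1-(1-\lambda)\alpha^\lambda)^2\alpha^2/\lambda^2 \leq \alpha^{2\lambda}$, i.e. $|1-(1-\lambda)\alpha^\lambda| \leq \lambda\alpha^{\lambda-1}$. One side is again \eqref{eq:ineq_alpha2}, and the other is the analogous AM--GM inequality. Also $\mu \in [0,1]$ keeps everything consistent.

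Equality analysis. For $\lambda \in (0,1)$, equality forces equality in Cauchy--Schwarz and $f(\mu) = \alpha^{2\lambda}$, unless $x = y = 0$ (where the left factor vanishes). By strict convexity, $f(\mu) = \alpha^{2\lambda}$ at some $\mu$ in the interval requires the interval to touch an equality case of the endpoint inequalities. These reduce to equality in \eqref{eq:ineq_alpha2} or its companion, hence $\alpha = 1$. In that case $\mu_- = \mu_+ = \lambda$, and Cauchy--Schwarz proportionality yields $x = y$. The main obstacle I anticipate is precisely this equality bookkeeping, together with verifying the companion AM--GM inequality cleanly.
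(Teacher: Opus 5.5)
Your Cauchy--Schwarz reduction is sound, and in fact sharp: for $\lambda\in(0,1)$ the inequality holds for all $x,y$ if and only if $f(\mu)\le\alpha^{2\lambda}$. This is a clean repackaging of the paper's discriminant condition. The gap is in the endpoint check. You test $f$ at $\mu_-=1-(1-\lambda)\alpha^\lambda$ and $\mu_+=\lambda\alpha^{\lambda-1}$, but these need not lie in $[0,1]$, and the companion inequalities you would need there are false.

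At $\mu_+$ you need $\lambda\alpha^{\lambda-1}\le 1+(1-\lambda)\alpha^\lambda$. For $\lambda=\tfrac12$ and $\alpha=\tfrac1{100}$, the left side is $5$ and the right side is $1.05$. Then $\mu_+=5$ and $f(5)>32>0.01=\alpha^{2\lambda}$. Because your reduction is sharp, the lemma's inequality genuinely fails at $\mu=5$, so no choice of AM--GM substitution can rescue this step. Symmetrically, at $\mu_-$ the companion $(1-\lambda)\alpha^\lambda\le 1+\lambda\alpha^{\lambda-1}$ fails for large $\alpha$. Your remark that $\mu\in[0,1]$ keeps everything consistent is exactly where that hypothesis has to be used. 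Your equality analysis, which appeals to equality in the companion inequality, inherits the same flaw.

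The repair is to work on the actual feasible set $[\max\{0,\mu_-\},\min\{1,\mu_+\}]$ and use convexity of $f$ there.

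\begin{itemize}
\item If $\mu_+\le 1$, then $1-\lambda\alpha^{\lambda-1}\ge 0$. The absolute value in your condition is then superfluous, and only \eqref{eq:ineq_alpha2} is needed, with equality iff $\alpha=1$.
\item If $\mu_+>1$, the relevant endpoint is $\mu=1$, where $f(1)=\alpha^2/\lambda$. From $\lambda\alpha^{\lambda-1}>1$ you get $\alpha^{2-2\lambda}<\lambda^2<\lambda$, so $f(1)<\alpha^{2\lambda}$ strictly.
\item The lower end is symmetric. If $\mu_-<0$, then $(1-\lambda)\alpha^\lambda>1$, and $f(0)=\frac{1}{1-\lambda}<\frac{1}{(1-\lambda)^2}<\alpha^{2\lambda}$.
\end{itemize}

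This is what the paper does: it lists $\{0,1,\mu_-,\mu_+\}$ as candidate endpoints and notes that $\mu_+$ is an endpoint only when $\lambda\alpha^{\lambda-1}\le 1$. That sign condition is what justifies its square-root step. With the strict inequalities at $0$ and $1$ and strict convexity of $f$, $f(\mu)=\alpha^{2\lambda}$ forces $\alpha=1$, hence $\mu=\lambda$. Your Cauchy--Schwarz proportionality argument then yields $x=y$, or $x=y=0$, as claimed.
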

\begin{proof}
We note that the left-hand side in the claimed inequality, when considered as a function of $\mu$ for fixed $x, y \in \R$, is convex.
Since a convex function defined on a compact interval attains its maximum in one of the endpoints,
it is enough to consider the cases of $\mu \in \{0, 1, 1 - (1-\lambda) \alpha^\lambda, \lambda \alpha^{\lambda - 1} \}$.
Furthermore, the claimed inequality can be rewritten as
\begin{equation}
\label{eq:ineq_xy}
	\left ((1 - \lambda) \alpha^{2\lambda} - (1 - \mu)^2 \right ) x^2
		- 2\mu(1-\mu)xy
		+ \left (\lambda \alpha^{2\lambda-2} - \mu^2 \right ) y^2
	\geq 0.
\end{equation}
The coefficients of $x^2$ and $y^2$ are non-negative since
\begin{equation}
\label{eq:ineq_mu}
	(1 - \mu)^2
	\leq ( (1-\lambda) \alpha^\lambda )^2
	\leq (1-\lambda) \alpha^{2\lambda}
		\quad \text{and}	\quad
	\mu^2
	\leq ( \lambda \alpha^{\lambda-1} )^2
	\leq \lambda \alpha^{2\lambda-2}
\end{equation}
by the assumptions on $\mu$.
In particular, \eqref{eq:ineq_xy} holds for $x=0$ or $y=0$.
We may therefore divide by $y^2 > 0$ to obtain the equivalent inequality
\begin{equation}
\label{eq:ineq_t}
	\left( (1 - \lambda) \alpha^{2\lambda} - (1 - \mu)^2 \right) t^2
		- 2 \mu (1-\mu) t
		+ \left( \lambda \alpha^{2\lambda-2} - \mu^2 \right)
	\geq 0
\end{equation}
for $t=\frac{x}{y} \in \mathbb{R}$.
Since the leading coefficient is non-negative,
this is equivalent to the discriminant of the quadratic function in $t$ above being non-positive,
i.e.,
\[
	\left( (1 - \lambda) \alpha^{2\lambda} - (1 - \mu)^2 \right) \left( \lambda \alpha^{2\lambda-2} - \mu^2 \right)
	\geq  \mu^2 (1-\mu)^2.
\]
As already noted, the factors on the left-hand side are non-negative, so there is nothing to prove if $\mu=0$ or $\mu=1$.
Let us therefore consider the case of $\mu = \lambda \alpha^{\lambda - 1}$.
This situation occurs only when $\lambda \alpha^{\lambda - 1} \leq 1$,
as otherwise $1$ would be the upper endpoint of the intersection of intervals restricting $\mu$.
The desired inequality now rewrites as
\[
	\left( (1 - \lambda) \alpha^{2\lambda} - \left( 1 - \lambda \alpha^{\lambda-1} \right)^2 \right)
		\left( \lambda \alpha^{2\lambda-2} - \lambda^2 \alpha^{2\lambda - 2} \right)
	\geq \lambda^2 \alpha^{2\lambda - 2} \left( 1 - \lambda \alpha^{\lambda-1} \right)^2.
\]
Both sides equal $0$ if $\lambda = 0$.
Otherwise, dividing both sides by $\lambda \alpha^{2\lambda-2} > 0$ simplifies this to
\[
	\left( (1 - \lambda) \alpha^{2\lambda} - \left( 1 - \lambda \alpha^{\lambda-1} \right)^2 \right) (1 - \lambda)
	\geq \lambda\left (1 - \lambda \alpha^{\lambda-1} \right )^2,
\]
then collecting common summands to
\[
	(1 - \lambda) ^2 \alpha^{2\lambda}
	\geq \left (1 - \lambda \alpha^{\lambda-1} \right )^2,
\]
and finally taking square roots to
\[
	(1-\lambda) \alpha^{\lambda}
	\geq 1 - \lambda \alpha^{\lambda-1}.
\]
This is just the inequality \eqref{eq:ineq_alpha2},
which proves the claim for $\mu =  \lambda \alpha^{\lambda - 1}$.
Moreover, the claim for $\mu = 1 - (1-\lambda) \alpha^\lambda$
follows immediately from the above case applied to $\lambda ':=1-\lambda$ and $\alpha' :=\alpha^{-1}$
(we note that also the boundary conditions are corresponding to each other under this substitution).

It remains to verify the characterization of the equality case.
It is immediate to check that equality holds in all three claimed cases,
where we use that $\lambda \in \{0,1\}$ implies $\mu = \lambda$ by the assumptions on $\mu$.
We are left with proving the strict inequality in all other cases.
By the convexity of the left-hand side as a function in $\mu$,
we only need to verify the strictness for the four bounding values of $\mu$.
\pagebreak

First, note that for $\lambda \in (0,1)$,
both second inequalities in \eqref{eq:ineq_mu} are strict.
Consequently, \eqref{eq:ineq_xy} is strict for $x = 0$ and $y \neq 0$, or $x \neq 0$ and $y = 0$.
We may from now on suppose that $x, y$ are both non-zero since we would otherwise be in the third claimed equality case.

If $\alpha=1$, then the equality in \eqref{eq:ineq_alpha2} yields $\mu = \lambda = \lambda \alpha^{\lambda-1}$
and that the discriminant of the quadratic function in $t$ from \eqref{eq:ineq_t} equals $0$.
Thus, it has a unique real root.
It is easy to check that this unique root is $t=1$,
which corresponds to $x = y$.
In other words, \eqref{eq:ineq_xy} is strict for $x \neq y$.

If instead $\alpha \neq 1$, then \eqref{eq:ineq_alpha2} is strict,
which, by the above reasoning, means that the quadratic function in $t$ from \eqref{eq:ineq_t}
has negative discriminant for all four bounding values of $\mu$ and is therefore always strictly positive.
It follows that \eqref{eq:ineq_xy} is strict for non-zero $x, y$.
This concludes the proof.
\end{proof}

With the above technical lemma established, we are now ready to handle the means of outer ellipsoids.

\begin{lemma}
\label{lem:outer_mean_ellipsoid}
Let vectors $v^1, \ldots, v^n \in \R^n$ from an orthonormal basis,
let $d_0, d_1 \in \R^n$ be vectors,
let $\alpha_1, \ldots, \alpha_n > 0$ and $\lambda \in [0,1]$ be reals,
and let $E \subseteq \R^n$ be an ellipsoid as in \eqref{eq:ellipsoid}.
For $i = 1, \ldots, n$, let $\mu_i \in [0,1]$ with $1 - (1-\lambda) \alpha_i^\lambda \leq \mu_i \leq \lambda \alpha_i^{\lambda-1}$.
Then
\[
	(E_0 + d_0) \cap (E_1 + d_1)
	\subseteq E_{\lambda} + d_\lambda,
\]
where $d_\lambda = \sum_{i=1}^n ((1-\mu_i) \langle d_0, v^i \rangle + \mu_i \langle d_1, v^i \rangle) v^i$.
If $\lambda \in (0,1)$, then additionally
\[
	(E_0 + d_0) \cap (E_1 + d_1) \cap \bd( E_\lambda + d_\lambda )
	= \begin{cases}
		\bd( E_0 + d_0 ) \cap \bd( E_1 + d_1 ) \cap ( \CV_E + d_\lambda ), & \text{if } d_0 = d_1,
		\\
		\emptyset, & \text{if } d_0 \neq d_1.
	\end{cases}
\]
\end{lemma}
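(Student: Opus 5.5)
The plan is to work in the coordinates of the orthonormal basis $v^1,\ldots,v^n$ and reduce everything to Lemma~\ref{lem:technical} applied coordinatewise. For a point $x \in (E_0+d_0)\cap(E_1+d_1)$, write $s_i = \langle x - d_0, v^i\rangle$ and $t_i = \langle x - d_1, v^i\rangle$. Membership gives $\sum_i s_i^2 \le 1$ and $\sum_i t_i^2/\alpha_i^2 \le 1$. By the definition of $d_\lambda$,
\[
\langle x - d_\lambda, v^i\rangle = (1-\mu_i) s_i + \mu_i t_i .
\]
Lemma~\ref{lem:technical}, applied with $\alpha=\alpha_i$, $\mu=\mu_i$, $x=s_i$, $y=t_i$, yields
\[
\frac{((1-\mu_i)s_i+\mu_i t_i)^2}{\alpha_i^{2\lambda}} \le (1-\lambda)s_i^2 + \lambda\frac{t_i^2}{\alpha_i^2}.
\]
Summing over $i$ gives $\|x-d_\lambda\|_{E_\lambda}^2 \le (1-\lambda)\sum_i s_i^2 + \lambda \sum_i t_i^2/\alpha_i^2 \le 1$, which is the inclusion.

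For the boundary statement with $\lambda\in(0,1)$, take $x$ in the intersection with $\bd(E_\lambda+d_\lambda)$. Then every inequality in this chain is an equality. This forces three things: $x\in\bd(E_0+d_0)\cap\bd(E_1+d_1)$ (the last step, since $\lambda\in(0,1)$); equality in each coordinatewise application of Lemma~\ref{lem:technical}; and hence, for each $i$, either $\alpha_i=1$ and $s_i=t_i$, or $s_i=t_i=0$. In both cases $s_i=t_i$, i.e.\ $\langle d_0-d_1,v^i\rangle=0$ for all $i$, so $d_0=d_1$. This gives the empty case when $d_0\neq d_1$.

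When $d_0=d_1$, we have $d_\lambda=d_0$ (since $s_i=t_i$ forces the coordinates of $d_\lambda$ to agree with those of $d_0$; directly, $d_\lambda=d_0$ by definition). Moreover $s_i=0$ whenever $\alpha_i\ne1$, so $x-d_\lambda\in\CV_E$. Together with $x\in\bd(E_0+d_0)\cap\bd(E_1+d_1)$ this is the inclusion "$\subseteq$". Conversely, a point $x\in\bd(E_0+d_0)\cap\bd(E_1+d_1)\cap(\CV_E+d_0)$ has $x-d_0\in\CV_E$ with Euclidean norm $1$. Since $E_\lambda\cap\CV_E$ is the unit ball of $\CV_E$, the point lies on $\bd(E_\lambda+d_\lambda)$ and in both ellipsoids.

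The main care point is the equality analysis. One must make sure that equality in the summed inequality forces equality termwise; it does, because each term satisfies its inequality individually. One must also keep the degenerate case $s_i=t_i=0$, which still gives $s_i=t_i$. The real difficulty, finding a valid translation $d_\lambda$, is already absorbed into Lemma~\ref{lem:technical}, so the proof should be short.
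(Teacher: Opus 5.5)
Your proof is correct and follows essentially the same route as the paper: you apply Lemma~\ref{lem:technical} coordinatewise in the basis $v^1,\ldots,v^n$, sum the estimates, and use the equality analysis to force $d_0=d_1$ and $x-d_\lambda\in\CV_E$. The only minor difference is in the reverse inclusion when $d_0=d_1$, where you compute $\|x-d_\lambda\|_{E_\lambda}=1$ directly from $x-d_0\in\CV_E$; the paper instead checks that the equality cases of Lemma~\ref{lem:technical} and of the summed estimate are attained, and both arguments are valid.
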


Note that $(E_0 + d_0) \cap (E_1 + d_1) \cap \bd(E_\lambda + d_\lambda)$ may still be empty even if $d_0 = d_1$.

\begin{proof}
Let $x \in \R^n$.
Then
\begin{align*}
	\sum_{i=1}^n \frac{\langle x - d_\lambda, v^i \rangle^2}{\alpha_i^{2 \lambda}}
	& = \sum_{i=1}^n
		\frac{\langle x-\sum_{j=1}^n ((1-\mu_j) \langle d_0, v^j \rangle + \mu_j \langle d_1, v^j \rangle) v^j, v^i \rangle^2}
			{\alpha_i^{2 \lambda}}
	\\
	& = \sum_{i=1}^n \frac{( (1-\mu_i) \langle x - d_0, v^i \rangle + \mu_i \langle x - d_1, v^i \rangle )^2}{\alpha_i^{2 \lambda}}.
\end{align*}
Applying Lemma~\ref{lem:technical} for every summand gives
\[
	\sum_{i=1}^n \frac{( (1-\mu_i) \langle x - d_0, v^i \rangle + \mu_i \langle x - d_1, v^i \rangle )^2}{\alpha_i^{2 \lambda}}
	\leq \sum_{i=1}^n (1-\lambda) \langle x - d_0, v^i \rangle^2 + \lambda \frac{\langle x - d_1, v^i \rangle^2}{\alpha_i^2}. 
\]
If we now assume $x \in (E_0 + d_0) \cap (E_1 + d_1)$, then
\begin{align}
	\sum_{i=1}^n (1-\lambda) \langle x - d_0, v^i \rangle^2 + \lambda \frac{\langle x - d_1, v^i \rangle^2}{\alpha_i^2}
	& = (1-\lambda) \sum_{i=1}^n \langle x - d_0, v^i \rangle^2 + \lambda \sum_{i=1}^n \frac{\langle x - d_1, v^i \rangle^2}{\alpha_i^2}
	\nonumber
	\\
	& \leq (1-\lambda) + \lambda
	= 1.
	\label{eq:final_norm_ineq}
\end{align}
Altogether, we conclude that $(E_0 + d_0) \cap (E_1 + d_1) \subseteq E_\lambda + d_\lambda$.
\pagebreak

Now, let us assume that $\lambda \in (0,1)$ and $x \in (E_0 + d_0) \cap (E_1 + d_1) \cap \bd(E_\lambda + d_\lambda)$.
Then equality has to hold in \eqref{eq:final_norm_ineq}, which immediately shows that $x \in \bd(E_0 + d_0) \cap \bd(E_1 + d_1)$.
Next, we must also have equality in the application of Lemma~\ref{lem:technical} for every $i = 1, \ldots, n$.
Since $\lambda \in (0,1)$, this means for $\alpha_i = 1$ that $\langle x - d_0, v^i \rangle = \langle x - d_1, v^i \rangle$,
and for $\alpha_i \neq 1$ that $\langle x - d_0, v^i \rangle = \langle x - d_1, v^i \rangle = 0$.
In both cases, we immediately obtain $\langle d_0, v^i \rangle = \langle d_1, v^i \rangle$ as well.
The vectors $v^1, \ldots, v^n$ form an orthonormal basis of $\R^n$, so this implies $d_0 = d_1 = d_\lambda$.
Moreover, we have $x - d_\lambda = \sum_{i=1}^n \langle x - d_\lambda, v^i \rangle v^i$,
where the coefficient $\langle x - d_\lambda, v^i \rangle = \langle x - d_0, v^i \rangle$ is zero whenever $\alpha_i \neq 1$.
It follows that $x \in \CV_E + d_\lambda$.

Lastly, we need to show that $x \in \bd(E_0 + d_0) \cap \bd(E_1 + d_1) \cap (\CV_E + d_\lambda)$
is a boundary point of $E_\lambda + d_\lambda$ when $d_0 = d_1$.
This means proving equality in Lemma~\ref{lem:technical} for every $i = 1, \ldots, n$ and in \eqref{eq:final_norm_ineq},
where the latter is immediate.
For the former, we note that $\langle x - d_0, v^i \rangle = \langle x - d_1, v^i \rangle$ for all $i = 1, \ldots, n$ by $d_0 = d_1$.
If $\alpha_i \neq 1$, then $x - d_0 = x - d_\lambda \in \CV_E$ shows additionally $\langle x - d_0, v^i \rangle = 0$.
Therefore, one of the equality cases in Lemma~\ref{lem:technical} applies for every $i = 1, \ldots, n$,
completing the proof.
\end{proof}

With both the inner and outer mean ellipsoids handled,
we can now close this section with its main result.
It considers simultaneous approximation of two possibly different convex bodies with ellipsoids
and collects all facts established throughout the section.
Let us point out that applying the result for general ellipsoids is only a question of normalizing appropriately.

\begin{theorem}
\label{thm:general_mean_ellipsoid}
Let vectors $v^1, \ldots, v^n \in \R^n$ from an orthonormal basis,
let $c_0, c_1, d_0, d_1 \in \R^n$ be vectors,
let $\alpha_1, \ldots, \alpha_n, r_0, r_1, R_0, R_1 > 0$ and $\lambda \in [0,1]$ be reals,
and let $E \subseteq \R^n$ be an ellipsoid as in \eqref{eq:ellipsoid}.
For $i = 1, \ldots, n$, let $\mu_i \in [0,1]$ with
$1 - (1-\lambda) ( \frac{R_1}{R_0} \alpha_i)^\lambda \leq \mu_i \leq \lambda (\frac{R_1}{R_0} \alpha_i)^{\lambda-1}$.
If $K, L \subseteq \R^n$ are convex bodies such that
\[
    (r_0 E_0+c_0) \cup (r_1 E_1+c_1) \subseteq K
        \quad \text{and} \quad
    L \subseteq (R_0 E_0 + d_0) \cap (R_1 E_1 + d_1),
\] 
then
\[
	r_\lambda E_\lambda + c_{\lambda} \subseteq K
		\quad \text{ and } \quad
	L \subseteq R_{\lambda} E_\lambda + d_{\lambda},
\]
where $c_{\lambda} = (1-\lambda) c_0 + \lambda c_1$,
$d_{\lambda} = \sum_{i=1}^n ((1-\mu_i) \langle d_0, v^i \rangle + \mu_i \langle d_1, v^i \rangle) v^i$,
$r_\lambda = r_0^{1-\lambda} r_1^\lambda$,
and $R_{\lambda} = R_0^{1-\lambda} R_1^{\lambda}$.
If $\lambda \in (0,1)$, then additionally
\begin{enumerate}[(i)]
\item any
    common boundary point of $r_\lambda E_\lambda + c_{\lambda}$ and $K$
	is contained in the affine subspace $(\lin \{ v^i : \alpha_i = \frac{r_0}{r_1}, i = 1, \ldots, n \} \cap (\lin \{c_0-c_1\})^\perp) + c_\lambda$, and
\item any
    common boundary point of $R_{\lambda} E_\lambda + d_{\lambda}$ and $L$
	is contained in the affine subspace $\lin \{ v^i : \alpha_i = \frac{R_0}{R_1}, i = 1, \ldots, n \} + d_\lambda$
    and is a boundary point of $R_0 E_0 + d_0$ and $R_1E_1 + d_1$, as well.
	If any such point exists, then $d_0 = d_1$.
\end{enumerate}
\end{theorem}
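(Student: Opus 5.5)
The plan is to reduce both halves of Theorem~\ref{thm:general_mean_ellipsoid} to Lemmas~\ref{lem:inner_mean_ellipsoid} and~\ref{lem:outer_mean_ellipsoid} by rescaling. The rescaling turns the pair $r_0E_0, r_1E_1$ (respectively $R_0E_0, R_1E_1$) into a Euclidean ball and an ellipsoid of the form \eqref{eq:ellipsoid}.

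\textbf{Inner part.} Dividing by $r_0$, we have $r_1 E_1 = r_0 E'$, where $E'$ is the ellipsoid \eqref{eq:ellipsoid} with semi-axes $\alpha_i' = \frac{r_1}{r_0}\alpha_i$. The mean ellipsoid then satisfies $E'_\lambda = \left(\frac{r_1}{r_0}\right)^\lambda E_\lambda$, so $r_0 E'_\lambda = r_\lambda E_\lambda$. Applying Lemma~\ref{lem:inner_mean_ellipsoid} to $E'$ with centers $c_0/r_0$ and $c_1/r_0$, and scaling back by $r_0$, gives
\[
r_\lambda E_\lambda + c_\lambda \subseteq (1-\lambda)(r_0E_0+c_0)+\lambda(r_1E_1+c_1).
\]
The right-hand side lies in $\conv((r_0E_0+c_0)\cup(r_1E_1+c_1))\subseteq K$.

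For (i), let $\lambda\in(0,1)$ and let $x$ be a common boundary point of $r_\lambda E_\lambda+c_\lambda$ and $K$. Then $x$ is also a boundary point of the Minkowski combination, and $K$ has a supporting hyperplane at $x$ with normal $a$. I will run the argument given after Lemma~\ref{lem:inner_mean_ellipsoid}. By the lemma's equality case, $a\in\CV_{E'}=\lin\{v^i:\alpha_i=\frac{r_0}{r_1}\}$. The support-function identity $(1-\lambda)h_0(a)+\lambda h_1(a)=\max\{h_0(a),h_1(a)\}$ forces $h_0(a)=h_1(a)$. On $\CV_{E'}$ both scaled ellipsoids have support function $r_0|a|$, so this reduces to $\langle a,c_0-c_1\rangle=0$. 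The lemma also places $x$ in $\CV_{E'}+c_\lambda$. Because $a$ is the normal of $r_\lambda E_\lambda+c_\lambda$ at $x$ and $a$ lies in an eigenspace, $x-c_\lambda$ is a positive multiple of $a$, which gives (i).

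\textbf{Outer part.} Divide by $R_0$ and set $E''$ with $\alpha_i''=\frac{R_1}{R_0}\alpha_i$, so that $R_0E''_\lambda=R_\lambda E_\lambda$. The hypothesis on $\mu_i$ is exactly that of Lemma~\ref{lem:outer_mean_ellipsoid} for $\alpha_i''$. Applying that lemma with translations $d_0/R_0$ and $d_1/R_0$ gives
\[
L\subseteq (R_0E_0+d_0)\cap(R_1E_1+d_1)\subseteq R_\lambda E_\lambda+d_\lambda,
\]
with the $d_\lambda$ of the statement, since the formula is linear in $d_0, d_1$.

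For (ii), a common boundary point of $L$ and $R_\lambda E_\lambda+d_\lambda$ lies in the intersection $(R_0E_0+d_0)\cap(R_1E_1+d_1)$ and in $\bd(R_\lambda E_\lambda+d_\lambda)$. The boundary statement of the lemma, scaled, then says the point lies on both boundaries and in $\CV_{E''}+d_\lambda=\lin\{v^i:\alpha_i=\frac{R_0}{R_1}\}+d_\lambda$, and that $d_0=d_1$.

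I expect the main obstacle to be the inner boundary claim (i). The rest is rescaling bookkeeping, namely checking that $\CV$ transforms to the stated spans and that $r_\lambda$ and $R_\lambda$ come out as the geometric means. Claim (i), by contrast, needs care passing from the normal $a$ to the point $x$, and here the eigenspace structure of $\CV_{E'}$ must be used explicitly.
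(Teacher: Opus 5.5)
Your proposal is correct and follows essentially the same route as the paper. You rescale by $r_0$ (resp.\ $R_0$) so the pair becomes $\B^n$ and an ellipsoid with semi-axes $\frac{r_1}{r_0}\alpha_i$ (resp.\ $\frac{R_1}{R_0}\alpha_i$), apply Lemmas~\ref{lem:inner_mean_ellipsoid} and~\ref{lem:outer_mean_ellipsoid}, and for (i) run the support-function argument preceding \eqref{eq:stronger_inner_boundary_condition}; your eigenspace argument that $x-c_\lambda$ is a positive multiple of $a$ simply makes explicit the paper's one-line remark that the contact point lies in the central section parallel to that subspace.
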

\begin{proof}
We start with the inner ellipsoid.
To this end, we define for all $\lambda \in [0,1]$ the ellipsoid
\[
    F_\lambda
    = \left\{ x \in \R^n : \sum_{i=1}^n \frac{\langle x, v^i \rangle^2}{(\frac{r_1}{r_0} \alpha_i)^{2 \lambda}} \leq 1 \right\}.
\]
Then $F_\lambda = (\frac{r_1}{r_0})^\lambda E_\lambda$ since these ellipsoids have the same principal axes, and the axis-half-lengths of $F_\lambda$ and $E_\lambda$ coincide up to the factor $(\frac{r_1}{r_0})^\lambda$.
Therefore, Lemma~\ref{lem:inner_mean_ellipsoid} yields that
\begin{align*}
    K
    & \supseteq \conv((r_0 E_0 + c_0) \cup (r_1 E_1 + c_1))
    = r_0 \conv \left( \left(F_0 + \frac{1}{r_0} c_0 \right) \cup \left( F_1 + \frac{1}{r_0} c_1 \right) \right)
    \\
    & \supseteq r_0 \left( F_\lambda + \frac{1-\lambda}{r_0} c_0 + \frac{\lambda}{r_0} c_1 \right)
    = r_0^{1-\lambda} r_1^\lambda E_\lambda + (1-\lambda) c_0 + \lambda c_1
    = r_\lambda E_\lambda + c_\lambda.
\end{align*}
Moreover, the discussion preceding \eqref{eq:stronger_inner_boundary_condition} shows that the outer normal of a common supporting hyperplane at a contact point of the first and last set belongs to $(\CV_{F_1} \cap (\lin \{\frac{1}{r_0} c_0 - \frac{1}{r_0} c_1\})^\perp)$.
This implies that the common boundary point itself must lie in the central section of $r_\lambda E_\lambda + c_{\lambda}$ parallel to this subspace.
The claim in (i) follows.

We turn to the claims about outer ellipsoid.
For $\lambda \in [0,1]$, we define the ellipsoid $F'_\lambda$ like $F_\lambda$ above, but with $R_0$ and $R_1$ replacing $r_0$ and $r_1$, respectively.
Analogous to before, we have $F'_\lambda = (\frac{R_1}{R_0})^\lambda E_\lambda$.
Thus, applying Lemma~\ref{lem:outer_mean_ellipsoid} gives
\begin{align*}
	L
	& \subseteq (R_0 E_0 + d_0) \cap (R_1 E_1 + d_1)
	= R_0 \left( \left(F'_0 + \frac{1}{R_0} d_0 \right) \cap \left( F'_1 + \frac{1}{R_0} d_1 \right) \right)
	\\
	& \subseteq R_0 \left( F'_\lambda + \sum_{i=1}^n \left( (1-\mu_i) \left\langle \frac{1}{R_0} d_0, v^i \right\rangle
										+ \mu_i \left\langle \frac{1}{R_0} d_1, v^i \right\rangle \right) v^i \right)
	\\
	& = R_0^{1-\lambda} R_1^\lambda E_\lambda + \sum_{i=1}^n ( (1-\mu_i) \langle d_0, v^i \rangle
												+ \mu_i \langle d_1, v^i \rangle ) v^i
	= R_\lambda E_\lambda + d_\lambda.
\end{align*}

Lastly, Lemma~\ref{lem:outer_mean_ellipsoid} also yields that
any common boundary point of the first and last set
is also a boundary point of $R_0 (F'_0 + \frac{1}{R_0} d_0)$ and $R_0 (F'_1 + \frac{1}{R_0} d_1)$,
lies in the subspace $R_0 (\CV_{F'_1} + \sum_{i=1}^n ( (1-\mu_i) \langle \frac{1}{R_0} d_0, v^i \rangle + \mu_i \langle \frac{1}{R_0} d_1, v^i \rangle ) v^i)$,
and that its existence implies $\frac{1}{R_0} d_0 = \frac{1}{R_0} d_1$.
This simplifies to the additional claims in (ii).
\end{proof}

\section{Characterization of Affinely-Optimal Positions Involving Ellipsoids}
\label{sec:euclidean}

In this section, we prove Theorems~\ref{thm:euclidean}~and~\ref{thm:euclidean2},
which extend Theorem~\ref{thm:ader_cond} to the non-symmetric case.
In particular, we obtain the full characterization of the optimal Banach--Mazur position of an arbitrary convex body $K \subseteq \R^n$
with respect to the Euclidean ball.

We start with Theorem~\ref{thm:euclidean2} and derive Theorem~\ref{thm:euclidean} afterward in a straightforward way.
The necessity of the non-separation condition (ii) and the decomposition form (iii) for the affine-optimality
follow mostly from Theorem~\ref{thm:generaldecomp} adapted to the Euclidean setting.
Specifically for (ii), a direct transfer of the condition would lead to the slightly different inequality
\[
	\left\langle \frac{y}{1 - \langle y, c \rangle}, A y + v \right\rangle
	\leq \left\langle \frac{z}{1 - \langle z, d \rangle}, A z + w \right\rangle.
\]
This is because the outer normals in contact pairs of $\B^n$ with any other convex body
are always positive multiples of the common boundary point,
where Theorem~\ref{thm:generaldecomp}~(i) dictates the scaling of the outer normal.
The difference to the claimed version of Theorem~\ref{thm:euclidean2}~(ii)
arises mainly because the non-separation condition can be stated in some varied, equivalent forms,
where we chose the ones best fitting the respective contexts of Theorems~\ref{thm:generaldecomp}~and~\ref{thm:euclidean2}.
Nonetheless, we can still derive the desired form by using the decompositions as intermediate steps.
The key ingredient to obtain also the sufficiency of conditions (ii) and (iii) for the affine-optimality
is Theorem~\ref{thm:general_mean_ellipsoid} about means of ellipsoids.

\begin{proof}[Proof of Theorem~\ref{thm:euclidean2}]
Throughout the proof, we may suppose that $0 \in \inte(L_1) \cap \inte(L_2)$
as we could otherwise translate $L_1$ and $L_2$ while adjusting $c$ and $d$ accordingly.
Note that this does not affect the sets $\bd(\B^n) \cap \bd( r L_1 + c )$ and $\bd(\B^n) \cap \bd( R L_2 + d )$.

Now, let us assume that condition (i) of Theorem~\ref{thm:euclidean2} holds.
Then we may take contact pairs and weights as given in Theorem~\ref{thm:generaldecomp}~(ii).
Since the only outer normals of hyperplanes supporting $\B^n$ at its boundary points are positive multiples of those boundary points,
we have $a^i = \alpha_i y^i$ for all $i = 1, \ldots, N$ and $b^j = \beta_j z^j$ for all $j = 1, \ldots, M$,
where $\alpha_1, \ldots, \alpha_N$, $\beta_1, \ldots, \beta_M > 0$ are reals.
Therefore, replacing all weights $\lambda_i$ by $\lambda_i \alpha_i > 0$
and $\mu_j$ by $\mu_j \beta_j > 0$ yields the desired decomposition in (iii).

Next, assume that a decomposition as given in (iii) exists.
The claimed equality $\sum_{i=1}^N \lambda_i = \sum_{j=1}^M \mu_j$ in this case
is a direct consequence of comparing traces of the matrices in such a decomposition.
Moreover, the bound $N+M \leq \frac{n (n+5)}{2} + 1$
can be obtained by sharpening the argument for the analogous bound in Theorem~\ref{thm:generaldecomp}.
The sets
\[
	\mathcal{Y}
	= \{ (y y^T | y) \in \R^{n \times (n+1)} : y \in \bd(\B^n) \cap \bd( r L_1 + c ) \}
\]
and
\[
	\mathcal{Z}
	= \{ (z z^T | z) \in \R^{n \times (n+1)} : z \in \bd(\B^n) \cap \bd( R L_2 + d ) \},
\]
live in a hyperplane within the space $\ms{n} \times \R^n$ given by the matrix part having trace $1$.
The equality $\sum_{i=1}^{N} \lambda_i = \sum_{j=1}^{M} \mu_j$ shows that we can rescale all weights by a common factor
to assume that both these sums equal $1$,
meaning that the convex hulls of $\mathcal{Y}$ and $\mathcal{Z}$ intersect
in a point lying in the subspace where the vector part is additionally zero.
Since $\dim(\ms{n}) = \frac{n (n+1)}{2}$, Remark~\ref{rem:separation} on Lemma~\ref{lem:separation}
now lets us reduce the total number of points in the decomposition to
\[
	N+M
	\leq 2 \left( \left( \frac{n (n+1)}{2} - 1 + n \right) + 1 \right) - \left( \frac{n (n+1)}{2} - 1 \right)
	= \frac{n (n+5)}{2} + 1.
\]
Moreover, we obtain for any matrix $A \in \ms{n}$ and any vectors $v,w \in \R^n$ that
\[
	\min_{i = 1, \ldots, N} \langle y^i, A y^i + v \rangle
	\leq \sum_{i=1}^N \lambda_i \langle y^i, A y^i + v \rangle
	= \sum_{j=1}^M \mu_j \langle z^j, A z^j + w \rangle
	\leq \max_{j = 1, \ldots, M} \langle z^j, A z^j + w \rangle,
\]
establishing (ii).

We are left with proving the implication from (ii) to (i).
For the sake of simplicity, we may suppose that $r = R =1$ by considering $L_1' := r L_1$ and $L_2' := R L_2$.
Thus, we have
\begin{equation}
\label{eq:simple_eucl_chain}
	L_1 + c
	\subseteq \B^n
	\subseteq L_2 + d,
\end{equation}
and assume for a proof by contraposition that this containment chain is not affinely-optimal.
Our goal is to prove that (ii) is violated for some appropriate choices of the matrix $A \in \ms{n}$ and vectors $v,w \in \R^n$,
meaning that all contact points $y \in \bd(\B^n) \cap \bd( L_1 + c )$ and $z \in \bd(\B^n) \cap \bd( L_2 + d )$ satisfy
\[
	\langle y, A y + v \rangle
	> \langle z, A z + w \rangle.
\]

If \eqref{eq:simple_eucl_chain} is not affinely-optimal,
then there exist an origin-centered ellipsoid $E \subseteq \R^n$, vectors $c', d' \in \R^n$, and a real $\rho > 1$ such that
\[
	\rho (L_1 + c')
	\subseteq E
	\subseteq L_2 + d'.
\]
Applying Theorem~\ref{thm:general_mean_ellipsoid} for $\lambda \in [0,1]$ gives
\begin{equation}
\label{eq:eucl_contra_chain}
	\rho^\lambda (L_1 + c_\lambda)
	\subseteq E_\lambda
	\subseteq L_2 + d_\lambda,
\end{equation}
where the parameters $\mu_i$ for the definition of $c_\lambda$ are chosen arbitrarily in the respectively allowed range
and $d_\lambda = (1-\lambda) d + \lambda d'$
(note that the roles of the letters $c$ and $d$ are swapped here compared to Theorem~\ref{thm:general_mean_ellipsoid}).

Next, we take a symmetric, positive-definite matrix $S_\lambda \in \ms{n}$ such that
$\| x \|_{E_\lambda}^2 = \langle x, S_\lambda x \rangle$ for all $x \in \R^n$.
Then \eqref{eq:eucl_contra_chain} yields for any $x \in L_1 + c_\lambda$ that
\begin{equation}
\label{eq:cond_incl}
	\langle x, S_\lambda x \rangle
	= \| x \|^2_{E_\lambda} \leq \rho^{-2 \lambda},
\end{equation}
and for any $x \in \bd( L_2 + d_\lambda )$ that
\begin{equation}
\label{eq:cond_incl2}
	\langle x, S_\lambda x \rangle
	= \| x \|^2_{E_\lambda}
	\geq 1.
\end{equation}
With $A_\lambda := I_n - S_\lambda \in \ms{n}$ and $v_\lambda := -2 S_\lambda (c_\lambda - c)$,
\eqref{eq:cond_incl} shows for $y \in \bd(\B^n) \cap \bd( L_1 + c )$ that
\begin{align*}
	1 - \rho^{-2 \lambda}
	& \leq \langle y, y \rangle - \langle y - c + c_\lambda, S_\lambda (y - c + c_\lambda) \rangle
	\\
	& = \langle y, y \rangle - \langle y, S_\lambda y \rangle - 2 \langle y, S_\lambda (c_\lambda - c) \rangle
		- \langle c_\lambda - c, S_\lambda (c_\lambda - c) \rangle
	\\
	& = \langle y, A_\lambda y + v_\lambda \rangle - \| c_\lambda - c \|_{E_\lambda}^2
	\leq \langle y, A_\lambda y + v_\lambda \rangle.
\end{align*}
Similarly, setting $w_\lambda := -2 S_\lambda (d_\lambda - d)$ and using \eqref{eq:cond_incl2}
gives for $z \in \bd(\B^n) \cap \bd( L_2 + d )$ that
\begin{align*}
	0
	& \geq \langle z, z \rangle - \langle z - d + d_\lambda, S_\lambda (z - d + d_\lambda) \rangle
	\\
	& = \langle z, z \rangle - \langle z, S_\lambda z \rangle - 2 \langle z, S_\lambda (d_\lambda - d) \rangle
		- \langle d_\lambda - d, S_\lambda (d_\lambda - d) \rangle
	\\
	& = \langle z, A_\lambda z + w_\lambda \rangle - \| d_\lambda - d \|_{E_\lambda}^2
	= \langle z, A_\lambda z + w_\lambda \rangle - \lambda^2 \| d' - d \|_{E_\lambda}^2.
\end{align*}
By compactness, there exists some $C > 1$ independent of $\lambda$ with $\| d' - d \|_{E_\lambda}^2 \leq C$ for all $\lambda \in [0,1]$.
Putting the above together therefore gives
\[
	\langle y, A_\lambda y + v_\lambda \rangle
	\geq 1 - \rho^{-2 \lambda}
	\geq 1 - \rho^{-2 \lambda} - C \lambda^2 + \langle z, A_\lambda z + w_\lambda \rangle
\]
for all $\lambda \in [0,1]$, $y \in \bd(\B^n) \cap \bd( L_1 + c )$, and $z \in \bd(\B^n) \cap \bd( L_2 + d )$.
Finally, we note that the mapping $\lambda \mapsto 1 - \rho^{-2 \lambda}$ is concave.
Thus, choosing $\lambda \in \left (0,\frac{1 - \rho^{-2}}{C} \right )$, which is possible by $\rho > 1$,
gives
\[
	1 - \rho^{-2 \lambda}
	\geq (1-\rho^{-2}) \lambda
	> C \lambda^2.
\]
Altogether, we see that condition (ii) is violated for the choice $A_\lambda$, $v_\lambda$, and $w_\lambda$
whenever $\lambda > 0$ is sufficiently small, as desired.
\end{proof}

\begin{proof}[Proof of Theorem~\ref{thm:euclidean}]
The initial assumption $r \B^n + c \subseteq K \subseteq R \B^n + d$ can be restated as
$\frac{1}{R} (K-d) \subseteq \B^n \subseteq \frac{1}{r} (K-c)$.
Moreover, the equality $d_{BM}(K, \B^n) = \frac{R}{r}$ is equivalent to the latter containment chain being affinely-optimal.
In other words, the first condition in Theorem~\ref{thm:euclidean}
is equivalent to the first condition in Theorem~\ref{thm:euclidean2} in the case of $L_1 = L_2 = K$.
We shall prove that the corresponding conditions (ii) and (iii) of these theorems are also equivalent in this case.

Indeed, suppose that (ii) holds in Theorem~\ref{thm:euclidean2},
i.e., for any $A \in \ms{n}$ and $v, w \in \R^n$
there exist contact points $y \in \bd(\B^n) \cap \bd( \frac{1}{R} (K-d) )$ and $z \in \bd(\B^n) \cap \bd( \frac{1}{r} (K-c) )$ such that
\[
	\langle y, Ay+v \rangle
	\leq \langle z, Az + w \rangle.
\]
Applying this for $A' = -A$, $v' = -w$, and $w' = -v$ yields
$y' \in \bd(\B^n) \cap \bd( \frac{1}{R} (K-d) )$ and $z' \in \bd(\B^n) \cap \bd( \frac{1}{r} (K-c) )$ with
\[
	\langle z', A z' + v \rangle
	= - \langle z', A' z' + w' \rangle
	\leq - \langle y', A' y' + v' \rangle
	= \langle y', A y' + w \rangle.
\]
Therefore, $y = r z' + c \in \bd(K) \cap \bd(r \B^n + c)$ and $z = R y' + d \in \bd(K) \cap \bd(R \B^n + d)$ satisfy
\[
	\left\langle \frac{y-c}{r}, A \left ( \frac{y-c}{r} \right ) + v \right\rangle
	= \langle z', A z' + v \rangle
	\leq \langle y', A y' + w \rangle
	\leq \left\langle \frac{z-d}{R}, A \left ( \frac{z-d}{R} \right ) + w \right\rangle.
\]
This shows that (ii) holds also in Theorem~\ref{thm:euclidean}.
Reversing all steps above yields the reverse implication.

As for the conditions (iii) of both theorems,
the same argument as above shows that
the decomposition condition (iii) for the inclusions $\frac{1}{R}(K-d) \subseteq \B^n \subseteq \frac{1}{r}(K-c)$ of the form
\[
	\sum_{i=1}^N \lambda_i y^i (y^i)^T
    = \sum_{j=1}^M \mu_j z^j (z^j)^T
		\quad \text{and} \quad
	\sum_{i=1}^N \lambda_i y^i
    = \sum_{j=1}^M \mu_j z^j
    = 0
\]
corresponds to the decomposition of the form
\[
	\sum_{i=1}^N \lambda_i (y^i-c) (y^i-c)^T
    = \sum_{j=1}^M \mu_j (z^j-d) (z^j-d)^T
		\quad \text{and} \quad
	\sum_{i=1}^N \lambda_i (y^i-c)
    = \sum_{j=1}^M \mu_j (z^j-d)
    = 0
\]
for the inclusions $r \B^n+c \subseteq K \subseteq R \B^n+d$.
Note that the rescalation factors can always be absorbed into the weights if necessary.
This completes the proof.
\end{proof}

In closing this section, let us point out for an ellipsoid $E \subseteq \R^n$
that a containment chain $r E + c \subseteq K \subseteq R E + d$ or $r L_1 + c \subseteq E \subseteq R L_2 + d$
is affinely-optimal if and only if $E$ is a locally optimal ellipsoid
(i.e., no ellipsoid $E' \subseteq \R^n$ sufficiently close to $E$ allows a smaller ratio than $\frac{R}{r}$).
A direct way to see this proceeds via Theorem~\ref{thm:general_mean_ellipsoid}.
Another option is to note that local optimality is sufficient for the proof of Theorem~\ref{thm:generaldecomp},
whereas Theorems~\ref{thm:euclidean}~and~\ref{thm:euclidean2}
show that the resulting conditions are in turn sufficient for global optimality.

\section{Applications for the Banach--Mazur distance to the Euclidean ball}
\label{sec:eucl_appl}

We end this paper with a final section
on applications of Theorems~\ref{thm:euclidean}~and~\ref{thm:general_mean_ellipsoid}
concerning the Banach--Mazur distance to the Euclidean ball.
Our first step is to show how the known tight general bounds on the distance to the Euclidean ball
can be recovered from Theorem~\ref{thm:euclidean}.
An alternative proof of the inequality $d_{BM}(K, \B^n) \leq \sqrt{n}$ for centrally-symmetric convex bodies $K \subseteq \R^n$
has already been provided in \cite[Corollary~$2.2$]{grundbacherkobos} as a corollary of Theorem~\ref{thm:ader_cond}.
Similarly, as a consequence of the characterization of the distance positions in the non-symmetric case,
we give an alternative proof of the inequality $d_{BM}(K, \B^n) \leq n$ for arbitrary convex bodies $K \subseteq \R^n$.

The classical proof of $d_{BM}(K, \B^n) \leq n$ due to John \cite{john} follows from the John Ellipsoid Theorem.
The estimate could be also deduced from the more general inequality $d_{G}(K, L) \leq n$ due to Gordon, Litvak, Meyer, Pajor
(see \cite[Theorem~$5.1$]{glmp}).
To the best of our knowledge, our approach based on Theorem~\ref{thm:euclidean}
is the first one to completely avoid the concept of volume.
While our proof might be slightly more involved than the classical one using the John Ellipsoid Theorem,
it has the advantage of allowing an immediate characterization of the simplex as the only convex body attaining equality.
This is not the case for the standard approaches, where typically arguments outside the proof of the inequality itself are required.
A first such proof was given by Leichtweiss \cite{leichtweiss},
and another later by Palmon \cite{palmon}, who was seemingly unaware of the previous (German) work of Leichtweiss.
Yet another proof follows from a result of Jim\'{e}nez and Nasz\'{o}di \cite{jimeneznaszodi},
who analyzed the equality case in the estimate $d_{G}(K, L) \leq n$ from \cite{glmp} instead.
Most recently, a proof from combining Jung's inequality with results on the Euclidean diameter-inradius-ratio has been given in \cite{brandenberggrundbacherellipsoids}.

\begin{corollary}
\label{cor:dist}
Let $K \subseteq \mathbb{R}^n$ be a convex body.
Then
\[
	d_{BM}(K, \B^n)
	\leq n,
\]
with equality if and only if $K$ is a simplex.
\end{corollary}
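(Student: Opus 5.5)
Take $K$ in a Banach--Mazur distance position with respect to the Euclidean ball: $r\B^n + c \subseteq K \subseteq R\B^n + d$ with $d_{BM}(K,\B^n) = \frac{R}{r}$. By compactness of the set of relevant positions such a position exists. Theorem~\ref{thm:euclidean}~(iii) then provides inner contact points $y^i$, outer contact points $z^j$, and weights satisfying
\[
\sum_i \lambda_i (y^i-c)(y^i-c)^T = \sum_j \mu_j (z^j-d)(z^j-d)^T, \qquad \sum_i \lambda_i (y^i-c) = \sum_j \mu_j (z^j-d) = 0 .
\]
The plan is to test this decomposition against suitable quadratic and linear functions of the contact points, much as in the volume-free proof of $\sqrt{n}$ in the symmetric case. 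Write $u^i = (y^i-c)/r$ and $w^j = (z^j-d)/R$; these are unit vectors. Since $K$ is convex and touches $r\B^n+c$ at $y^i$, we have $\langle x - c, u^i\rangle \le r$ for all $x \in K$. In particular, each outer contact point satisfies $\langle z^j - c, u^i \rangle \le r$, that is,
\[
\langle R w^j + d - c, u^i\rangle \le r .
\]

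The key step is to pair these inequalities with the decomposition. Multiply the inequality by $\lambda_i \mu_j \langle w^j, u^i\rangle$ is awkward because of signs, so instead I would sum $\lambda_i\mu_j\,\langle R w^j + (d-c), u^i\rangle$ against the appropriate weights. The balancing conditions $\sum_i \lambda_i u^i = 0$ and $\sum_j \mu_j w^j = 0$ make the translation term $d-c$ vanish after summation. The cross term $\sum_{i,j}\lambda_i\mu_j\langle w^j,u^i\rangle^2$ equals $\tr(P Q)$, where $P=\sum_i\lambda_i u^i(u^i)^T$ and $Q=\sum_j\mu_j w^j(w^j)^T$. The decomposition says $r^2P=R^2Q$, so $P$ and $Q$ are proportional, and the trace relation $r^2\sum_i\lambda_i = R^2\sum_j\mu_j$ controls them.

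A cleaner route is to use the quadratic inequality $\langle z^j - c, u^i\rangle \le r$ to obtain $\langle w^j,u^i\rangle \le (r - \langle d-c,u^i\rangle)/R$. Then, for each fixed $j$, evaluate $\sum_i \lambda_i \langle w^j,u^i\rangle^2 = \langle w^j, P w^j\rangle$ and compare it with $\sum_i \lambda_i \langle w^j, u^i\rangle$, which is zero. This is the standard trick: for numbers $t_i = \langle w^j,u^i\rangle \in [-1, s_i]$ with $\sum_i\lambda_i t_i = 0$, one has $\sum_i\lambda_i t_i^2 \le \sum_i \lambda_i s_i$-type bounds via $(t_i+1)(s_i - t_i) \ge 0$. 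Summing these over $j$ with weights $\mu_j$, and using that $Q$ is a multiple of $P$, should give $\frac{R}{r} \le n$. The trace of $P$ (or of $Q$) contributes the factor $n$, because $\tr(P)=\sum_i\lambda_i$ and $Q$ restricted suitably behaves like a multiple of the identity on the span.

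I expect the main obstacle to be extracting exactly the factor $n$. The decomposition does not give $P=\mathrm{Id}$, only $r^2P=R^2Q$, so the argument must exploit the proportionality itself. One likely route is to sum $(1+t_{ij})(\text{bound}_i - t_{ij})\ge 0$ over $i$ and $j$. Expanding this gives an inequality involving $\tr(PQ)$, $\tr(P)\tr(Q)$ and the translation terms, which cancel by balancing. Combining it with $\tr(PQ)\ge \tr(P)\tr(Q)/n$ (Cauchy--Schwarz on eigenvalues, valid because $P \propto Q$) should give $R/r\le n$.

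For the equality case, equality must hold in every step. Cauchy--Schwarz equality forces $P$ to be a multiple of the identity, and each product $(1+t_{ij})(\dots)$ must vanish. This means every pair of an outer contact point and an inner contact direction is either antipodal or tight for the supporting halfspace. From these vanishing conditions I would deduce that $K$ is contained in the intersection of the halfspaces $\{\langle x-c,u^i\rangle\le r\}$ and that the outer contact points are the vertices of the resulting simplex. Hence $\conv\{z^j\}\subseteq K\subseteq\bigcap_i H_i$ with both sides equal to a simplex, so $K$ is a simplex. The converse, that the simplex attains $n$, follows from the regular simplex together with its inscribed and circumscribed balls, once one verifies the decomposition via Theorem~\ref{thm:euclidean}.
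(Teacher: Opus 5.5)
Your proof of the inequality is correct and is the paper's argument up to the normalization $r=1$, $c=0$. The paper sums $(1-\langle y^i,z^j\rangle)(R+\langle y^i,z^j-d\rangle)\ge 0$ with weights $\lambda_i\mu_j$. That is $R^2$ times your $(1+t_{ij})(s_i-t_{ij})\ge 0$. It then uses the two balancing conditions to kill all linear and translation terms, obtains $\tr(A^2)\le \tr(A)^2/R$ (your $\tr(P^2)\le \frac{r}{R}\tr(P)^2$), and concludes with $\tr(A)^2\le n\tr(A^2)$. Your aside that ``$Q$ restricted suitably behaves like a multiple of the identity'' is unnecessary and false in general. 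The version in your last paragraph, via $r^2P=R^2Q$, is the right one.

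The equality case, however, is only asserted. You correctly get $P\propto I_n$ and that every product $(1+t_{ij})(s_i-t_{ij})$ vanishes. But $K\subseteq\bigcap_iH_i$ is automatic. Moreover, ``antipodal or tight'' alone does not show that $\bigcap_iH_i$ is a simplex with the $z^j$ as vertices: a priori there could be many inner directions, some tight against every outer point. You need the following steps, which the paper supplies:
\begin{itemize}
\item[(a)] After merging repeated contact points, every $u^i$ has an antipodal partner $w^j=-u^i$. If $u^i$ were tight against all $z^j$, then $\sum_j\mu_jw^j=0$ gives $\langle d-c,u^i\rangle=r$. Hence all $t_{ij}=0$ and $Qu^i=0$, contradicting $Q\propto I_n$.
\item[(b)] Every $w^j$ has an antipodal partner. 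Indeed, $\sum_i\lambda_i\langle z^j-c,u^i\rangle=0<r\sum_i\lambda_i$, so $z^j$ is not tight against some $u^i$.
\item[(c)] Hence the pairing is a bijection with $z^j=d-Ru^{i(j)}$, and $z^j$ is tight against every $u^k$ with $k\ne i(j)$.
\item[(d)] A counting step shows $N=n+1$. The paper gets this by summing the balancing identities. Alternatively, for $n\ge 2$, (c) forces all $\langle u^i,u^k\rangle$ with $i\ne k$ to equal a common $\gamma$. The Gram matrix then gives $\gamma=-\frac{1}{N-1}$ and rank $N-1=n$.
\end{itemize}
Only after this are any $n$ of the $u^k$ linearly independent. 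Then $\bigcap_iH_i$ is a simplex whose vertices are exactly the $z^j$, and $\conv\{z^j\}\subseteq K\subseteq\bigcap_iH_i$ shows that $K$ is a simplex. Without (a)--(d), your equality argument states the goal rather than proving it.
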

\begin{proof}
By applying an appropriate affine transformation, we may without loss of generality assume that
$\B^n \subseteq K \subseteq R \B^n + d$ for some $d \in \R^n$ and $R = d_{BM}(K,\B^n)$.
By Theorem~\ref{thm:euclidean}, there exist integers $N, M \geq 1$,
inner contact points $y^1, \ldots, y^N \in \bd(K) \cap \bd( \B^n)$,
outer contact points $z^1, \ldots, z^M \in \bd(K) \cap \bd( R\B^n+d)$,
and weights $\lambda_1, \ldots, \lambda_N, \mu_1, \ldots, \mu_M > 0$ such that
\begin{equation}
\label{eq:ineq_Ader_cond}
	\sum_{i=1}^N \lambda_i y^i (y^i)^T
	= \sum_{j=1}^M \mu_i (z^j-d) (z^j-d)^T
	= A
		\quad \text{and} \quad
	\sum_{i=1}^N \lambda_i y^i
	= \sum_{j=1}^M \mu_i (z^j-d)
	= 0
\end{equation}
for some matrix $A \in \ms{n}$.
We additionally have
\begin{equation}
\label{eq:ineq_trace_cond}
	\tr(A)
	= \sum_{i=1}^N \lambda_i \langle y^i, y^i \rangle
	= \sum_{i=1}^N \lambda_i
	= R^2 \sum_{j=1}^M \mu_j
    > 0
\end{equation}
and may suppose that $y^i \neq y^j$ for $i \neq j$ and similarly $z^i \neq z^j$ for $i \neq j$.

For any $i = 1, \ldots, N$, $y^i$ is a common boundary point of $K$ and $\B^n$,
so it is also an outer normal of a hyperplane supporting $K$ at $y^i$.
In particular, $z^j \in K$ yields $\langle y^i, z^j \rangle \leq \langle y^i, y^i \rangle = 1$ for $j = 1, \ldots, M$.
The Cauchy--Schwarz equality additionally shows $\langle y^i, z^j - d \rangle \geq - R$, so
\begin{align}
	0
	& \leq \sum_{i=1}^N \sum_{j=1}^M \lambda_i \mu_j ( 1 - \langle y^i , z^j \rangle ) (R + \langle y^i, z^j - d \rangle)
	\label{eq:ineq_ij_cond}
	\\
	& = R \sum_{i=1}^N \lambda_i \sum_{j=1}^M \mu_j
		+ \left\langle \sum_{i=1}^N \lambda_i y^i, \sum_{j=1}^M \mu_j ((1-R) z^j - d) \right\rangle
		- \sum_{i=1}^N \lambda_i \left\langle y^i, \sum_{j=1}^M \mu_j \langle y^i, z^j - d \rangle z^j \right\rangle.
	\nonumber
\end{align}
Using \eqref{eq:ineq_Ader_cond} and \eqref{eq:ineq_trace_cond}, this can be simplified to
\begin{align*}
	0
	& \leq \frac{\tr(A)^2}{R} - \sum_{i=1}^N \lambda_i \langle y^i, A y^i \rangle
	= \frac{\tr(A)^2}{R} - \sum_{i=1}^N \lambda_i \tr( y^i (y^i)^T A )
	\\
	& = \frac{\tr(A)^2}{R} - \tr \left( \sum_{i=1}^N \lambda_i y^i (y^i)^T A \right)
	= \frac{\tr(A)^2}{R} - \tr(A^2).
\end{align*}
Rearranging and applying the Cauchy--Schwarz inequality for the Frobenius inner product on $(\ms{n})^2$ gives
\begin{equation}
\label{eq:ineq_obtained}
	R
	\leq \frac{\tr(A)^2}{\tr(A^2)}
	= \frac{\langle I_n, A \rangle_F^2}{\tr(A^2)}
	\leq \frac{\langle I_n, I_n \rangle_F \langle A, A \rangle_F}{\tr(A^2)}
	= \frac{\tr(I_n^2) \tr(A^2)}{\tr(A^2)}
	= n.
\end{equation}
We are left with proving that $K$ must be a simplex in the equality case.

If the equality $R=n$ holds,
then we have in particular equality in the application of the Cauchy--Schwarz inequality in \eqref{eq:ineq_obtained},
which implies that the positive semi-definite matrix $A$ is a positive multiple of the identity,
i.e., $A = \frac{\tr(A)}{n} I_n$.
Moreover, we also have equality in \eqref{eq:ineq_ij_cond}, which means that for all $i = 1, \ldots, N$ and $j = 1, \ldots, M$,
\begin{equation}
\label{eq:ineq_yizj_cond}
	\langle y^i, z^j \rangle = 1
		\quad \text{or} \quad
	\langle y^i, z^j - d \rangle = -R = -n.
\end{equation}
Whenever the latter applies, the equality case in the Cauchy--Schwarz inequality shows that $z^j - d = -n y^i$.

Now, assume for a contradiction that for some fixed $i$ we have $\langle y^i, z^j \rangle = 1$ for all $j$.
Then, by \eqref{eq:ineq_Ader_cond},
\[
	0
	= \left\langle y^i, \sum_{j=1}^M \mu_j (z^j-d) \right\rangle
	= \sum_{j=1}^M \mu_j \langle y^i, z^j - d \rangle
	= (1 - \langle y^i, d \rangle) \sum_{j=1}^M \mu_j
\]
would show that $\langle y^i, d \rangle = 1$ and consequently $\langle y^i, z^j - d \rangle = 0$ for all $j$.
However, using $A = \frac{\tr(A)}{n} I_n$ now leads to the desired contradiction
\[
	0
	\neq y^i
	= \frac{n}{\tr(A)} A y^i
	= \frac{n}{\tr(A)} \sum_{j=1}^n \mu_j \langle y^i, z^j - d \rangle (z^j - d)
	= 0.
\]
Therefore, for any $i$, there exists $j$ with $\langle y^i, z^j - d \rangle = -n$ and thus $z^j - d = -n y^i$.
Similarly, for fixed $j$, we get from \eqref{eq:ineq_Ader_cond} that
\begin{equation}
\label{eq:ineq_isum_cond}
	0
	= \left\langle \sum_{i=1}^N \lambda_i y^i, z^j \right\rangle
	= \sum_{i=1}^N \lambda_i \langle y^i, z^j \rangle,
\end{equation}
so there exists some $i$ with $\langle y^i, z^j \rangle \leq 0$.
In this case, we must have $\langle y^i, z^j - d \rangle = -n$ and $z^j - d = -n y^i$ by \eqref{eq:ineq_yizj_cond}.
Since we assumed that the $y^i$ and $z^j$ are respectively pairwise distinct,
we obtain in summary that $N = M$ and after an appropriate reindexing that $z^i - d = -n y^i$ for all $i = 1, \ldots, N$.
This also means $-n y^i \neq -n y^j = z^j - d$ for $i \neq j$,
so \eqref{eq:ineq_yizj_cond} shows $\langle y^i, z^j \rangle = 1$ for all $i \neq j$.
Putting these inner products into \eqref{eq:ineq_isum_cond} yields for any $i$ that
\[
	0
	= -n \lambda_i + \lambda_i \langle y^i, d \rangle  + \sum_{\substack{j=1 \\ j \neq i}}^N \lambda_j
	= -n \lambda_i + \lambda_i \langle y^i, d \rangle + \tr(A) - \lambda_i
\]
and therefore
\[
	\lambda_i \langle y^i, d \rangle
	= \lambda_i (n+1) - \tr(A).
\]
Summing over all $i$, we get from \eqref{eq:ineq_Ader_cond} that
\[
	0
	= \left\langle \sum_{i=1}^N \lambda_i y^i, d \right\rangle
	= \sum_{i=1}^N \lambda_i \langle y^i, d \rangle
	= \sum_{i=1}^N \left( \lambda_i (n+1) - \tr(A) \right)
	= (n+1) \tr(A) - N \tr(A),
\]
so $N = n + 1$.
For $i \neq j$, we also obtain
\[
	\langle y^i, y^j \rangle
	= \left\langle y^i, \frac{d-z^j}{n} \right\rangle
	= \frac{\langle y^i, d \rangle - 1}{n}
	= 1 - \frac{\tr(A)}{n \lambda_i}.
\]
Switching roles of $i$ and $j$ does not change $\langle y^i, y^j \rangle$,
but replaces $\lambda_i$ by $\lambda_j$ on the right-hand side.
Therefore, all $\lambda_i$ coincide and must equal $\frac{\tr(A)}{n+1}$.
In particular, $\langle y^i, d \rangle = 0$ and $\langle y^i, y^j \rangle = - \frac{1}{n}$ for all $i \neq j$.
It follows that the $y^i$ are the vertices of a regular simplex $S$ inscribed in $\B^n$.
Moreover, $A = \frac{\tr(A)}{n} I_n$ shows
\[
	d
	= \frac{n}{\tr(A)} A d
	= \sum_{i=1}^N \lambda_i \langle d, y^i \rangle y^i
	= 0.
\]
It remains to notice that $K$ contains in the vertices $-n y^i = z^i - d = z^i$ of $-n S$
and has the facet centroids $y^1, \ldots, y^{n+1}$ of $-nS$ in its boundary.
It follows that $K = -n S$ is a simplex.

While it is well-known that the Banach--Mazur distance of the simplex to the Euclidean ball is equal to $n$,
we can also justify this based on Theorem~\ref{thm:euclidean}~(iii) as follows:
For a regular simplex $S$ circumscribed about $\B^n$, a direct computation shows that its vertices lie in $\bd(n \B^n)$.
Moreover, there exists a John decomposition based on the common boundary points of $S$ and $\B^n$, namely the facet centroids of $S$,
as well as on the common boundary points of $S$ and $n \B^n$, namely the vertices of $S$.
Therefore, an Ader decomposition like in Theorem~\ref{thm:euclidean}~(iii) exists,
with the common operator $A$ being a multiple of the identity.
This immediately verifies $d_{BM}(S,\B^n) = n$.
\end{proof}

Our last goal is to prove several results about the uniqueness of ellipsoids giving the Banach--Mazur distance to a convex body.
We say that a pair of ellipsoids $(E,F)$ is a \cemph{pair of distance ellipsoids} for $K$ if
$E$ and $F$ are homothetic with ratio $d_{BM}(K,\B^n)$ and satisfy $E \subseteq K \subseteq F$.
Note that the centers of $E$ and $F$ may be placed arbitrarily.

Our core result is a generalization of the Maurey Ellipsoid Theorem to general convex bodies in $\R^n$.
In the symmetric case, the theorem states for an origin-symmetric convex body $K \subseteq \R^n$
that there exists a linear subspace $U \subseteq \R^n$ (possibly $U = \R^n$)
such that $d_{BM}(K \cap U, \B^n \cap U) = d_{BM}(K,\B^n)$
and the pair of distance ellipsoids for $K \cap U$ is unique.
Here and in the following, the Banach--Mazur distance of lower-dimensional convex bodies
is understood with respect to their common linear span.
The result of Maurey has been mentioned in several different papers (see \cite{tomczakstructure,arias,praetorius}),
but its first published proof appeared only recently in \cite[Theorem~$2.6$]{grundbacherkobos} based on Theorem~\ref{thm:ader_cond}.

In the following, we establish a variant of Maurey's result,
which covers the non-symmetric case and which uses projections instead of sections.
The well-known duality between projections and sections shows
that the new variant indeed generalizes the origin-symmetric case.
We start with a consequence of Theorem~\ref{thm:general_mean_ellipsoid}.
It shows that any convex body contains a designated point that is always the center of the outer ellipsoid in pairs of distance ellipsoids.
Let us point out that this point may lie on the boundary of the convex body.
For example, if $K \subseteq \R^3$ is the convex hull of $\B^3$
and a regular triangle of inradius $1$ that touches $\B^3$ tangentially with its centroid $c$,
then Theorem~\ref{thm:euclidean} shows that the Euclidean ball of radius $2$ centered at $c$ belongs to a pair of distance ellipsoids for $K$.

\begin{lemma}
\label{lem:ball_distance_center}
Let $K \subseteq \R^n$ be a convex body.
Then there exists some point $x \in K$ such that whenever $(E,F)$ is a pair of distance ellipsoids for $K$,
the point $x$ is the center of $F$.
\end{lemma}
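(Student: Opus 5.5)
The plan is to show that any two pairs of distance ellipsoids have outer ellipsoids with the same center, using the mean ellipsoid construction of Theorem~\ref{thm:general_mean_ellipsoid} together with its boundary statement (ii). Set $\rho = d_{BM}(K,\B^n)$ and let $(E,F)$ and $(E',F')$ be two pairs of distance ellipsoids. We write $F = R_0 G_0 + d_0$ and $F' = R_1 G_1 + d_1$, where $G_0, G_1$ are origin-centered ellipsoids. After a linear transformation we may assume $G_0 = \B^n$ and $G_1 = E$ in the notation \eqref{eq:ellipsoid}. Correspondingly, the inner ellipsoids are $r_0 G_0 + c_0$ and $r_1 G_1 + c_1$ with $R_k = \rho r_k$. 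Since linear transformations map pairs of distance ellipsoids to pairs of distance ellipsoids and preserve centers, this normalization is harmless.

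Fix $\lambda \in (0,1)$, say $\lambda = \frac12$, and apply Theorem~\ref{thm:general_mean_ellipsoid} with $L = K$ playing both roles. It gives $r_\lambda E_\lambda + c_\lambda \subseteq K \subseteq R_\lambda E_\lambda + d_\lambda$, and the ratio $R_\lambda / r_\lambda = \rho$ is again optimal. Hence $(r_\lambda E_\lambda + c_\lambda, R_\lambda E_\lambda + d_\lambda)$ is itself a pair of distance ellipsoids. The key step is to show that the outer ellipsoid of any distance pair must touch $K$. If $K \subseteq \inte(R_\lambda E_\lambda + d_\lambda)$, we could shrink $R_\lambda$ slightly while keeping the inner ellipsoid fixed, and get ratio $< \rho$, a contradiction. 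So some common boundary point of $R_\lambda E_\lambda + d_\lambda$ and $K$ exists, and Theorem~\ref{thm:general_mean_ellipsoid}~(ii) then forces $d_0 = d_1$. Thus the centers of $F$ and $F'$ coincide.

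It remains to produce the point $x$ and check $x \in K$. Because distance pairs exist by compactness, all outer centers agree and we take $x$ to be this common center. To see $x \in K$, apply Theorem~\ref{thm:euclidean} to a distance pair normalized as $r\B^n + c \subseteq K \subseteq R\B^n + d$ (after an affine map, which preserves both centers and membership in $K$). It gives outer contact points $z^j \in K$ and weights $\mu_j > 0$ with $\sum_j \mu_j (z^j - d) = 0$. Therefore $d$ is a convex combination of the $z^j$, so $d \in K$.

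The main obstacle is making the rescaling to $G_0 = \B^n$ fit the hypotheses of Theorem~\ref{thm:general_mean_ellipsoid}, in particular confirming that the admissible $\mu_i$ exist. The interval $[1-(1-\lambda)\beta^\lambda, \lambda\beta^{\lambda-1}] \cap [0,1]$ is nonempty by \eqref{eq:ineq_alpha2}, and one also has to handle the trivial case where the two outer ellipsoids coincide up to translation. Everything else is bookkeeping.
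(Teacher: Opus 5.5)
Your proof is correct and follows essentially the paper's argument. You normalize one outer ellipsoid to a ball, take the mean pair from Theorem~\ref{thm:general_mean_ellipsoid}, observe that it is again a distance pair so its outer ellipsoid must touch $K$, and conclude $d_0 = d_1$ from part~(ii). Your extra checks fill in details the paper leaves implicit: that admissible $\mu_i$ exist via \eqref{eq:ineq_alpha2}, and that the common center lies in $K$ because the balancing condition $\sum_j \mu_j (z^j - d) = 0$ of Theorem~\ref{thm:euclidean} makes $d$ a convex combination of contact points.
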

\begin{proof}
Without loss of generality,
we may assume that $(\B^n + c, R \B^n)$ is a pair of distance ellipsoids for $K$ for some $c \in \R^n$ and $R \geq 1$.
Let us suppose that $(E,F)$ is another pair of distance ellipsoids.
By Theorem~\ref{thm:general_mean_ellipsoid} for $\lambda \in (0,1)$,
there exist an origin-centered ellipsoid $E_\lambda$ and some vectors $c',d' \in \R^n$ such that
$E_\lambda + c' \subseteq K \subseteq R E_\lambda + d'$,
and $\bd(K) \cap \bd(R E_\lambda + d') \neq \emptyset$ only if the center of $F$ is zero.
Indeed, $R = d_{BM}(K,\B^n)$ shows that
$\bd(K) \cap \bd(R E_\lambda + d') \neq \emptyset$.
Consequently, $x = 0$ satisfies the desired property.
\end{proof}

We are now ready to prove the non-symmetric variant of the Maurey Ellipsoid Theorem.

\begin{theorem}
\label{thm:maurey}
Let $K \subseteq \R^n$ be a convex body.
Then there exist a linear subspace $U \subseteq \R^n$ (possibly $U = \R^n$) and a linear projection $\pi : \R^n \to U$
such that $d_{BM}(\pi(K),\B^n \cap U) = d_{BM}(K,\B^n)$ and the pair of distance ellipsoids for $\pi(K)$ is unique.
Moreover, if $K$ has a pair of Euclidean balls as distance ellipsoids,
then $\pi$ can be chosen to be the orthogonal projection onto $U$.
\end{theorem}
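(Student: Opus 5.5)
I would argue by induction on the dimension $n$, mimicking the symmetric proof in \cite{grundbacherkobos} but working with projections. Normalize so that $(\B^n + c, R\B^n)$ is a pair of distance ellipsoids for $K$ with $R = d_{BM}(K,\B^n)$. By Lemma~\ref{lem:ball_distance_center}, every pair of distance ellipsoids has its outer ellipsoid centered at $0$. Suppose the pair is not unique, and let $(E + c', R E)$ be another pair with $E \neq \B^n$ origin-centered, written as in \eqref{eq:ellipsoid}.

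Apply Theorem~\ref{thm:general_mean_ellipsoid} with $\lambda = \frac12$. This produces the pair $(E_{1/2} + c_{1/2}, R E_{1/2})$, which is again a pair of distance ellipsoids, since its ratio is $R$ and cannot be smaller. By parts (i) and (ii), all inner contact points lie in $\CV_E \cap (\lin\{c - c'\})^\perp + c_{1/2}$, and all outer contact points lie in $\CV_E$ (here $d_0 = d_1 = 0$ automatically). Set $U := \CV_E$. Since $E \neq \B^n$, we have $\dim U < n$, and $E_{1/2} \cap U = \B^n \cap U$.

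Next, apply Theorem~\ref{thm:euclidean} (via Lemma~\ref{lem:aff_decomp} to pass to the ellipsoid $E_{1/2}$) to obtain an Ader decomposition for $E_{1/2} + c_{1/2} \subseteq K \subseteq R E_{1/2}$. All contact vectors $y^i - c_{1/2}$ and $z^j$ lie in $U$. Let $\pi$ be the $E_{1/2}$-orthogonal projection onto $U$, i.e. along the $E_{1/2}$-orthogonal complement of $U$, which equals $U^\perp$ because $U$ is spanned by principal axes of $E_{1/2}$. Then:
\begin{itemize}
\item $\pi(E_{1/2}) = E_{1/2} \cap U$ and $\pi(c_{1/2} + E_{1/2}) = \pi(c_{1/2}) + \B^n \cap U$, which is contained in $\pi(K)$;
\item $\pi(K) \subseteq R(\B^n \cap U)$;
\item the contact points survive: $\pi(y^i)$ is a boundary point of $\pi(K)$, because the supporting hyperplane at $y^i$ has normal $y^i - c_{1/2} \in U$, and similarly for the $z^j$.
\end{itemize}
The decomposition restricted to $U$ is therefore an Ader decomposition in $U$ (with $c$ replaced by $\pi(c_{1/2})$). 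Theorem~\ref{thm:euclidean} in $U$ then gives $d_{BM}(\pi(K), \B^n\cap U) = R$. The inequality $\le$ also follows directly, since projection cannot increase the distance.

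Now iterate: apply the induction hypothesis to $\pi(K)$ inside $U$. This yields a further projection $\pi'$ onto $U' \subseteq U$ with unique distance ellipsoids and the same distance $R$. The composition $\pi'\circ\pi$ is the desired linear projection. The process terminates because the dimension strictly drops, and dimension $1$ is trivially unique.

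For the final claim, if the distance pair for $K$ consists of Euclidean balls, I start from that pair. At each step the projection is orthogonal with respect to $E_{1/2}$. I need it to be Euclidean-orthogonal, which holds since $U = \CV_E$ is spanned by the eigenvectors $v^i$, so $U^\perp$ is invariant. Within $U$, the new pair consists of balls of $U$ again, so the property persists through the induction.

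The main obstacle is making sure the contact points remain contact points after projection, and that the Ader decomposition transfers. This needs the normals of the inner contact points to lie in $U$, which is exactly what part (i) supplies. It also requires care with the translation vector $c_{1/2}$, whose $U^\perp$-component is killed by $\pi$.
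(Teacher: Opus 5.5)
\textbf{Gap: the case where only the inner ball moves.} Your route is essentially the paper's: mean ellipsoid with $\lambda\in(0,1)$, Ader decomposition, orthogonal projection onto a subspace spanned by principal axes, Theorem~\ref{thm:euclidean} in that subspace, then induction. But there is a gap at the start. Non-uniqueness only gives you \emph{some} second pair $(E+c',RE)$, and nothing you write justifies choosing one with $E\neq\B^n$. The second pair may be $(\B^n+c',R\B^n)$ with $c'\neq c$, i.e.\ the inner ball can slide inside $K$. Then $\CV_E=\R^n$, so your $U=\CV_E$ is all of $\R^n$ and the induction makes no progress. This case does occur. Take a regular triangle $T\subseteq\R^2$ of inradius $1$ centred at $0$, and $0<s\le 1$. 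The body $K=\conv((\B^3+se_3)\cup(\B^3-se_3)\cup(T\times\{0\}))$ projects onto $T$ and lies in $2\B^3$, so $d_{BM}(K,\B^3)=2$, and both $(\B^3+se_3,2\B^3)$ and $(\B^3-se_3,2\B^3)$ are pairs of distance ellipsoids.

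\textbf{How the paper handles it.} The paper takes $U'=\CV_E\cap(\lin\{c-c'\})^\perp$, which is a proper subspace in both cases. This is exactly where the factor $(\lin\{c-c'\})^\perp$ from part~(i) of Theorem~\ref{thm:general_mean_ellipsoid} gets used; you quote it but never use it. The price is that part~(ii) only places the outer contact points in $\CV_E$, so one extra step is needed. Normalize the decomposition to $\sum_i\lambda_i y^i(y^i-c_{1/2})^T=\sum_j\mu_j z^j(z^j)^T$ with $\sum_i\lambda_i(y^i-c_{1/2})=0$. For any $x\perp U'$ this gives $0=\sum_i\lambda_i\langle x,y^i-c_{1/2}\rangle\langle x,y^i\rangle=\sum_j\mu_j\langle x,z^j\rangle^2$. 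Hence every $z^j$ in the decomposition lies in $U'$, and your projection argument then goes through unchanged.

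\textbf{Alternative patch keeping $U=\CV_E$.} You can instead show that a sliding ball always produces a pair with $E\neq\B^n$. Put $h=\frac{|c-c'|}{2}$ and $u=\frac{c-c'}{|c-c'|}$, and let $E$ have semi-axis $1+h$ along $u$ and $1$ in every orthogonal direction. A point of $E+\frac{c+c'}{2}$ with $u$-coordinate $x_u$, $|x_u|>h$, has squared distance at most $1-\frac{x_u^2}{(1+h)^2}+(|x_u|-h)^2\le 1$ from the segment $[c,c']$. So $E+\frac{c+c'}{2}\subseteq[c,c']+\B^n\subseteq K$, while $RE\supseteq R\B^n\supseteq K$. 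This gives a distance pair with $\CV_E=u^\perp\neq\R^n$.
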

\begin{proof}
We proceed by induction on $n$.
Without loss of generality,
we may suppose that $ \B^n + c \subseteq K \subseteq R \B^n$ for some $c \in \R^n$ and $R = d_{BM}(K,\B^n)$.
There is nothing to show if this is the unique pair of distance ellipsoids for $K$
(which is always the case for $n=1$),
as we could just choose $U = \R^n$.
Suppose instead that $(E+c', R E + d')$ for some origin-centered ellipsoid $E$ and vectors $c', d' \in \R^n$
is another pair of distance ellipsoids for $K$.
Lemma~\ref{lem:ball_distance_center} shows that $d' = 0$.
Now, let $\CV_E$ be the linear subspace defined in \eqref{eq:def_v}
and set $U' = \CV_E \cap (\lin \{c-c'\})^\perp$.
Since $(E+c', R E)$ is assumed to be different from $(\B^n + c, R \B^n)$,
we have $\CV_E \neq \R^n$ or $c' \neq c$.
In either case, $U' \neq \R^n$.
Applying Theorem~\ref{thm:general_mean_ellipsoid} for $\lambda \in (0,1)$
yields an origin-centered mean ellipsoid $E_\lambda \subseteq \R^n$ and a vector $t \in \R^n$ such that
$E_\lambda + t \subseteq K \subseteq R E_\lambda$,
any point in $\bd(K) \cap \bd(E_\lambda + t)$ lies in $U' + t$,
and any point in $\bd(K) \cap \bd(R E_\lambda)$ lies in $\CV_E$.
Consequently, the outer normals of hyperplanes that support $K$ and $E_\lambda + t$ at common boundary points belong to $U'$,
whereas those for $K$ and $R E_\lambda$ belong to $\CV_E$.

Let us choose an Ader decomposition for $E_\lambda + t \subseteq K \subseteq R E_\lambda$ as in Theorem~\ref{thm:generaldecomp},
i.e.,
\[
	\sum_{i=1}^N \lambda_i y^i (a^i)^T
    = \sum_{j=1}^M \mu_j z^j (b^j)^T 
		\quad \text{and} \quad
	\sum_{i=1}^N \lambda_i a^i
    = \sum_{j=1}^M \mu_j b^j
    = 0,
\]
where the $(y^i, a^i)$ are contact pairs of $K$ and $E_{\lambda}+t$,
and the $(z^j, b^j)$ are contact pairs of $K$ and $R E_{\lambda}$.
As already noted, we have $y^i \in U'+t$ and $a^i \in U'$ for all $i = 1, \ldots, N$.
We claim that also $z^j, b^j \in U'$ for all $j = 1, \ldots, M$.

By the definition \eqref{eq:def_v} of the subspace $\CV_E$,
the intersection $E_\lambda \cap U'$ is the standard Euclidean ball in $U'$,
so the only outer normals of hyperplanes supporting $E_\lambda$
at $y^i$ are positive multiples of $y^i-t$.
Therefore, after a suitable rescaling, we may suppose that $a^i = y^i-t$ for $i=1, \ldots, N$.
Similarly, since $z^j \in \CV_E$ and $E_\lambda \cap \CV_E$ is the Euclidean ball in $\CV_E$,
we may assume that $b^j = z^j$ for all $j=1, \ldots, M$.
Thus, the decomposition above can be written as
\begin{equation}
\label{eq:Maurey_decomp}
	\sum_{i=1}^N \lambda_i y^i (y^i-t)^T
    = \sum_{j=1}^M \mu_j z^j (z^j)^T 
		\quad \text{and} \quad
	\sum_{i=1}^N \lambda_i (y^i-t)
    = \sum_{j=1}^M \mu_j z^j
    = 0.
\end{equation}

Now, let $x \in \R^n$ be any vector perpendicular to the subspace $U'$.
Then $\langle x, y^i-t \rangle = 0$ for all $i=1, \ldots, N$, so
\[
	0
	= \sum_{i=1}^{N} \lambda_i \langle x, y^i - t \rangle \langle x, y^i \rangle
	= \sum_{j=1}^{M} \mu_j \langle x, z^j \rangle^2.
\]
It follows that $\langle x, z^j \rangle = 0$ for all $j = 1, \ldots, M$,
which implies that all $z^j$ indeed belong to $U'$ since $x$ was an arbitrary vector perpendicular to $U'$. 
 
Let us now orthogonally project $K$ onto $U'$ using an orthogonal projection $\pi': \R^n \to U'$.
Then $\pi'(y^i-t) = y^i-t$ for all $i = 1, \ldots, N$ and in particular $\pi'(y^i) = y^i + t'$ for $t' = \pi'(t) - t$.
Moreover, $\pi'(z^j) = z^j$ for all $j=1, \ldots, M$, and
\[
	\pi'(E_\lambda) + \pi'(t)
	\subseteq \pi'(K)
	\subseteq R \pi'(E_\lambda).
\]
We further note that all hyperplanes supporting $K$ with outer normals belonging to $U'$ also support $\pi'(K)$.
Hence, the pairs $(y^i+t', y^i-t)$ are contact pairs of $\pi'(K)$ and $\pi'(E_\lambda) + \pi'(t)$,
and the pairs $(z^j, z^j)$ are contact pairs of $\pi'(K)$ and $R \pi'(E_\lambda)$, where we note that $\pi'(E_\lambda) = \B^n \cap U'$.
Additionally, $\sum_{i=1}^N \lambda_i (y^i - t) = 0$ from \eqref{eq:Maurey_decomp} shows
\[
    \sum_{i=1}^N \lambda_i (y^i - t) (y^i - t)^T
    = \sum_{i=1}^N \lambda_i y^i (y^i - t)^T
    = \sum_{j=1}^M \mu_j z^j (z^j)^T.
\]
Using $y^i-t = y^i+t' - \pi'(t)$,
we conclude from Theorem~\ref{thm:euclidean} that $d_{BM}(\pi'(K), \B^n \cap U') = R$.
\pagebreak

By $U' \neq \R^n$, the induction hypothesis yields a linear subspace $U'' \subseteq U'$
such that for the orthogonal projection $\pi'' : U' \to U''$ we have
\[
	d_{BM}(\pi''(\pi'(K)), \B^n \cap U'')
	= d_{BM}(\pi'(K), \B^n \cap U')
	= d_{BM}(K, \B^n)
\]
and the pair of distance ellipsoids for $\pi''(\pi'(K))$ is unique.
Setting $U = U''$ and $\pi = \pi'' \circ \pi'$ concludes the proof.
\end{proof}

We conclude this paper
with a corollary of the above theorem about convex bodies with near maximal distance to the Euclidean ball.
An analogous variant for centrally-symmetric convex bodies is provided in \cite[Corollary~$2.11$]{grundbacherkobos}.
Note that the uniqueness of pairs of distance ellipsoids in the symmetric planar case has been long established \cite{behrend}
and repeatedly studied (see, e.g., \cite{praetorius,romney}),
but appears to be novel in the general case.

\begin{corollary}
Let $K \subseteq \R^n$ be a convex body.
If $d_{BM}(K,\B^n) > n-1$, then the pair of distance ellipsoids for $K$ is unique.
In particular, the pair of distance ellipsoids for $K$ is always unique if $n = 2$.
\end{corollary}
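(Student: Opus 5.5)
Assume the pair of distance ellipsoids for $K$ is not unique. Theorem~\ref{thm:maurey} then gives a subspace $U$ and a linear projection $\pi$ onto $U$ with $d_{BM}(\pi(K),\B^n\cap U)=d_{BM}(K,\B^n)$ and a unique distance pair for $\pi(K)$. The point is that $U$ is a proper subspace. The proof of Theorem~\ref{thm:maurey} shows this: in the inductive step, non-uniqueness forces $U'\neq\R^n$, and the final $U$ lies inside $U'$. So $\dim U\le n-1$.

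Corollary~\ref{cor:dist}, applied in $U$ with respect to its linear span, then yields
\[
d_{BM}(K,\B^n)=d_{BM}(\pi(K),\B^n\cap U)\le \dim U\le n-1 .
\]
This contradicts the assumption $d_{BM}(K,\B^n)>n-1$, so the pair of distance ellipsoids for $K$ must be unique.

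The only point needing care is to read off from the inductive construction in Theorem~\ref{thm:maurey}, not merely its statement, that a non-unique pair yields a proper $U$. One can also argue this directly: in the first step we have $U'=\CV_E\cap(\lin\{c-c'\})^\perp\neq\R^n$ and $d_{BM}(\pi'(K),\B^n\cap U')=d_{BM}(K,\B^n)$. Applying Corollary~\ref{cor:dist} to $\pi'(K)$ inside $U'$ already gives the bound $\dim U'\le n-1$, with no further induction needed.

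For $n=2$, every convex body satisfies $d_{BM}(K,\B^2)>1$ unless $K$ is an ellipse. An ellipse has the unique distance pair $E=F=K$, because an ellipse contained in $K$ and homothetic with ratio $1$ to an ellipse containing $K$ must equal $K$. In every other case $d_{BM}(K,\B^2)>1=n-1$, and the first part applies.
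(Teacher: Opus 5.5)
Your proof is correct and follows the paper's argument: apply Theorem~\ref{thm:maurey}, then Corollary~\ref{cor:dist} inside $U$, which forces $\dim U>n-1$, so $U=\R^n$. You do not need to look inside the inductive construction, because the statement of Theorem~\ref{thm:maurey} already suffices: if $U=\R^n$, then $\pi$ is an idempotent surjection of $\R^n$ and hence the identity, so the theorem's uniqueness assertion applies to $K$ itself (this is exactly the paper's argument). Your explicit treatment of the ellipse case for $n=2$ fills in a small step the paper leaves implicit.
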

\begin{proof}
Let us take the subspace $U$ and the projection $\pi: \R^n \to U$ like in Theorem~\ref{thm:maurey}.
Let $1 \leq k \leq n$ be the dimension of $U$.
Together with Corollary~\ref{cor:dist}, we obtain
\[
	k
	\geq d_{BM}(\pi(K), \B^n \cap U)
	= d_{BM}(K,\B^n)
	> n-1,
\]
which implies $k=n$.
It follows that $U = \R^n$ and $\pi$ is the identity.
Hence, the pair of distance ellipsoids for $K$ is unique.
\end{proof}

\section*{Acknowledgements}

The research cooperation was funded by the program Excellence Initiative -- Research University at the Jagiellonian University in Krak\'{o}w.

\end{document}